%
\documentclass[12pt, reqno]{amsart}
\usepackage{amsmath, amsthm, amscd, amsfonts, amssymb, graphicx, color, mathrsfs}
\usepackage[bookmarksnumbered, colorlinks, plainpages]{hyperref}
\usepackage[all]{xy}
\usepackage{slashed}
\usepackage{tipa}
\usepackage{mathabx}
\usepackage{soul}
\usepackage{cancel}
\usepackage{ulem}
\usepackage{inputenc}
\usepackage{booktabs}

\textheight 22.5truecm \textwidth 14.5truecm
\setlength{\oddsidemargin}{0.35in}\setlength{\evensidemargin}{0.35in}

\setlength{\topmargin}{-.5cm}

\newtheorem{theorem}{Theorem}[section]
\newtheorem{lemma}[theorem]{Lemma}

\newtheorem{proposition}[theorem]{Proposition}
\newtheorem{corollary}[theorem]{Corollary}
\theoremstyle{definition}
\newtheorem{definition}[theorem]{Definition}

\theoremstyle{remark}
\newtheorem{remark}[theorem]{Remark}
\numberwithin{equation}{section}

\begin{document}
\setcounter{page}{1}

\title[ Drift diffusion equations   on compact Lie groups ]{ Drift diffusion equations with fractional diffusion   on compact Lie groups}

\author[D. Cardona]{Duv\'an Cardona}
\address{
  Duv\'an Cardona:
  \endgraf
  Department of Mathematics: Analysis, Logic and Discrete Mathematics
  \endgraf
  Ghent University, Belgium
  \endgraf
  {\it E-mail address} {\rm duvanc306@gmail.com, duvan.cardonasanchez@ugent.be}
  }
  
  \author[J. Delgado]{Julio Delgado}
\address{
  Julio Delgado:
  \endgraf
  Departmento de Matem\'aticas
  \endgraf
  Universidad del Valle
  \endgraf
  Cali-Colombia
    \endgraf
    {\it E-mail address} {\rm delgado.julio@correounivalle.edu.co}
  }

\author[M. Ruzhansky]{Michael Ruzhansky}
\address{
  Michael Ruzhansky:
  \endgraf
  Department of Mathematics: Analysis, Logic and Discrete Mathematics
  \endgraf
  Ghent University, Belgium
  \endgraf
 and
  \endgraf
  School of Mathematical Sciences
  \endgraf
  Queen Mary University of London
  \endgraf
  United Kingdom
  \endgraf
  {\it E-mail address} {\rm michael.ruzhansky@ugent.be, m.ruzhansky@qmul.ac.uk}
  }
\subjclass[2020]{Primary 22E30, 35S10; Secondary 35R11, 58J40.}

\keywords{Diffusion equations, quasi-geostrophic model, diffusion with drift, pseudo-differential operators}

\thanks{The authors are supported  by the FWO  Odysseus  1  grant  G.0H94.18N:  Analysis  and  Partial Differential Equations and by the Methusalem programme of the Ghent University Special Research Fund (BOF)
(Grant number 01M01021). J. Delgado is also supported by Vice. Inv. Universidad del Valle Grant CI-71281,  MathAmSud and Minciencias-Colombia under the project MATHAMSUD 21-MATH-03.  Michael Ruzhansky is also supported  by EPSRC grant 
EP/R003025/2.
}

\begin{abstract}
In this work we investigate the well-posedness for  difussion equations associated to subelliptic pseudo-differential operators on compact Lie groups. The diffusion by  strongly elliptic operators is considered as a special case and in particular the fractional diffusion with respect to the Laplacian.  The  general case is studied within the H\"ormander classes associated to a sub-Riemannian structure on the group (encoded by a  H\"ormander system of vector fields). Applications to diffusion equations for fractional sub-Laplacians, fractional powers of more general subelliptic operators, and the corresponding quasi-geostrophic model with drift $D$ are investigated. Examples on $\textnormal{SU}(2)$ for diffusion problems with fractional diffusion are analysed.   
\end{abstract} 
\maketitle
\allowdisplaybreaks
\tableofcontents

\section{Introduction}

In this manuscript we establish the well-posedness of the Cauchy problem 
\begin{equation}\label{PVI:Intro}(\textnormal{IVP}): \begin{cases}\frac{\partial v}{\partial t}-K(t)v=f ,& \text{ }v\in \mathscr{D}'((0,T)\times G),
\\v(0)=u_0 ,& \text{ } \end{cases}
\end{equation} 
on a compact Lie group $G,$ where the the time-dependent family of pseudo-differential operators $K(t)$ includes subelliptic operators, fractional Laplacians and sub-Laplacians with or without drifts (extra perturbations like smooth vector fields).  The motivation for  the consideration of drifts came from quasi-geostrophic theory, see  \cite{caf:annals,knt:geo} and references therein for details.

The investigation of fractional diffusion equations has been intensively active in the last decades,
in particular motivated by the diverse applications e.g fractional dynamics \cite{skb:kk}, \cite{nl:aw},  fluid mechanics \cite{mb:lf}, \cite{pc:lf} and mathematical finance  \cite{c:ff}.  Moreover, the paper  \cite{caf:annals} by Caffarelli and Vasseur has considered the analysis of drift diffusion equations with fractional diffusion inspired by the applications of such models in quasi-geostrophic theory and control theory, see Constantin \cite{pc:lf} and Duvaut and Lions \cite{DuvautLions}, respectively. In the Euclidean setting, analysis of the fractional Laplacian in terms of the pseudo-differential operators theory and singular integrals can be found e.g. in G. Grubb \cite{GGr:aq} (and the references therein). On compact manifolds, in particular, compact Lie groups a theory of initial value problems for pseudo-differential operators can be found in the  book of M. Taylor \cite{Taylorbook1981} and in Chazarain and  Piriou \cite{chpi:book}. 
 
On the other hand, the research on the fractional Laplacian and the fractional diffusion on different group structures have been of recent interest, for instance  the discrete fractional diffusion is considered in  \cite{ron:ddf}. The fractional Laplacian on the torus has been studied in  \cite{ron:t} and several aspects of the harmonic analysis of the fractional Laplacian on lattices has been investigated in \cite{ron:di}. Other recent works on the fractional Laplacian and its different generalisations have been considered  in \cite{Uhlm:fl},   \cite{dab:geo}, \cite{del:deghyp1} \cite{del:depar1}, \cite{val:p1}, \cite{val:p2}, \cite{eps:aw}, \cite{rup:f1}, \cite{GGr:aq}, \cite{ppop:fl},  \cite{val:p3} and the references therein. In particular, the counterpart of the results proved here for compact Lie groups have been analysed on graded Lie groups in \cite{diffgLie:p1} in the case of hypoelliptic problems for Rockland operators.  A nice exposition on the fractional Sobolev spaces and the  fractional Laplacian can be found in \cite{val:p1}. On the other hand, drift diffusion equations with fractional diffusion  have intensely attracted the interest in the last 10 years starting with the works of  Caffarelli and Vasseur in \cite{caf:annals}.

Now, we are going to explain in more detail, the family of initial value problems \eqref{PVI:Intro} covered by our approach. For a compact Lie group  $G,$ the set  $\widehat{G}$ denotes its unitary dual. Motivated by the applications of the previous paragraph and by the recent techniques in the analysis of differential problems on compact Lie groups \cite{CardonaRuzhanskyC,Ruz}, built on the harmonic analysis of operators on the phase space $G\times\widehat{G},$ in this paper we analyse the existence and uniqueness for the Cauchy problem
\eqref{PVI:Intro}   associated to a  time-dependent  continuous family of pseudo-differential operators $K(t)$ in the H\"ormander classes $\Psi^{m,\mathcal{L}}_{\rho,\delta}(G\times \widehat{G})$ (introduced in \cite{CardonaRuzhanskyC} and that include e.g. the H\"ormander sub-Laplacian $\mathcal{L}=-\sum_{j=1}^kX_j^2$ and its functional calculus, its fractional powers, etc.) with positive order $m>0$. The functional calculus of the pseudo-differential operators in  \cite{CardonaRuzhanskyC,RuzhanskyWirth2014} allows us to consider the case of the fractional diffusion and of the fractional diffusion with drift. Our analysis includes the case where
\begin{itemize}
    \item $K(t)$ is strongly elliptic with respect to $\mathcal{L},$ of  arbitrary order $m>0.$ Examples are fractional operators of the form $K(t)=a(x,t)\mathcal{L}^{\frac{m}{2}},$ $a(x,t)(1+\mathcal{L})^{\frac{m}{2}},$ where $a\in C([0,T], C^{\infty}(G))$ is a real-valued  function such that $a(x,t)\neq 0,$ for all $(x,t)\in G\times [0,T].$ Our analysis also covers the case where $K(t)=a(x,t)\mathcal{L}^{\frac{m}{2}}_G$ with $\mathcal{L}_G=-\sum_{j=1}^{k}X_j^2$  being the Laplace-Beltrami operator on $G.$ 
    \item $K(t)=a(x,t)\mathcal{L}_{G}^{\frac{m}{2}}+D,$ where $0\leq m\leq 1,$ and the drift $D=\sum_{i=1}^{n}a_{i}X_i\in \textnormal{Diff}^{1}(G)$  is a left-invariant differential operator of first order such that the matrix-valued  symbol of $$-K(t)=-(a(x,t)\mathcal{L}_{G}^{\frac{m}{2}}+D)$$ is positive on every representation space. Moreover, the positivity condition is removed when $m>1,$ and $a(x,t)\neq 0$ for all $(x,t)\in G\times [0,T].$ In both  cases we say that the  diffusion problem \eqref{PVI} has drift $D$ following the terminology e.g. in \cite{caf:annals}.
    \end{itemize}
In many of the cases, the study of the Cauchy problem
\begin{equation}\label{Cauchy:on:M}
    Lu=f,\,\partial_{t}^{j}u(x,0)=g_{j+1},\,0\leq j\leq m-1,
\end{equation}where $ L=\partial_t^m-\sum_{j=0}^{m-1}a_{m-j}(x,t,D)\partial_{t}^j,$ 
with any $a_{m-j}(x,t,D)$ being a differential operator of order $m-j$ on a closed  manifold $M$ (or on $\mathbb{R}^n$) can be reduced to an equivalent first order system of the form
\begin{equation}\label{simplified} \begin{cases}\frac{\partial u}{\partial t}-K(t)u=f ,& \text{ }u\in \mathscr{D}'((0,T)\times M),
\\u(0)=u_0 .& \text{ } \end{cases}
\end{equation}Indeed, if $\Delta_M$ denotes the positive Laplacian on $M,$ and $\Gamma=(1+\Delta_M)^{\frac{1}{2}}$ is the corresponding Bessel potential, setting
\begin{equation}
    u_j:=\partial_t^{j-1}\Gamma^{m-j}u,\,\,1\leq j\leq m,
\end{equation}then \eqref{Cauchy:on:M} is equivalent to the system
\begin{equation}\label{matrix:Rep}
 \partial_t
 \begin{pmatrix} 
 u_1&\\
 u_2&\\
 \vdots&\\
 u_{m-1}&\\
 u_{m}
 \end{pmatrix}= \begin{pmatrix}
0 & \Gamma & 0 &\cdots & 0\\
0 & 0 & \Gamma &\cdots & 0\\
\vdots  &\vdots &\vdots & \ddots & \vdots\\
0  & 0 & 0 & \ddots & \Gamma\\
b_{1}(x,t,D)  & b_{2}(x,t,D) & \cdots & \cdots & b_{m}(x,t,D)\\
\end{pmatrix} \begin{pmatrix} 0&\\
0&\\
 \vdots&\\0&\\ \,f\end{pmatrix},  
\end{equation}where the sequence $b_{m}(x,t,D)=a_{m-j+1}(x,t,D)\Gamma^{j-m}$ of pseudo-differential operators  are of order one, with the functions $u_j$ having initial conditions $u_j|_{t=0}=\Gamma^{m-j}g_j$ at $t=0.$ Changing the notation and denoting the function vector $$(u_1,\cdots ,u_m)^{t}$$ by $u$ and the matrix in \eqref{matrix:Rep} by $K(t)$ we have the equivalence of \eqref{matrix:Rep}, \eqref{simplified} and the initial higher order initial value problem \eqref{Cauchy:on:M}. For a complete analysis of this equivalence  in the setting of parabolic and hyperbolic problems we refer the reader to the book of M. Taylor \cite[Chapters IV-V]{Taylorbook1981}.

The matrix $K(t)$ in \eqref{simplified} has as entries pseudo-differential operators or order one and it illustrates the relevance of the analysis of the initial value problem \eqref{PVI:Intro}, even in the case of a compact Lie group $M=G$. Our choice of the setting of compact Lie groups is motivated by exploiting the group Fourier transform, and the global notion of a matrix-valued symbol that allows the consideration of a variety of pseudo-differential operators $K(t),$ \cite{Ruz}, allowing a coordinates-free approach. Of particular interest for this work, is the case when $K(t)$ becomes (a parametrised in $t\in [0,T]$) family of subelliptic pseudo-differential operators \cite{CardonaRuzhanskyC}. 

The main results of this work are Theorem \ref{Main:Th} in Subsection \ref{subsectio:case:1} and Theorem \ref{Main:Th:2} in Subsection \ref{subsectio:case:1} where we prove that
\begin{itemize}
    \item The Cauchy problem \ref{PVI:Intro} is well posedness for any continuous family of strongly elliptic pseudo-differential operators $K(t)$ in the subelliptic H\"ormander classes  in the H\"ormander classes $\Psi^{m,\mathcal{L}}_{\rho,\delta}(G\times \widehat{G})$ with the solution operator
    \begin{equation}\label{Solution:mapping}
        (v_0,f)\in H^{s,\mathcal{L}}(G)\times L^2([0,T],H^{s,\mathcal{L}}(G))\mapsto u\in L^\infty([0,T],H^{s,\mathcal{L}}(G)),
    \end{equation}being a continuous mapping for any $s\in \mathbb{R}.$ The continuity of the operator is proved with the corresponding energy estimates and the Sobolev spaces $H^{s,\mathcal{L}}(G)$ are then adapted to the regularity scale of the sub-Laplacian $\Vert w\Vert_{H^{s,\mathcal{L}}(G)}:=\Vert (1+\mathcal{L})^{\frac{s}{2}}\Vert_{L^2(G)}.$
    \item The strongly elliptic condition in the continuity of \eqref{Solution:mapping} can be relaxed assuming the positivity of the matrix-valued symbol of the operator $-K(t),$ when the order $m$ is assumed to be in the interval $[0,\rho/\kappa-(2-1/\kappa)\delta].$ The upper bound of this interval, or the maximum order of tolerance for the order, is related with the sharp-G\r{a}rding inequality for the classes $\Psi^{m,\mathcal{L}}_{\rho,\delta}(G\times \widehat{G})$ (see \cite{SharpGarding}). 
    
    In the case where the Laplacian takes the place of the sub-Laplacian, we recover the H\"ormander classes $ \Psi^{m}_{\rho,\delta}(G\times \widehat{G})$ in \cite{Ruz} (and then, the H\"ormander classes defined by localisations \cite{Hormander1985III})  we  allow the order
    $  0\leq m\leq \rho-\delta ,$ and even the criterion in Theorem  \ref{Main:Th:2} for a type of  operators of the form $K(t)=a(t,x)(\mathcal{L}_G)^{\frac{1}{2}}+D,$ $D\in TG^{\mathbb{C}}=TG\otimes i TG,$ (see Corollary \ref{Corollary:general:drift} for details) provides new results beyond  the known criteria given e.g. in  Taylor \cite[Chapters IV-V]{Taylorbook1981}. 
    In that case, the well posedness for the (quasi-geostrophic) differential problem
    $$  v_t= a(t,x)(\mathcal{L}_G)^{\frac{1}{2}}v+\sum_{j=1}^{n}a_{j}(t,x)iX_{j}u,\,v_{0}=u, $$
    is analysed, taking care of the drift $D=\sum_{j=1}^{n}a_{j}(t,x)iX_{j}\in TG^{\mathbb{C}}=TG\otimes i TG.$

    A concrete example in this direction is given in Subsection \ref{Ex:SU2section} in the case of $\textnormal{SU}(2),$ showing  the viability of the positivity criterion of the symbol of the operator $-K(t)$ in   Theorem \ref{Main:Th:2}.
    \item Our criteria for the strongly elliptic case where $K(t)\in \Psi^{m}_{\rho,\delta}(G\times \widehat{G})$ recovers the well posedness result for hyperbolic problems in Taylor \cite{Taylorbook1981} in the case of the fractional Laplacian on $G$.
\end{itemize}
This paper is organised as follows. 
\begin{itemize}
    \item In Section  \ref{Sect2} we provide the preliminaries about the Fourier analysis on compact Lie groups  and the related topics about the subelliptic H\"ormander classes $\Psi^{m}_{\rho,\delta}(G\times \widehat{G}).$
    \item Finally, Section  \ref{GST} is dedicated to the proof of our main results and to presenting several consequences, finalising the section with the investigation of fractional diffusion models (with drift) on $G=\textnormal{SU}(2).$ 
\end{itemize}

\section{Fourier analysis on Lie groups and subelliptic H\"ormander classes}\label{Sect2}

\subsection{Pseudo-differential operators via localisations}\label{S2.1}
 Pseudo-differential operators on compact manifolds, and consequently on compact Lie groups, can be defined by using local coordinate charts (see H\"ormander \cite{Hormander1985III} and also M. Taylor \cite{Taylorbook1981} for a good introductory background on the subject). Although here we will use the matrix-symbols introduced in \cite{Ruz}, as we will see later there is a global description of the H\"ormander classes in \cite{Hormander1985III} in terms of such a global notion of matrix-symbols.
 
Let us briefly  introduce these  classes starting with the definition in the Euclidean setting. Let $U$ be an open  subset of $\mathbb{R}^n.$ We  say that  the ``symbol" $a\in C^{\infty}(U\times \mathbb{R}^n, \mathbb{C})$ belongs to the H\"ormander class of order $m$ and of $(\rho,\delta)$-type,  $S^m_{\rho,\delta}(U\times \mathbb{R}^n),$ $0\leqslant \rho,\delta\leqslant 1,$ if for every compact subset $K\subset U$ and for all $\alpha,\beta\in \mathbb{N}_0^n$, the symbol inequalities
\begin{equation*}
  |\partial_{x}^\beta\partial_{\xi}^\alpha a(x,\xi)|\leqslant C_{\alpha,\beta,K}(1+|\xi|)^{m-\rho|\alpha|+\delta|\beta|},
\end{equation*} hold true uniformly in $x\in K$ for all  $\xi\in \mathbb{R}^n.$ Then, a continuous linear operator $A:C^\infty_0(U) \rightarrow C^\infty(U)$ 
is a pseudo-differential operator of order $m$ of  $(\rho,\delta)$-type, if there exists
a symbol $a\in S^m_{\rho,\delta}(U\times \mathbb{R}^n)$ such that
\begin{equation*}
    Af(x)=\smallint\limits_{\mathbb{R}^n}e^{2\pi i x\cdot \xi}a(x,\xi)(\mathscr{F}_{\mathbb{R}^n}{f})(\xi)d\xi,
\end{equation*} for all $f\in C^\infty_0(U),$ where
$$
    (\mathscr{F}_{\mathbb{R}^n}{f})(\xi):=\smallint\limits_Ue^{-i2\pi x\cdot \xi}f(x)dx
$$ is the  Euclidean Fourier transform of $f$ at $\xi\in \mathbb{R}^n.$ 

Once the definition of H\"ormander classes on open subsets of $\mathbb{R}^n$ is established, it can be extended to smooth manifolds as follows.  Given a $C^\infty$-manifold $M,$ a linear continuous operator $A:C^\infty_0(M)\rightarrow C^\infty(M) $ is a pseudo-differential operator of order $m$ of $(\rho,\delta)$-type, with $ \rho\geqslant   1-\delta, $ and $0\leq \delta<\rho\leq 1,$  if for every local  coordinate patch $\omega: M_{\omega}\subset M\rightarrow U_{\omega}\subset \mathbb{R}^n,$
and for every $\phi,\psi\in C^\infty_0(U_\omega),$ the operator
\begin{equation*}
    Tu:=\psi(\omega^{-1})^*A\omega^{*}(\phi u),\,\,u\in C^\infty(U_\omega),\footnote{As usually, $\omega^{*}$ and $(\omega^{-1})^*$ are the pullbacks, induced by the maps $\omega$ and $\omega^{-1},$ respectively.}
\end{equation*} is a standard pseudo-differential operator with symbol $a_T\in S^m_{\rho,\delta}(U_\omega\times \mathbb{R}^n).$ In this case we write $A\in \Psi^m_{\rho,\delta}(M,\textnormal{loc}).$

\subsection{The positive sub-Laplacian and pseudo-differential operators via global symbols} Let $G$ be a compact Lie group with Lie algebra $\mathfrak{g}\simeq T_{e_G}G$, where $e_{G}$ is the neutral element of $G$, and let  
$$X=\{X_1,\cdots,X_{k} \}\subset \mathfrak{g}$$ 
be a system of $C^\infty$-vector fields. For all $I=(i_1,\cdots,i_\omega)\in \{1,2,\cdots,k\}^{\omega}$ of length $\omega\geqslant   1$, we denote by $$X_{I}:=[X_{i_1},[X_{i_2},\cdots [X_{i_{\omega-1}},X_{i_\omega}]\cdots]]$$
a commutator of length $\omega$, where $X_{I}:=X_{i}$ when $\omega=1$ and $I=(i)$. The system $X$ is said to satisfy H\"ormander's condition of step (or order) $\kappa$ if $\mathfrak{g}=\mathrm{span}\{X_I: |I|\leq \kappa\}$, that is, in other words, the vector fields $X_j$, $j=1,\ldots, k$, together with their commutator up to length $\kappa$, generate the whole Lie algebra $\mathfrak{g}$. 

Note that we are assuming that there is no subsystem $Y=\{Y_1,\cdots,Y_{\ell} \}\subset X$, $\ell< k$, of smooth vector fields such that $\mathfrak{g}=\mathrm{span}\{Y_I: |I|\leq \kappa\}$.
In this case we call $X$ a system of H\"ormander's vector fields.

Given a system $X=\{X_1,\cdots,X_{k}\}$ of H\"ormander's vector fields, then the operator defined as
\begin{equation*}
    \mathcal{L}\equiv \mathcal{L}_{X}:=-(X_{1}^2+\cdots +X_{k}^2),
\end{equation*} 
is a hypoelliptic operator by H\"ormander theorem on sums of the  squares of vector fields (see H\"ormander \cite{Hormander1967}). In particular the operator $\mathcal{L}$ is also subelliptic, and it is called the subelliptic Laplacian associated with the system $X$, or simply sub-Laplacian. It is clear from the definition that one can define different sub-Laplacians by using  different systems of  H\"ormander's vector fields (and that satisfy H\"ormander condition of different step). 

As already mentioned in the introduction, we will make use of the quantization process developed by the third author and V. Turunen in \cite{Ruz}. We briefly recall below how this global quantization is defined. 

Let  $A$ be a continuous linear operator from $C^\infty(G)$ into $\mathscr{D}'(G),$ and let  $\widehat{G}$ be  the algebraic unitary dual of $G.$ Then, there exists a function \begin{equation}\label{symbol}a:G\times \widehat{G}\rightarrow \cup_{\ell\in \mathbb{N}} \mathbb{C}^{\ell\times \ell},\end{equation}  that we call the symbol of $A,$ such that $a(x,\xi):=a(x,[\xi])\in \mathbb{C}^{d_\xi\times d_\xi}$ for every equivalence class $[\xi]\in \widehat{G},$ where $\xi:G\rightarrow \textnormal{Hom}(H_{\xi}),$ $H_{\xi}\cong \mathbb{C}^{d_\xi},$ and such that
\begin{equation}\label{RuzhanskyTurunenQuanti}
    Af(x)=\sum_{[\xi]\in \widehat{G}}d_\xi{\textnormal{Tr}}[\xi(x)a(x,\xi)\widehat{f}(\xi)],\,\,\forall f\in C^\infty(G).
\end{equation}Note that we have denoted by
\begin{equation*}
    \widehat{f}(\xi)\equiv (\mathscr{F}f)(\xi):=\smallint\limits_{G}f(x)\xi(x)^*dx\in  \mathbb{C}^{d_\xi\times d_\xi},\,\,\,[\xi]\in \widehat{G},
\end{equation*} the Fourier transform of $f$ at $\xi\cong(\xi_{ij})_{i,j=1}^{d_\xi},$ where the matrix representation of $\xi$ is induced by an orthonormal basis of the representation space $H_{\xi}.$
The function $a$ in \eqref{symbol} satisfying \eqref{RuzhanskyTurunenQuanti} is unique, and satisfies the identity
\begin{equation*}
    a(x,\xi)=\xi(x)^{*}(A\xi)(x),\,\, A\xi:=(A\xi_{ij})_{i,j=1}^{d_\xi},\,\,\,[\xi]\in \widehat{G}.
\end{equation*}
Note that the previous identity is well defined. Indeed, it is well known that the functions $\xi_{ij}$, which are of $C^\infty$-class, are the eigenfunctions of the positive Laplace operator $\mathcal{L}_G$, that is $\mathcal{L}_G\xi_{ij}=\lambda_{[\xi]}\xi_{ij}$ for some non-negative real number $\lambda_{[\xi]}\geq 0$
depending  only of the equivalence class $[\xi]$ and not on the representation $\xi$.

In general, we refer to the function $a$ as the (global or full) {\it{symbol}} of the operator $A,$ and we will use the notation $A=\textnormal{Op}(a)$ to indicate that $a:=\sigma_A$ is the symbol associated with the operator $A.$

In order to classify symbols in the H\"ormander classes, in  \cite{Ruz} the authors defined the notion of {\it{difference operators}}, which endows $\widehat{G}$ with a difference structure.  Following  \cite{RuzhanskyWirth2015}, a difference operator $Q_\xi: \mathscr{D}'(\widehat{G})\rightarrow \mathscr{D}'(\widehat{G})$ of order $k$  is defined as
\begin{equation}\label{taylordifferences}
    Q_\xi\widehat{f}(\xi)=\widehat{qf}(\xi),\,[\xi]\in \widehat{G},
\end{equation}  for some function $q$ vanishing of order $k$ at the neutral element $e=e_G.$ We will denote by $\textnormal{diff}^k(\widehat{G})$  the class of all difference operators of order $k.$ For a  fixed smooth function $q,$ the associated difference operator will be denoted by $\Delta_q\equiv Q_\xi.$ A system  of difference operators (see e.g. \cite{RuzhanskyWirth2015})
\begin{equation*}
  \Delta_{\xi}^\alpha:=\Delta_{q_{(1)}}^{\alpha_1}\cdots   \Delta_{q_{(i)}}^{\alpha_{i}},\,\,\alpha=(\alpha_j)_{1\leqslant j\leqslant i}, 
\end{equation*}
with $i\geq n$, is called   admissible  if
\begin{equation*}
    \textnormal{rank}\{\nabla q_{(j)}(e):1\leqslant j\leqslant i \}=\textnormal{dim}(G), \textnormal{   and   }\Delta_{q_{(j)}}\in \textnormal{diff}^{1}(\widehat{G}).
\end{equation*}
An admissible collection is said to be strongly admissible if, additionally, 
\begin{equation*}
    \bigcap_{j=1}^{i}\{x\in G: q_{(j)}(x)=0\}=\{e_G\}.
\end{equation*}

\begin{remark}\label{remarkD}
Matrix components of unitary representations induce difference operators. Indeed, if $\xi_{1},\xi_2,\cdots, \xi_{k},$ are  fixed irreducible and unitary  representation of $G$, which not necessarily belong to the same equivalence class, then each coefficient of the matrix
\begin{equation}
 \xi_{\ell}(g)-I_{d_{\xi_{\ell}}}=[\xi_{\ell}(g)_{ij}-\delta_{ij}]_{i,j=1}^{d_{\xi_\ell}},\, \quad g\in G, \,\,1\leq \ell\leq k,
\end{equation} 
that is each function 
$q^{\ell}_{ij}(g):=\xi_{\ell}(g)_{ij}-\delta_{ij}$, $ g\in G,$ defines a difference operator
\begin{equation}\label{Difference:op:rep}
    \mathbb{D}_{\xi_\ell,i,j}:=\mathscr{F}(\xi_{\ell}(g)_{ij}-\delta_{ij})\mathscr{F}^{-1}.
\end{equation}
We can fix $k\geq \mathrm{dim}(G)$ of these representations in such a way that the corresponding  family of difference operators is admissible, that is, 
\begin{equation*}
    \textnormal{rank}\{\nabla q^{\ell}_{i,j}(e):1\leqslant \ell\leqslant k \}=\textnormal{dim}(G).
\end{equation*}
To define higher order difference operators of this kind, let us fix a unitary irreducible representation $\xi_\ell$.
Since the representation is fixed we omit the index $\ell$ of the representations $\xi_\ell$ in the notation that will follow.
Then, for any given multi-index $\alpha\in \mathbb{N}_0^{d_{\xi_\ell}^2}$, with 
$|\alpha|=\sum_{i,j=1}^{d_{\xi_\ell}}\alpha_{i,j}$, we write
$$\mathbb{D}^{\alpha}:=\mathbb{D}_{1,1}^{\alpha_{11}}\cdots \mathbb{D}^{\alpha_{d_{\xi_\ell},d_{\xi_\ell}}}_{d_{\xi_\ell}d_{\xi_\ell}}
$$ 
for a difference operator of order $|\alpha|$.
\end{remark}
The difference operators endow the unitary dual $\widehat{G}$ with a difference structure. For difference operators of the previous form, the  following finite Leibniz-like formula holds true (see  \cite{RuzhanskyTurunenWirth2014} for details). Note that below we are still assuming that the representation $\xi_\ell$ is fixed.
\begin{proposition}[Leibniz rule for difference operators]\label{Leibnizrule} Let $G$ be a compact Lie group and let $\mathbb{D}^{\alpha},$ $\alpha\in \mathbb{N}^{d_{\xi_\ell}}_0,$ be the family of difference operators defined in  \eqref{Difference:op:rep}. Then, the following Leibniz rule
\begin{align*}
   ( \mathbb{D}^{\alpha}(a_{1}a_{2}))(x_0,\xi)=\sum_{ |\gamma|,|\varepsilon|\leqslant |\alpha|\leqslant |\gamma|+|\varepsilon| }C_{\varepsilon,\gamma}(\mathbb{D}^{\gamma}a_{1})(x_0,\xi) (\mathbb{D}^{\varepsilon}a_{2})(x_0,\xi), \quad x_{0}\in G,
\end{align*}holds  for all $a_{1},a_{2}\in C^{\infty}(G)\times \mathscr{S}'(\widehat{G})$, where the summation is taken over all $\varepsilon, \gamma$ such that $|\varepsilon|,|\delta|\leq |\alpha|\leq |\gamma|+|\varepsilon|$. 
\end{proposition}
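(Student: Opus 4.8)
The plan is to prove the Leibniz rule for the difference operators $\mathbb{D}^{\alpha}$ associated to a fixed unitary irreducible representation $\xi_\ell$ by reducing the multi-index case to the case of first-order difference operators $\mathbb{D}_{i,j}$ and then iterating. First I would recall that $\mathbb{D}_{i,j}$ is, by definition \eqref{Difference:op:rep}, conjugation by the Fourier transform of the multiplication operator by $q_{ij}(g)=\xi_\ell(g)_{ij}-\delta_{ij}$; hence on the Fourier side $\mathbb{D}_{i,j}$ acts as multiplication by $q_{ij}$ on the group side. So the whole identity is really a statement about how multiplication by the smooth functions $q_{ij}$ interacts with the (non-commutative) convolution product that corresponds to the pointwise product of symbols in the $x$-variable; equivalently, after transporting everything to $G$, it is the elementary identity expressing $\prod_{i,j}(\xi_\ell(g)_{ij}-\delta_{ij})^{\alpha_{ij}}$ evaluated on a product.

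The key computational step is the single-difference identity: for a first-order difference operator $\mathbb{D}=\mathbb{D}_{i,j}$ coming from $q(g)=\xi_\ell(g)_{ij}-\delta_{ij}$, one has
\begin{equation*}
\mathbb{D}(a_1 a_2) = (\mathbb{D} a_1)\, a_2 + a_1\,(\mathbb{D} a_2) + \sum_{p} (\mathbb{D}_{i,p} a_1)(\mathbb{D}_{p,j} a_2),
\end{equation*}
which follows directly from the matrix-coefficient identity $\xi_\ell(gh)_{ij}-\delta_{ij} = \sum_{p}\xi_\ell(g)_{ip}\xi_\ell(h)_{pj}-\delta_{ij}$; writing $\xi_\ell(g)_{ip}=q_{ip}(g)+\delta_{ip}$ and $\xi_\ell(h)_{pj}=q_{pj}(h)+\delta_{pj}$ and expanding gives exactly $q_{ij}(g)+q_{ij}(h)+\sum_p q_{ip}(g)q_{pj}(h)$. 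Translating this back through $\mathscr{F}$ and $\mathscr{F}^{-1}$ (using that the symbol product corresponds to a convolution-type product on $G\times\widehat{G}$, cf.\ \cite{Ruz}) yields the displayed three-term Leibniz rule, with the crucial feature that every term on the right is again a product of difference operators of this same family applied to $a_1$ and $a_2$, and the orders add up correctly: $\mathbb{D} a_1$ has order $1$ and $a_2$ order $0$ (so total $\geq 1 = |\alpha|$), while in the cross term each factor has order $1$ (total $2 \geq 1$), and in all cases each factor has order $\le 1 = |\alpha|$.

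Next I would iterate: writing $\mathbb{D}^\alpha = \mathbb{D}_{1,1}^{\alpha_{11}}\cdots$ as a composition of $|\alpha|$ first-order difference operators and applying the single-difference identity repeatedly, each application splits a term into finitely many terms, each of which is a product $(\mathbb{D}^\gamma a_1)(\mathbb{D}^\varepsilon a_2)$ for some multi-indices $\gamma,\varepsilon$. An easy induction on $|\alpha|$ tracks the order bookkeeping: at each step the total order $|\gamma|+|\varepsilon|$ either stays the same or increases by $1$ (from a cross term), and neither $|\gamma|$ nor $|\varepsilon|$ individually exceeds the number of difference operators applied so far; this gives the constraint $|\gamma|,|\varepsilon|\le|\alpha|\le|\gamma|+|\varepsilon|$ claimed in the statement, and the constants $C_{\varepsilon,\gamma}$ are the (finitely many, $\alpha$-dependent) non-negative integers produced by the expansion. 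Finally I would note that the identity is first verified for $a_1,a_2$ finite linear combinations of matrix coefficients (where everything is a finite sum and all manipulations are legitimate) and then extended to all $a_1,a_2\in C^\infty(G)\times\mathscr{S}'(\widehat{G})$ by density/continuity, since both sides are continuous in the relevant topologies.

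The main obstacle is purely notational bookkeeping rather than conceptual: one must be careful that the ``cross'' terms $\sum_p(\mathbb{D}_{i,p}a_1)(\mathbb{D}_{p,j}a_2)$ involve difference operators $\mathbb{D}_{i,p},\mathbb{D}_{p,j}$ that are not literally among the generators $\mathbb{D}_{i,j}$ appearing in $\mathbb{D}^\alpha$ but are still first-order difference operators of the same type (coming from matrix coefficients of the same fixed $\xi_\ell$), so the induction closes within the class $\textnormal{diff}^{\bullet}(\widehat{G})$; and one must verify the order inequalities hold in the stated symmetric form. I would point the reader to \cite{RuzhanskyTurunenWirth2014} for the detailed expansion, since the argument there is the same.
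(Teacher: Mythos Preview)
Your argument is correct and is precisely the standard proof of this Leibniz rule: the single-step identity
\[
q_{ij}(gh)=q_{ij}(g)+q_{ij}(h)+\sum_{p}q_{ip}(g)q_{pj}(h),
\]
obtained from $\xi_\ell(gh)=\xi_\ell(g)\xi_\ell(h)$, gives the three-term Leibniz rule for a first-order $\mathbb{D}_{i,j}$, and iteration with the obvious order bookkeeping yields the general case. Note, however, that the paper does not supply its own proof of Proposition~\ref{Leibnizrule}; it merely states the result and refers the reader to \cite{RuzhanskyTurunenWirth2014} for details. Your sketch is exactly the argument carried out in that reference, so there is nothing to compare: you have reproduced the cited proof.
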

Note that for a different kind of difference operators, namely for those given by compositions of difference operators of higher order associated with different representations, a Leibniz-like formula still holds true by iteration. For more details about difference operators and Leibniz-like formulas for admissible collections see also Corollary 5.13 in \cite{Fischer2015}.
\medskip

We are now going to introduce the global H\"ormander classes of symbols defined in \cite{Ruz}.
First let us recall that every left-invariant vector field  $Y\in\mathfrak{g}$ can be identified with the first order  differential operator $\partial_{Y}:C^\infty(G)\rightarrow \mathscr{D}'(G)$  given by
 \begin{equation*}
   \partial_{Y}f(x)=  (Y_{x}f)(x)=\frac{d}{dt}f(x\exp(tY) )|_{t=0}.
 \end{equation*}If $\{X_1,\cdots, X_n\}$ is a basis of the Lie algebra $\mathfrak{g},$ then we will use the standard multi-index notation
 $$ \partial_{X}^{\alpha}=X_{x}^{\alpha}=\partial_{X_1}^{\alpha_1}\cdots \partial_{X_n}^{\alpha_n},     $$
 for a canonical left-invariant differential operator of order $|\alpha|.$

By using this property, together with the following notation for the so-called  elliptic weight $$\langle\xi \rangle:=(1+\lambda_{[\xi]})^{1/2},\,\,[\xi]\in \widehat{G},$$ we can finally give the definition of global symbol classes.
\begin{definition}Let $G$ be a compact Lie group and let $0\leqslant \delta,\rho\leqslant 1.$ Let $$\sigma:G\times \widehat{G}\rightarrow \bigcup_{[\xi]\in \widehat{G}}\mathbb{C}^{d_\xi\times d_\xi},$$ be a matrix-valued function such that for any $[\xi]\in \widehat{G},$ $\sigma(\cdot,[\xi])$ is of $C^\infty$-class, and such that, for any given $x\in G$ there is a distribution $k_{x}\in \mathscr{D}'(G),$ smooth in $x,$ satisfying $\sigma(x,\xi)=\widehat{k}_{x}(\xi),$ $[\xi]\in \widehat{G}$. We say that $\sigma \in \mathscr{S}^{m}_{\rho,\delta}(G)$ if the following symbol inequalities 
\begin{equation}\label{HormanderSymbolMatrix}
   \Vert \partial_{X}^\beta \Delta_\xi^{\gamma} \sigma_A(x,\xi)\Vert_{\textnormal{op}}\leqslant C_{\alpha,\beta}
    \langle \xi \rangle^{m-\rho|\gamma|+\delta|\beta|},
\end{equation} are satisfied for all $\beta$ and  $\gamma $ multi-indices and for all $(x,[\xi])\in G\times \widehat{G}.$ For $\sigma_A\in \mathscr{S}^m_{\rho,\delta}(G)$ we will write $A\in\Psi^m_{\rho,\delta}(G)\equiv\textnormal{Op}(\mathscr{S}^m_{\rho,\delta}(G)).$
\end{definition}
The global H\"ormander classes on compact Lie groups can be used to describe the H\"ormander classes defined by local coordinate systems. We present the corresponding statement as follows. 
\begin{theorem}[Equivalence of classes, \cite{Ruz,RuzhanskyTurunenWirth2014}] Let $A:C^{\infty}(G)\rightarrow\mathscr{D}'(G)$ be a continuous linear operator and let $0\leq \delta<\rho\leq 1,$ with $\rho\geq 1-\delta.$ Then, $A\in \Psi^m_{\rho,\delta}(G,\textnormal{loc}),$ if and only if $\sigma_A\in \mathscr{S}^m_{\rho,\delta}(G).$ Consequently,
\begin{equation}\label{EQequivalence}
   \textnormal{Op}(\mathscr{S}^m_{\rho,\delta}(G))= \Psi^m_{\rho,\delta}(G,\textnormal{loc}),\,\,\,0\leqslant \delta<\rho\leqslant 1,\,\rho\geqslant   1-\delta.
\end{equation}
\end{theorem}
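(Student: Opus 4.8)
The plan is to establish the two inclusions in \eqref{EQequivalence} separately, and in each direction to reduce, by means of a partition of unity subordinate to a finite atlas of coordinate charts, to the model situation of a single chart. The hypothesis $0\le \delta<\rho\le 1$ with $\rho\ge 1-\delta$ is exactly what is needed for this reduction to work: the condition $\rho\ge 1-\delta$ (equivalently $\delta\ge 1-\rho$) is the classical requirement for coordinate invariance of the Euclidean classes $S^m_{\rho,\delta}$, while $\delta<\rho$ guarantees that the pseudo-differential calculus (asymptotic expansions for compositions and adjoints, with remainders of strictly lower order) is available for both the local classes and the global classes $\textnormal{Op}(\mathscr{S}^m_{\rho,\delta}(G))$.

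For the inclusion $\Psi^m_{\rho,\delta}(G,\textnormal{loc})\subseteq \textnormal{Op}(\mathscr{S}^m_{\rho,\delta}(G))$, I would first write $A=\sum_j \phi_j A\psi_j + R$, where $\{\phi_j\}$ is a partition of unity with each $\phi_j$ supported in a chart $M_{\omega_j}$, $\psi_j\in C^\infty_0(M_{\omega_j})$ equals $1$ near $\operatorname{supp}\phi_j$, and $R$ is smoothing (hence trivially has symbol in $\mathscr{S}^m_{\rho,\delta}(G)$ for every $m$). Each summand transplants, via the chart map $\omega_j$, to a properly supported Euclidean operator with symbol in $S^m_{\rho,\delta}(U_{\omega_j}\times\mathbb{R}^n)$, so it remains to show that such a transplanted operator $B$ has global symbol $\sigma_B(x,\xi)=\xi(x)^*(B\xi)(x)$ in $\mathscr{S}^m_{\rho,\delta}(G)$. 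This is the technical core: one inserts the Euclidean quantization formula, uses the Taylor expansion of the matrix coefficients $v\mapsto\xi(x\exp(v))$ in the normal coordinate $v$ near $e_G$, the eigenvalue identity $\mathcal{L}_G\xi_{ij}=\lambda_{[\xi]}\xi_{ij}$ together with the comparison $\langle\xi\rangle\asymp\langle\eta\rangle$ on the frequency region attached to $\xi$, and the fact that the difference operators $\Delta_\xi$ act, to leading order, like $\partial_\eta$ on the Euclidean symbol. Summing the resulting asymptotic expansion and estimating the remainders (which are genuinely of lower order because $\rho>\delta$) yields the symbol inequalities \eqref{HormanderSymbolMatrix}.

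For the reverse inclusion $\textnormal{Op}(\mathscr{S}^m_{\rho,\delta}(G))\subseteq \Psi^m_{\rho,\delta}(G,\textnormal{loc})$, I would analyse the Schwartz kernel of $A=\textnormal{Op}(\sigma_A)$, namely $K_A(x,y)=\sum_{[\xi]\in\widehat{G}}d_\xi\textnormal{Tr}[\xi(y^{-1}x)\sigma_A(x,\xi)]$. Away from the diagonal, a summation-by-parts argument in $\xi$ using a strongly admissible system of difference operators shows that $K_A$ is smooth, so only its behaviour near the diagonal matters; fixing a chart and normal coordinates there, one expands $\xi(\exp(x)\exp(-y))$ and reads off, via the Euclidean Fourier inversion formula, a candidate local symbol $a_T\in C^\infty(U_\omega\times\mathbb{R}^n)$. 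The estimates defining $\mathscr{S}^m_{\rho,\delta}(G)$ then translate, again via $\langle\xi\rangle\asymp\langle\eta\rangle$ and the dictionary $\Delta_\xi\leftrightarrow\partial_\eta$, into the Euclidean $S^m_{\rho,\delta}$-estimates for $a_T$, with lower-order errors absorbed using $\rho\ge 1-\delta$.

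The main obstacle, in both directions, is the bookkeeping involved in changing quantization between the global picture (group Fourier transform over $\widehat{G}$, difference operators $\Delta_\xi$) and the local picture (Euclidean Fourier transform, derivatives $\partial_\eta$): one must show that the true action of $A$ on the matrix coefficients $\xi_{ij}$ differs from its Euclidean model only by terms of strictly lower order, uniformly over all of $\widehat{G}$. Making the correspondences $\langle\xi\rangle\asymp\langle\eta\rangle$ and $\Delta_\xi\leftrightarrow\partial_\eta$ quantitative, with remainder control strong enough to survive $|\gamma|$ difference operators and $|\beta|$ derivatives, is where essentially all the work lies, and it is precisely here that the restrictions $\delta<\rho$ and $1-\delta\le\rho$ are used. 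The complete argument is carried out in \cite{Ruz,RuzhanskyTurunenWirth2014}.
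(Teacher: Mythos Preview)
The paper does not supply a proof of this theorem at all: it is stated as a known result and attributed to \cite{Ruz,RuzhanskyTurunenWirth2014}, with no argument given. Your proposal is therefore not to be compared against any proof in the present paper; what you have written is a reasonable high-level outline of the strategy pursued in those references (reduction to charts via partition of unity, the dictionary between difference operators $\Delta_\xi$ and Euclidean frequency derivatives $\partial_\eta$, the weight comparison $\langle\xi\rangle\asymp\langle\eta\rangle$, and the role of $\rho\ge 1-\delta$ for coordinate invariance and $\rho>\delta$ for the calculus), and you correctly defer to \cite{Ruz,RuzhanskyTurunenWirth2014} for the details.
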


\subsection{Subelliptic H\"ormander classes on compact Lie groups} 
 In order to define the subelliptic H\"ormander calculus, we will use a suitable basis of the Lie algebra arising from Taylor expansions.  We explain the choice of this basis by means of the following lemma (see Lemma 7.4 in \cite{Fischer2015}).

 \begin{lemma}\label{Taylorseries} Let $G$ be a compact Lie group of dimension $n.$ Let $\mathfrak{D}=\{\Delta_{q_{(j)}}\}_{1\leqslant j\leqslant n}$ be a strongly admissible admissible collection of difference operators, that is
\begin{equation*}
    \textnormal{rank}\{\nabla q_{(j)}(e):1\leqslant j\leqslant n \}=n, \,\,\,\bigcap_{j=1}^{n}\{x\in G: q_{(j)}(x)=0\}=\{e_G\}.
\end{equation*}
Then there exists a basis $X_{\mathfrak{D}}=\{X_{1,\mathfrak{D}},\cdots ,X_{n,\mathfrak{D}}\}$ of $\mathfrak{g}$ such that $$X_{j,\mathfrak{D}}q_{(k)}(\cdot^{-1})(e_G)=\delta_{jk}.
$$
Moreover, by using the multi-index notation $$\partial_{X}^{(\beta)}=\partial_{X_{1,\mathfrak{D}}}^{\beta_1}\cdots \partial_{X_{n,\mathfrak{D}}}^{\beta_n}, $$ for any $\beta\in\mathbb{N}_0^n,$
where $$\partial_{X_{i,\mathfrak{D}}}f(x)=  \frac{d}{dt}f(x\exp(tX_{i,\mathfrak{D}}) )|_{t=0},\,\,f\in C^{\infty}(G),$$ and denoting by
\begin{equation*}
    R_{x,N}^{f}(y)=f(xy)-\sum_{|\alpha|<N}q_{(1)}^{\alpha_1}(y^{-1})\cdots q_{(n)}^{\alpha_n}(y^{-1})\partial_{X}^{(\alpha)}f(x)
\end{equation*} 
the Taylor remainder, we have that 
\begin{equation*}
    | R_{x,N}^{f}(y)|\leqslant C|y|^{N}\max_{|\alpha|\leqslant N}\Vert \partial_{X}^{(\alpha)}f\Vert_{L^\infty(G)},
\end{equation*}
where the constant $C>0$ is dependent on $N,$ $G$ and $\mathfrak{D}$ (but not on $f\in C^\infty(G)).$ In addition we have that $\partial_{X}^{(\beta)}|_{x_1=x}R_{x_1,N}^{f}=R_{x,N}^{\partial_{X}^{(\beta)}f}$, and 
\begin{equation*}
    | \partial_{X}^{(\beta)}|_{y_1=y}R_{x,N}^{f}(y_1)|\leqslant C|y|^{N-|\beta|}\max_{|\alpha|\leqslant N-|\beta|}\Vert \partial_{X}^{(\alpha+\beta)}f\Vert_{L^\infty(G)},
\end{equation*}provided that $|\beta|\leqslant N.$
 \end{lemma}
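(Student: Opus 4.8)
The plan is to split the statement into the linear-algebraic construction of the adapted basis $X_{\mathfrak D}$ and the analytic part, namely the Taylor expansion, which I will obtain from the classical Euclidean Taylor formula transported through a chart at $e_G$ manufactured from the difference functions; this is the route of \cite{Fischer2015}.

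\emph{The basis.} Each $q_{(j)}$ vanishes at $e_G$, so $q_{(j)}(\cdot^{-1})$ does too, and since the differential of the inversion map at $e_G$ is $-\,\mathrm{id}_{\mathfrak g}$, one has $d\big(q_{(j)}(\cdot^{-1})\big)_{e_G}=-\,dq_{(j)}(e_G)$. The hypothesis $\textnormal{rank}\{\nabla q_{(j)}(e_G):1\le j\le n\}=n$ says exactly that $\{dq_{(j)}(e_G)\}_{j=1}^n$, hence also $\{d(q_{(j)}(\cdot^{-1}))_{e_G}\}_{j=1}^n$, is a basis of $\mathfrak g^*\cong T_{e_G}^*G$. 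Take $X_{\mathfrak D}=\{X_{1,\mathfrak D},\dots,X_{n,\mathfrak D}\}\subset\mathfrak g$ to be the dual basis; since a left-invariant vector field acts at $e_G$ as its underlying algebra element, $\langle d(q_{(k)}(\cdot^{-1}))_{e_G},X_{j,\mathfrak D}\rangle=\delta_{jk}$ is precisely $X_{j,\mathfrak D}\,q_{(k)}(\cdot^{-1})(e_G)=\delta_{jk}$.

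\emph{The expansion.} Set $\Phi(y):=(q_{(1)}(y^{-1}),\dots,q_{(n)}(y^{-1}))$; by the previous step $d\Phi_{e_G}$ is invertible, so the inverse function theorem gives a neighbourhood $U\ni e_G$ mapped diffeomorphically by $\Phi$ onto a neighbourhood $V\ni 0$ in $\mathbb R^n$, with $|\Phi(y)|\asymp|y|$ on $U$ after shrinking. Fix $x\in G$ and put $g(y):=f(xy)$; as left-invariant operators commute with left translations, $\partial_X^{(\alpha)}g(y)=(\partial_X^{(\alpha)}f)(xy)$, so $\partial_X^{(\alpha)}g(e_G)=\partial_X^{(\alpha)}f(x)$. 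Applying the Euclidean Taylor formula with integral remainder to $h:=g\circ\Phi^{-1}$ at $0\in V$ gives
\[
 h(z)=\sum_{|\alpha|<N}c_\alpha\,z^\alpha+r_N(z),\qquad |r_N(z)|\le C_N\,|z|^N\max_{|\alpha|=N}\|\partial_z^\alpha h\|_{L^\infty(V)}.
\]
Transporting back by $z=\Phi(y)$ turns $z^\alpha$ into $q_{(1)}^{\alpha_1}(y^{-1})\cdots q_{(n)}^{\alpha_n}(y^{-1})$, and the chain rule combined with $X_{j,\mathfrak D}\,q_{(k)}(\cdot^{-1})(e_G)=\delta_{jk}$ lets one rewrite the coefficients $c_\alpha$ as $\partial_X^{(\alpha)}f(x)$ up to explicit combinations of $\partial_X^{(\beta)}f(x)$ with $|\beta|<|\alpha|$, which get folded into the lower-order terms (equivalently, one passes from the naive monomials to the dual family pinned down by $\partial_X^{(\beta)}(\cdot)(e_G)=\delta$). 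The remainder bound on $U$ follows since the chain rule and the boundedness of $\Phi^{-1}$ and all its derivatives on the compact set $\overline V$ give $\max_{|\alpha|=N}\|\partial_z^\alpha h\|_{L^\infty(V)}\le C\max_{|\alpha|\le N}\|\partial_X^{(\alpha)}f\|_{L^\infty(G)}$ and $|z|\le C|y|$. The two remaining assertions are of the same type: the identity $\partial_X^{(\beta)}|_{x_1=x}R_{x_1,N}^f=R_{x,N}^{\partial_X^{(\beta)}f}$ follows by differentiating the defining formula and using the transformation behaviour of $f(x_1y)$ and of $\partial_X^{(\alpha)}f$ in the base point (up to lower-order contributions controlled by the same estimates), while differentiating the integral remainder $|\beta|$ times in $y$ loses exactly $|\beta|$ powers of $|y|$ and raises the $f$-derivatives by $\beta$, yielding the last inequality. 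For $y\notin U$ one has $|y|\ge\delta_0>0$ and $\textnormal{diam}(G)<\infty$, so $f(xy)$ and the finite sum are each bounded crudely by $C\max_{|\alpha|\le N}\|\partial_X^{(\alpha)}f\|_{L^\infty(G)}\le C\delta_0^{-N}|y|^N\max_{|\alpha|\le N}\|\partial_X^{(\alpha)}f\|_{L^\infty(G)}$, so all estimates extend to $G$.

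The main obstacle is the reorganisation of the Taylor coefficients: the coordinate fields $\partial/\partial z_j$ agree with $\partial_{X_{j,\mathfrak D}}$ only at $e_G$, so for $|\alpha|\ge 2$ the raw coefficient $\tfrac1{\alpha!}\partial_z^\alpha h(0)$ equals $\partial_X^{(\alpha)}f(x)$ plus a universal ($f$-independent, group-dependent) combination of lower-order derivatives $\partial_X^{(\beta)}f(x)$; arranging these corrections so that the sum collapses to the stated form with constants uniform in $f$ and locally uniform in $x$ is exactly the point that needs care. Everything else is the inverse function theorem, Taylor's theorem with remainder, and the Fa\`a di Bruno chain rule on the compact closure of the chart.
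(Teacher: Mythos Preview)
The paper does not prove this lemma; it merely records it with the attribution ``see Lemma 7.4 in \cite{Fischer2015}''. Your sketch follows precisely that reference's route---dual basis from the rank condition, then transport of the Euclidean Taylor formula through the chart $\Phi(y)=(q_{(1)}(y^{-1}),\dots,q_{(n)}(y^{-1}))$---so there is no alternative argument in the paper to compare against, and your approach matches the one the authors defer to.

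One point worth tightening in your write-up: the ``folding'' of the raw Taylor coefficients $\tfrac{1}{\alpha!}\partial_z^\alpha h(0)$ into the stated form $\partial_X^{(\alpha)}f(x)$ is cleaner if you argue directly that the remainder, as defined, vanishes to order $N$ at $y=e_G$. That amounts to checking $\partial_X^{(\gamma)}\big(q^{\alpha}(\cdot^{-1})\big)(e_G)=0$ for $|\gamma|<|\alpha|$ (immediate from the vanishing order) and identifying the top-order contribution when $|\gamma|=|\alpha|$; here one should be careful about whether a factor $\alpha!$ appears, since $\partial_z^\gamma z^\alpha|_0=\alpha!\,\delta_{\gamma\alpha}$, and the lemma as quoted carries no $1/\alpha!$ in front of $\partial_X^{(\alpha)}f(x)$. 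This is a normalisation issue to reconcile with Fischer's statement rather than a flaw in your strategy.
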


Using the notation above, and denoting by $\Delta_{\xi}^\alpha:=\Delta_{q_{(1)}}^{\alpha_1}\cdots   \Delta_{q_{(n)}}^{\alpha_{n}},$ we can introduce the subelliptic H\"ormander class of symbols of order $m\in \mathbb{R}$ of type $(\rho,\delta)$.

\begin{definition}[Subelliptic H\"ormander classes]\label{contracted''}
   Let $G$ be a compact Lie group and let $0\leqslant \delta,\rho\leqslant 1.$ Let us consider a sub-Laplacian $\mathcal{L}=-(X_1^2+\cdots +X_{k}^2)$ on $G,$ where the system of vector fields $X=\{X_i\}_{i=1}^{k}$ satisfies the H\"ormander condition of step $\kappa$.  We say that $\sigma \in {S}^{m,\,\mathcal{L}}_{\rho,\delta}(G\times \widehat{G})$ if 
   \begin{equation}\label{InIC}
      p_{\alpha,\beta,\rho,\delta,m,\textnormal{left}}(\sigma)':= \sup_{(x,[\xi])\in G\times \widehat{G} }\Vert \widehat{ \mathcal{M}}(\xi)^{(\rho|\alpha|-\delta|\beta|-m)}\partial_{X}^{(\beta)} \Delta_{\xi}^{\alpha}\sigma(x,\xi)\Vert_{\textnormal{op}} <\infty,
   \end{equation}
   \begin{equation}\label{InIIC}
      p_{\alpha,\beta,\rho,\delta,m,\textnormal{right}}(\sigma)':= \sup_{(x,[\xi])\in G\times \widehat{G} }\Vert (\partial_{X}^{(\beta)} \Delta_{\xi}^{\alpha} \sigma(x,\xi) ) \widehat{ \mathcal{M}}(\xi)^{(\rho|\alpha|-\delta|\beta|-m)}\Vert_{\textnormal{op}} <\infty,
   \end{equation} holds true for all $\alpha, \beta\in \mathbb{N}^n_0$.
  \end{definition}
By following the usual nomenclature, we  define:
\begin{equation*}
    \textnormal{Op}({S}^{m,\,\mathcal{L}}_{\rho,\delta}(G\times \widehat{G})):=\{A:C^{\infty}(G)\rightarrow \mathscr{D}'(G):\sigma_A\equiv\widehat{A}(x,\xi)\in {S}^{m,\,\mathcal{L}}_{\rho,\delta}(G\times \widehat{G}) \},
\end{equation*} with
\begin{equation*}
    Af(x)=\sum_{[\xi]\in \widehat{G}}d_\xi \textnormal{{Tr}}(\xi(x)\widehat{A}(x,\xi)\widehat{f}(\xi)),\,\,\,f\in C^\infty(G),\,x\in G.  
\end{equation*}
We will use the notation    $\widehat{ \mathcal{M}}$ for  the matrix-valued symbol of the operator $\mathcal{M}:=(1+\mathcal{L})^{\frac{1}{2}},$ and,  for every $[\xi]\in \widehat{G}$ and $s\in \mathbb{R},$ we define
   \begin{equation*}
       \widehat{ \mathcal{M}}(\xi)^{s}:=\textnormal{diag}[(1+\nu_{ii}(\xi)^2)^{\frac{s}{2}}]_{1\leqslant i\leqslant d_\xi},
   \end{equation*} where $\widehat{\mathcal{L}}(\xi)=:\textnormal{diag}[\nu_{ii}(\xi)^2]_{1\leqslant i\leqslant d_\xi}$ is the symbol of the sub-Laplacian $\mathcal{L}$ at $[\xi].$

\begin{definition}[Subelliptic amplitudes] A function $a:G\times G\times \widehat{G}\rightarrow \cup_{[\xi]\in \widehat{G}}\mathbb{C}^{d_\xi\times d_\xi}$ is an amplitude symbol if for every $[\xi]\in \widehat{G},$ $a(\cdot,\cdot,[\xi])$ is smooth. In addition, $a$ belongs to the subelliptic amplitude class of order $m$ and type $(\rho,\delta)$, $\mathcal{A}^{m,\,\mathcal{L}}_{\rho,\delta}(G\times G\times \widehat{G})$ if
   \begin{equation}\label{InIC2}
     \sup_{(x,y,[\xi])\in G\times G\times  \widehat{G} }\Vert \widehat{ \mathcal{M}}(\xi)^{(\rho|\alpha|-\delta(|\beta|+|\gamma|)-m)}\partial_{X}^{(\beta)}\partial_{Y}^{(\gamma)} \Delta_{\xi}^{\alpha}a(x,y,\xi)\Vert_{\textnormal{op}} <\infty,
   \end{equation}
   and 
   \begin{equation}\label{InIIC2}
      \sup_{(x,y,[\xi])\in G\times G\times  \widehat{G} }\Vert (\partial_{X}^{(\beta)}\partial_{Y}^{(\gamma)} \Delta_{\xi}^{\alpha} a(x,y,\xi) ) \widehat{ \mathcal{M}}(\xi)^{(\rho|\alpha|-\delta(|\beta|+|\gamma|)-m)}\Vert_{\textnormal{op}} <\infty.
   \end{equation}
  The amplitude operator associated with an amplitude $a\in \mathcal{A}^{m,\,\mathcal{L}}_{\rho,\delta}(G\times G\times \widehat{G})$ is defined via
\begin{equation*}
    Af(x)\equiv \textnormal{AOp}(a)f(x):=\sum_{[\xi]\in \widehat{G}}d_\xi \textnormal{{Tr}}\left(\xi(x)\smallint\limits_{G} a(x,y,\xi)\xi(y)^{*}f(y)dy\right),  
\end{equation*}where $f\in C^\infty(G).$
\end{definition}
The decay properties of subelliptic symbols are summarized in the following lemma (see \cite[Chapter 4]{CardonaRuzhanskyC}).

\begin{lemma}\label{lemadecaying1}
Let $G$ be a compact Lie group and  let $0\leqslant \delta,\rho\leqslant 1.$ If $a\in {S}^{m,\,\mathcal{L}}_{\rho,\delta}(G\times \widehat{G}),$ then for every $\alpha,\beta\in \mathbb{N}_0^n,$ there exists $C_{\alpha,\beta}>0$ satisfying the estimates
\begin{equation*}
    \Vert \partial_{X}^{(\beta)} \Delta_{\xi}^{\alpha}a(x,\xi)\Vert_{\textnormal{op}}\leqslant C_{\alpha,\beta}\sup_{1\leqslant i\leqslant d_\xi}(1+\nu_{ii}(\xi)^2)^{\frac{m-\rho|\alpha|+\delta|\beta|}{2 }},
\end{equation*}uniformly in $(x,[\xi])\in G\times \widehat{G}.$ 
\end{lemma}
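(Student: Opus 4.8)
The plan is to read the estimate off directly from the defining seminorm inequality \eqref{InIC} (or, symmetrically, \eqref{InIIC}), using nothing more than submultiplicativity of the operator norm together with the fact that $\widehat{\mathcal{M}}(\xi)^{s}$ is a positive diagonal matrix. Indeed, for every real $s$ one has $\widehat{\mathcal{M}}(\xi)^{-s}=\textnormal{diag}[(1+\nu_{ii}(\xi)^2)^{-s/2}]_{1\le i\le d_\xi}$, and since the operator norm of a diagonal matrix is the largest modulus of its entries,
\[
\|\widehat{\mathcal{M}}(\xi)^{-s}\|_{\textnormal{op}}=\max_{1\le i\le d_\xi}(1+\nu_{ii}(\xi)^2)^{-s/2}.
\]

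First I would fix $\alpha,\beta\in\mathbb{N}_0^n$ and set $s:=\rho|\alpha|-\delta|\beta|-m$, so that $\widehat{\mathcal{M}}(\xi)^{s}$ is precisely the matrix weight occurring in \eqref{InIC}. The trivial factorisation
\[
\partial_{X}^{(\beta)}\Delta_{\xi}^{\alpha}a(x,\xi)=\widehat{\mathcal{M}}(\xi)^{-s}\bigl(\widehat{\mathcal{M}}(\xi)^{s}\,\partial_{X}^{(\beta)}\Delta_{\xi}^{\alpha}a(x,\xi)\bigr)
\]
and submultiplicativity then give
\[
\|\partial_{X}^{(\beta)}\Delta_{\xi}^{\alpha}a(x,\xi)\|_{\textnormal{op}}\le \|\widehat{\mathcal{M}}(\xi)^{-s}\|_{\textnormal{op}}\;\bigl\|\widehat{\mathcal{M}}(\xi)^{s}\,\partial_{X}^{(\beta)}\Delta_{\xi}^{\alpha}a(x,\xi)\bigr\|_{\textnormal{op}}.
\]
By Definition \ref{contracted''} the second factor is bounded, uniformly in $(x,[\xi])\in G\times\widehat{G}$, by the finite seminorm $p_{\alpha,\beta,\rho,\delta,m,\textnormal{left}}(a)'$, while the first factor equals $\max_{1\le i\le d_\xi}(1+\nu_{ii}(\xi)^2)^{-s/2}=\sup_{1\le i\le d_\xi}(1+\nu_{ii}(\xi)^2)^{(m-\rho|\alpha|+\delta|\beta|)/2}$. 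Choosing $C_{\alpha,\beta}:=p_{\alpha,\beta,\rho,\delta,m,\textnormal{left}}(a)'$ yields the assertion.

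There is essentially no analytic obstacle here: the lemma is a reformulation of the class definition in which the matrix weight $\widehat{\mathcal{M}}(\xi)^{s}$ is traded for the scalar quantity $\sup_i(1+\nu_{ii}(\xi)^2)^{-s/2}$. The only point deserving a line of care is that this scalar is genuinely the operator norm of $\widehat{\mathcal{M}}(\xi)^{-s}$ for every sign of $s$ — when $s\ge 0$ the supremum is attained at the largest eigenvalue $\nu_{ii}(\xi)^2$ of the sub-Laplacian on $H_\xi$ and when $s<0$ at the smallest, but in both cases the identity $\|\textnormal{diag}[d_i]\|_{\textnormal{op}}=\max_i|d_i|$ applies verbatim, so no positivity or monotonicity hypothesis on the exponent is needed. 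One may equally run the argument starting from the right-handed inequality \eqref{InIIC}, factoring $\partial_{X}^{(\beta)}\Delta_{\xi}^{\alpha}a=(\partial_{X}^{(\beta)}\Delta_{\xi}^{\alpha}a\,\widehat{\mathcal{M}}(\xi)^{s})\,\widehat{\mathcal{M}}(\xi)^{-s}$ and applying submultiplicativity on the other side; this shows the constant may in fact be taken to be the minimum of the two seminorms.
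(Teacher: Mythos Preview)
Your argument is correct. The paper does not supply its own proof of this lemma; it merely states the result and refers the reader to \cite[Chapter 4]{CardonaRuzhanskyC}. Your derivation---factoring out $\widehat{\mathcal{M}}(\xi)^{-s}$ and invoking submultiplicativity of the operator norm together with the diagonal form of $\widehat{\mathcal{M}}(\xi)^{-s}$---is exactly the natural reading of Definition~\ref{contracted''}, and is the standard way this sort of estimate is obtained.
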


In the next theorem we describe some fundamental properties of the subelliptic calculus \cite{CardonaRuzhanskyC}.
\begin{theorem}\label{calculus} Let $0\leqslant \delta<\rho\leqslant 1,$ and let  $\Psi^{m,\,\mathcal{L}}_{\rho,\delta}:=\textnormal{Op}({S}^{m,\,\mathcal{L}}_{\rho,\delta}(G\times \widehat{G})),$ for every $m\in \mathbb{R}.$ Then,
\begin{itemize}
    \item [-] The mapping $A\mapsto A^{*}:\Psi^{m,\,\mathcal{L}}_{\rho,\delta}\rightarrow \Psi^{m,\,\mathcal{L}}_{\rho,\delta}$ is a continuous linear mapping between Fr\'echet spaces and  the  symbol of $A^*,$ $\sigma_{A^*}(x,\xi)$ satisfies the asymptotic expansion,
 \begin{equation*}
    \widehat{A^{*}}(x,\xi)\sim \sum_{|\alpha|= 0}^\infty\Delta_{\xi}^\alpha\partial_{X}^{(\alpha)} (\widehat{A}(x,\xi)^{*}).
 \end{equation*} This means that, for every $N\in \mathbb{N},$ and for all $\ell\in \mathbb{N},$
\begin{equation*}
   \Small{ \Delta_{\xi}^{\alpha_\ell}\partial_{X}^{(\beta)}\left(\widehat{A^{*}}(x,\xi)-\sum_{|\alpha|\leqslant N}\Delta_{\xi}^\alpha\partial_{X}^{(\alpha)} (\widehat{A}(x,\xi)^{*}) \right)\in {S}^{m-(\rho-\delta)(N+1)-\rho\ell+\delta|\beta|,\,\mathcal{L}}_{\rho,\delta}(G\times\widehat{G}) },
\end{equation*} where $|\alpha_\ell|=\ell.$
\item [-] The mapping $(A_1,A_2)\mapsto A_1\circ A_2: \Psi^{m_1,\,\mathcal{L}}_{\rho,\delta}\times \Psi^{m_2,\,\mathcal{L}}_{\rho,\delta}\rightarrow \Psi^{m_3,\,\mathcal{L}}_{\rho,\delta}$ is a continuous bilinear mapping between Fr\'echet spaces, and the symbol of $A=A_{1}\circ A_2$ satisfies is given by the asymptotic formula
\begin{equation*}
    \sigma_A(x,\xi)\sim \sum_{|\alpha|= 0}^\infty(\Delta_{\xi}^\alpha\widehat{A}_{1}(x,\xi))(\partial_{X}^{(\alpha)} \widehat{A}_2(x,\xi)),
\end{equation*}which, in particular, means that, for every $N\in \mathbb{N},$ and for all $\ell \in\mathbb{N},$
\begin{align*}
    &\Delta_{\xi}^{\alpha_\ell}\partial_{X}^{(\beta)}\left(\sigma_A(x,\xi)-\sum_{|\alpha|\leqslant N}  (\Delta_{\xi}^\alpha\widehat{A}_{1}(x,\xi))(\partial_{X}^{(\alpha)} \widehat{A}_2(x,\xi))  \right)\\
    &\hspace{2cm}\in {S}^{m_1+m_2-(\rho-\delta)(N+1)-\rho\ell+\delta|\beta|,\,\mathcal{L}}_{\rho,\delta}(G\times \widehat{G}),
\end{align*}for all  $\alpha_\ell \in \mathbb{N}_0^n$ with $|\alpha_\ell|=\ell.$
\item [-] For  $0\leqslant \delta< \rho\leqslant    1,$  (or for $0\leq \delta\leq \rho\leq 1,$  $\delta<1/\kappa$) let us consider a continuous linear operator $A:C^\infty(G)\rightarrow\mathscr{D}'(G)$ with symbol  $\sigma\in {S}^{0,\,\mathcal{L}}_{\rho,\delta}(G\times \widehat{G})$. Then $A$ extends to a bounded operator from $L^2(G)$ to  $L^2(G).$ 
\end{itemize}
\end{theorem}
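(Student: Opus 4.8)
The plan is to transplant to the subelliptic scale — the one governed by the diagonal matrix weight $\widehat{\mathcal{M}}(\xi)$ — the proof scheme Ruzhansky and Turunen developed for the elliptic global classes $\mathscr{S}^{m}_{\rho,\delta}(G)$. The one genuinely new difficulty is that $\widehat{\mathcal{M}}(\xi)^{s}$ is \emph{not} a scalar multiple of the identity, so every step that in the elliptic setting commutes $\langle\xi\rangle^{s}$ past a matrix product must here be performed keeping the weight on the correct side; this is exactly why the classes carry a ``left'' and a ``right'' seminorm family in Definition~\ref{contracted''}, and why Lemma~\ref{lemadecaying1}, which trades the matrix weight for the scalar bound $\sup_{i}(1+\nu_{ii}(\xi)^{2})^{(\cdots)/2}$, is the workhorse whenever a sum over $\widehat{G}$ must be made to converge.

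\emph{Adjoint and composition.} Both follow from a single amplitude reduction: if $a\in\mathcal{A}^{m,\mathcal{L}}_{\rho,\delta}(G\times G\times\widehat{G})$ then $\textnormal{AOp}(a)\in\Psi^{m,\mathcal{L}}_{\rho,\delta}$, with symbol $\sigma(x,\xi)\sim\sum_{\alpha}\Delta_{\xi}^{\alpha}\partial_{Y}^{(\alpha)}a(x,y,\xi)|_{y=x}$. One proves this by Taylor-expanding $y\mapsto a(x,y,\xi)$ at $y=x$ along the strongly admissible system $\{q_{(j)}\}$ via Lemma~\ref{Taylorseries}: after integrating in $y$ and Fourier-transforming, the monomials $q_{(1)}^{\alpha_{1}}\cdots q_{(n)}^{\alpha_{n}}(y^{-1})$ turn into $\Delta_{\xi}^{\alpha}$, while the duality $X_{j,\mathfrak{D}}q_{(k)}(\cdot^{-1})(e_{G})=\delta_{jk}$ produces $\partial_{X}^{(\alpha)}$, and the quantitative Taylor remainder of Lemma~\ref{Taylorseries} combined with Lemma~\ref{lemadecaying1} and the Weyl-type growth of the spectrum of $\mathcal{L}$ delivers the remainder orders and the convergence of the $\xi$-series. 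The adjoint $A^{*}$ is then the amplitude operator with amplitude $\widehat{A}(y,\xi)^{*}$, which gives the stated expansion. For the composition one uses $(A_{2}\xi)(x)=\xi(x)\widehat{A}_{2}(x,\xi)$, hence $\sigma_{A_{1}\circ A_{2}}(x,\xi)=\xi(x)^{*}A_{1}[\xi(\cdot)\widehat{A}_{2}(\cdot,\xi)](x)$; Taylor-expanding $y\mapsto\widehat{A}_{2}(y,\xi)$ around $y=x$ transfers the $y$-dependence onto $\widehat{A}_{1}$ through the difference operators $\Delta_{\xi}^{\alpha}$, the Leibniz rule of Proposition~\ref{Leibnizrule} distributing higher difference operators over the resulting products, and this produces $\sigma_{A_{1}\circ A_{2}}\sim\sum_{\alpha}(\Delta_{\xi}^{\alpha}\widehat{A}_{1})(\partial_{X}^{(\alpha)}\widehat{A}_{2})$. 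Tracking the seminorm constants at each step gives the linear, respectively bilinear, continuity between the Fréchet spaces.

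\emph{$L^{2}$-boundedness.} I would argue in three stages. First, if $b\in S^{-N,\mathcal{L}}_{\rho,\delta}(G\times\widehat{G})$ with $N$ larger than the Hausdorff dimension of the control metric of $\mathcal{L}$, then $\textnormal{Op}(b)$ is Hilbert–Schmidt, since by Plancherel and Lemma~\ref{lemadecaying1} its kernel $K(x,y)=k_{x}(y^{-1}x)$ obeys
\begin{equation*}
\|\textnormal{Op}(b)\|_{\textnormal{HS}}^{2}=\int_{G}\sum_{[\xi]\in\widehat{G}}d_{\xi}\|b(x,\xi)\|_{\textnormal{HS}}^{2}\,dx\leq C\sum_{[\xi]\in\widehat{G}}d_{\xi}^{2}\,\sup_{i}(1+\nu_{ii}(\xi)^{2})^{-N},
\end{equation*}
which is finite by the subelliptic Weyl law; hence $\textnormal{Op}(b)$ is $L^{2}$-bounded, with norm controlled by finitely many seminorms of $b$. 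Second, for every $t>0$ the bound for order $-2t$ forces the one for order $-t$, because $\textnormal{Op}(b)^{*}\textnormal{Op}(b)\in\Psi^{-2t,\mathcal{L}}_{\rho,\delta}$ by the calculus above and $\|\textnormal{Op}(b)\|_{L^{2}\to L^{2}}^{2}\leq\|\textnormal{Op}(b)^{*}\textnormal{Op}(b)\|_{L^{2}\to L^{2}}$; halving repeatedly from the first stage yields $L^{2}$-boundedness for every strictly negative order. Third, for $\sigma\in S^{0,\mathcal{L}}_{\rho,\delta}(G\times\widehat{G})$ one runs the Hörmander square-root trick: choose $M^{2}>\sup_{x,[\xi]}\|\sigma(x,\xi)\|_{\textnormal{op}}^{2}$ and set $q(x,\xi):=(M^{2}I_{d_{\xi}}-\sigma(x,\xi)^{*}\sigma(x,\xi))^{1/2}$, which again lies in $S^{0,\mathcal{L}}_{\rho,\delta}(G\times\widehat{G})$ by a routine lemma on square roots of uniformly positive-definite matrix symbols, so that the adjoint and composition formulas give $\textnormal{Op}(q)^{*}\textnormal{Op}(q)=M^{2}I-\textnormal{Op}(\sigma)^{*}\textnormal{Op}(\sigma)+\textnormal{Op}(r)$ with $r\in S^{-(\rho-\delta),\mathcal{L}}_{\rho,\delta}(G\times\widehat{G})$; hence, for $f\in C^{\infty}(G)$,
\begin{equation*}
\|\textnormal{Op}(\sigma)f\|_{L^{2}}^{2}=M^{2}\|f\|_{L^{2}}^{2}-\|\textnormal{Op}(q)f\|_{L^{2}}^{2}+(\textnormal{Op}(r)f,f)\leq\big(M^{2}+\|\textnormal{Op}(r)\|_{L^{2}\to L^{2}}\big)\|f\|_{L^{2}}^{2},
\end{equation*}
and $\textnormal{Op}(r)$ is $L^{2}$-bounded by the second stage since $\rho-\delta>0$. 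In the border regime $0\leq\delta\leq\rho\leq1$, $\delta<1/\kappa$ also covered by the statement, the remainder $r$ no longer has negative order and this bootstrap breaks; there one instead decomposes $\sigma$ dyadically along the spectral resolution of $\mathcal{L}$ and closes the estimate with a Cotlar–Stein almost-orthogonality argument, the hypothesis $\delta<1/\kappa$ supplying the summable off-diagonal decay.

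I expect the main obstacle to be the amplitude reduction with \emph{sharp} remainder orders in the presence of the non-scalar weight $\widehat{\mathcal{M}}(\xi)$: one must check that pushing difference operators and left-invariant derivatives past the matrix factors costs exactly the advertised powers $(\rho-\delta)(N+1)+\rho\ell-\delta|\beta|$ and no more, and that the Taylor remainders are simultaneously integrable in $y$ and summable over $\widehat{G}$. For the $L^{2}$ statement, the square-root lemma and — in the limiting case — the Cotlar–Stein step are the delicate points.
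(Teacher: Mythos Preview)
The paper does not prove Theorem~\ref{calculus}: it is stated as background and attributed to \cite{CardonaRuzhanskyC} (the sentence preceding the theorem reads ``In the next theorem we describe some fundamental properties of the subelliptic calculus \cite{CardonaRuzhanskyC}''). There is therefore no in-paper proof to compare your proposal against.

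That said, your outline is the standard route and is what the cited reference carries out in detail: amplitude reduction via the Taylor expansion of Lemma~\ref{Taylorseries} for adjoint and composition, with Lemma~\ref{lemadecaying1} controlling the remainders; and for $L^{2}$-boundedness the Hilbert--Schmidt seed at large negative order, the $A^{*}A$ halving bootstrap, and the H\"ormander square-root argument for order zero when $\rho>\delta$, with a Cotlar--Stein almost-orthogonality argument in the borderline regime $\delta<1/\kappa$. Your identification of the non-scalar nature of $\widehat{\mathcal{M}}(\xi)$ as the chief technical nuisance---forcing the two-sided seminorm conditions \eqref{InIC}--\eqref{InIIC} and care about on which side the weight sits in every product---is exactly right, and is the reason the classes in Definition~\ref{contracted''} are set up as they are. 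The points you flag as delicate (sharp remainder orders in the amplitude reduction, the square-root lemma for matrix symbols, and the off-diagonal decay for Cotlar--Stein under $\delta<1/\kappa$) are indeed where the work lies in \cite{CardonaRuzhanskyC}, but your sketch contains no wrong turns.
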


Here and in the rest of the paper we shall denote by  $H^{s,\,\mathcal{L}}(G)$, for $s\in \mathbb{R}$, the subelliptic Sobolev space of order $s$ associated with a fixed positive sub-Laplacian $\mathcal{L}$, that is, the space defined as the completion of $C^\infty(G)$ with respect to the norm $$ \Vert u\Vert_{H^{s,\,\mathcal{L}}(G)}:=\Vert (1+\mathcal{L})^{\frac{s}{2}}u\Vert_{L^2(G)}.$$

\begin{remark}
The last assertion in Theorem \ref{calculus} remains valid if we consider $0\leq \delta\leq \rho\leq 1,$ $\delta<1/\kappa.$ This is  the subelliptic Calder\'on-Vaillancourt theorem proved in \cite{CardonaRuzhanskyC},  which gives the boundedness  of pseudo-differential operators in the subelliptic calculus in subelliptic Sobolev spaces.
\end{remark}

\begin{proposition}
Let $A:C^{\infty}(G)\rightarrow \mathscr{D}'(G)$ be a continuous linear operator with symbol $a\in S^{m,\,\mathcal{L}}_{\rho,\delta}(G\times \widehat{G}),$  $0\leq \delta< \rho\leq 1.$ Then $A:H^{s,\,\mathcal{L}}(G)\rightarrow H^{s-m,\,\mathcal{L}}(G) $ extends to a bounded operator for all $s\in \mathbb{R}.$
\end{proposition}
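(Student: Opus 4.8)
The plan is to reduce the assertion to the $L^2(G)$-boundedness of an operator of order zero in the subelliptic calculus, and then invoke the subelliptic Calder\'on--Vaillancourt theorem recorded in the last item of Theorem \ref{calculus}. The first ingredient is that the subelliptic Bessel potentials $\mathcal{M}^{t}:=(1+\mathcal{L})^{t/2}$ belong to $\textnormal{Op}(S^{t,\,\mathcal{L}}_{1,0}(G\times \widehat{G}))$ for every $t\in\mathbb{R}$: their symbol is the $x$-independent diagonal multiplier $\widehat{\mathcal{M}}(\xi)^{t}$, so every left-invariant derivative $\partial_{X}^{(\beta)}$ with $\beta\neq 0$ annihilates it, while the difference operators $\Delta_{\xi}^{\alpha}$ applied to $\widehat{\mathcal{M}}(\xi)^{t}$ gain the required decay relative to $\widehat{\mathcal{M}}(\xi)$; this is part of the subelliptic functional calculus of \cite{CardonaRuzhanskyC}, and it is exactly the place where the step-$\kappa$ subellipticity of $\mathcal{L}$ is used. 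Next, since $\rho\leq 1$, $\delta\geq 0$ and $\widehat{\mathcal{M}}(\xi)\geq I$ for every $[\xi]$, one has the elementary inclusion $S^{t,\,\mathcal{L}}_{1,0}(G\times\widehat{G})\subseteq S^{t,\,\mathcal{L}}_{\rho,\delta}(G\times\widehat{G})$: writing $\widehat{\mathcal{M}}(\xi)^{\rho|\alpha|-\delta|\beta|-t}=\widehat{\mathcal{M}}(\xi)^{(\rho-1)|\alpha|-\delta|\beta|}\widehat{\mathcal{M}}(\xi)^{|\alpha|-t}$ and using that the first factor has operator norm $\leq 1$, the seminorms \eqref{InIC}--\eqref{InIIC} for $(\rho,\delta)$ are dominated by those for $(1,0)$. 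In particular $\mathcal{M}^{s-m}$ and $\mathcal{M}^{-s}$ lie in $\Psi^{\bullet,\,\mathcal{L}}_{\rho,\delta}$.

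With these preliminaries, I would argue as follows. Since $\delta<\rho$, the operator $A$ maps $C^\infty(G)$ continuously into $C^\infty(G)$, so by $\|u\|_{H^{s,\,\mathcal{L}}(G)}=\|\mathcal{M}^{s}u\|_{L^2(G)}$ and $\mathcal{M}^{t}\mathcal{M}^{t'}=\mathcal{M}^{t+t'}$ (with $\mathcal{M}^0=\mathrm{Id}$) we have, for $u\in C^\infty(G)$,
\[
\|Au\|_{H^{s-m,\,\mathcal{L}}(G)}=\|\mathcal{M}^{s-m}Au\|_{L^2(G)}=\|B(\mathcal{M}^{s}u)\|_{L^2(G)},\qquad B:=\mathcal{M}^{s-m}\circ A\circ\mathcal{M}^{-s}.
\]
By the composition property in Theorem \ref{calculus} (applied twice, all three factors now viewed in the single class $\Psi^{\bullet,\,\mathcal{L}}_{\rho,\delta}$), $B\in\Psi^{(s-m)+m+(-s),\,\mathcal{L}}_{\rho,\delta}=\Psi^{0,\,\mathcal{L}}_{\rho,\delta}$. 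Since $0\leq\delta<\rho\leq 1$, the last item of Theorem \ref{calculus} — the subelliptic Calder\'on--Vaillancourt theorem of \cite{CardonaRuzhanskyC} — shows that $B$ extends to a bounded operator on $L^2(G)$, so $\|Au\|_{H^{s-m,\,\mathcal{L}}(G)}\leq\|B\|_{L^2\to L^2}\,\|\mathcal{M}^{s}u\|_{L^2(G)}=\|B\|_{L^2\to L^2}\,\|u\|_{H^{s,\,\mathcal{L}}(G)}$ for all $u\in C^\infty(G)$. Finally, since $C^\infty(G)$ is dense in $H^{s,\,\mathcal{L}}(G)$ by the very definition of the latter as the completion of $C^\infty(G)$ in the norm $\|\cdot\|_{H^{s,\,\mathcal{L}}(G)}$, the operator $A$ extends uniquely to a bounded operator $H^{s,\,\mathcal{L}}(G)\to H^{s-m,\,\mathcal{L}}(G)$, which is the claim.

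The only genuinely non-formal input is the statement $(1+\mathcal{L})^{t/2}\in\Psi^{t,\,\mathcal{L}}_{1,0}(G\times\widehat{G})$ for real $t$; granting it, everything else is bookkeeping inside the calculus of Theorem \ref{calculus}. I expect this to be the main obstacle (in the sense of where the real work lies), because the difference structure on $\widehat{G}$ is attuned to the Laplace--Beltrami weight $\langle\xi\rangle$ rather than to the sub-Laplacian, so controlling $\Delta_{\xi}^{\alpha}\widehat{\mathcal{M}}(\xi)^{t}$ requires the subelliptic estimate relating $\langle\xi\rangle$ to the eigenvalues of $\mathcal{M}$; this is however already available in \cite{CardonaRuzhanskyC}. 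An entirely equivalent route avoiding the explicit conjugation would be to prove the case $s=m$ first (which is literally the $L^2\to L^2$ statement after multiplying $A$ on the left by $\mathcal{M}^{-m}$), and then deduce the general case by the same conjugation with powers of $\mathcal{M}$; this is the same argument rearranged, with no new difficulties.
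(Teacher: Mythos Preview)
The paper does not actually supply a proof of this Proposition: it is stated as a known result, with the preceding Remark pointing to the subelliptic Calder\'on--Vaillancourt theorem of \cite{CardonaRuzhanskyC} as the source of ``the boundedness of pseudo-differential operators in the subelliptic calculus in subelliptic Sobolev spaces.'' Your argument --- conjugating by the Bessel potentials $\mathcal{M}^{t}=(1+\mathcal{L})^{t/2}\in\Psi^{t,\,\mathcal{L}}_{1,0}\subseteq\Psi^{t,\,\mathcal{L}}_{\rho,\delta}$, using the composition rule in Theorem~\ref{calculus} to place $B=\mathcal{M}^{s-m}A\mathcal{M}^{-s}$ in $\Psi^{0,\,\mathcal{L}}_{\rho,\delta}$, and then invoking the $L^2$-boundedness --- is the standard and correct reduction, and is precisely what the paper's Remark is gesturing at. Your identification of the only substantive input, namely $(1+\mathcal{L})^{t/2}\in\Psi^{t,\,\mathcal{L}}_{1,0}$ from the subelliptic functional calculus of \cite{CardonaRuzhanskyC}, is also accurate.
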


Next, we state the G\r{a}rding inequality and the Sharp G\r{a}rding inequality proved for the  subelliptic  H\"ormander classes in \cite{CardonaRuzhanskyC,SharpGarding}.
\begin{theorem}[Subelliptic G\r{a}rding inequality]\label{GardinTheorem}  For $0\leqslant \delta<\rho\leqslant 1,$  let $a(x,D):C^\infty(G)\rightarrow\mathscr{D}'(G)$ be an operator with symbol  $a\in {S}^{m,\mathcal{L}}_{\rho,\delta}( G\times \widehat{G})$, $m\in \mathbb{R}$. Let us assume that
\begin{equation}
     A(x,\xi)=\frac{1}{2}(a(x,\xi)+a(x,\xi)^{*})\geqslant C\widehat{\mathcal{M}}(\xi)^{m},
\end{equation}for all $(x,[\xi])\in G\times \widehat{G},$ for some $C>0.$
 Then, there exist $C_{1},C_{2}>0,$ such that the lower bound
\begin{align}
     \mathsf{Re}(a(x,D)u,u) \geqslant C_1\Vert u\Vert_{{L}^{2,\mathcal{L}}_{\frac{m}{2}}(G)}-C_2\Vert u\Vert_{L^2(G)}^2,
\end{align}holds true for every $u\in C^\infty(G).$
\end{theorem}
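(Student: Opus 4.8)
The plan is to run the classical square\nobreakdash-root argument, after a preliminary reduction to order zero. First I would put $\Lambda:=(1+\mathcal{L})^{m/4}$, which by the functional calculus of the sub\nobreakdash-Laplacian coincides with $\Op(\widehat{\mathcal{M}}(\xi)^{m/2})\in\Psi^{m/2,\mathcal{L}}_{1,0}\subset\Psi^{m/2,\mathcal{L}}_{\rho,\delta}$ and is a positive, self\nobreakdash-adjoint, invertible Fourier multiplier with $\Vert\Lambda u\Vert_{L^2(G)}=\Vert u\Vert_{H^{m/2,\mathcal{L}}(G)}$. Writing $v=\Lambda u$ and using self\nobreakdash-adjointness of $\Lambda^{-1}$ gives $\mathsf{Re}(a(x,D)u,u)=\mathsf{Re}(a_0(x,D)v,v)$ with $a_0(x,D):=\Lambda^{-1}a(x,D)\Lambda^{-1}\in\Psi^{0,\mathcal{L}}_{\rho,\delta}$. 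Since $\Lambda^{-1}$ has the $x$\nobreakdash-independent symbol $\widehat{\mathcal{M}}(\xi)^{-m/2}$, the composition and adjoint expansions of Theorem~\ref{calculus} give $a_0(x,D)+a_0(x,D)^{*}=2\Op(B_1)+F_1$, where $B_1(x,\xi):=\widehat{\mathcal{M}}(\xi)^{-m/2}A(x,\xi)\widehat{\mathcal{M}}(\xi)^{-m/2}\in S^{0,\mathcal{L}}_{\rho,\delta}(G\times\widehat{G})$ is self\nobreakdash-adjoint and $F_1\in\Psi^{-(\rho-\delta),\mathcal{L}}_{\rho,\delta}$ (all calculus remainders carry the gain $\langle\xi\rangle^{-(\rho-\delta)}$, those produced by the $x$\nobreakdash-independent factors being one order better). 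The hypothesis $A\geq C\widehat{\mathcal{M}}^{m}$ together with the automatic bound $\Vert B_1(x,\xi)\Vert_{\textnormal{op}}\leq C'$ for a symbol of order $0$ yields $C\,I_{d_\xi}\leq B_1(x,\xi)\leq C'\,I_{d_\xi}$ for all $(x,[\xi])\in G\times\widehat{G}$.

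The crux is then to take the matrix square root $b(x,\xi):=\big(B_1(x,\xi)-\tfrac{C}{2}I_{d_\xi}\big)^{1/2}$, which is well defined since $\tfrac{C}{2}I_{d_\xi}\leq B_1-\tfrac{C}{2}I_{d_\xi}\leq C'I_{d_\xi}$, and to show that $b\in S^{0,\mathcal{L}}_{\rho,\delta}(G\times\widehat{G})$. I would obtain this from the integral representation $b=\tfrac{1}{\pi}\int_0^\infty t^{-1/2}(tI+M)^{-1}M\,dt$ with $M:=B_1-\tfrac{C}{2}I$, differentiating under the integral: the derivatives $\partial_X^{(\beta)}$ and the difference operators $\Delta_\xi^{\gamma}$ are moved onto the resolvents via $\partial(tI+M)^{-1}=-(tI+M)^{-1}(\partial M)(tI+M)^{-1}$ together with the Leibniz rule of Proposition~\ref{Leibnizrule}, and the uniform bound $\Vert(tI+M)^{-1}\Vert_{\textnormal{op}}\leq(t+\tfrac{C}{2})^{-1}$ combined with $M\in S^{0,\mathcal{L}}_{\rho,\delta}$ makes the $t$\nobreakdash-integrals absolutely convergent and delivers $\Vert\partial_X^{(\beta)}\Delta_\xi^{\gamma}b(x,\xi)\Vert_{\textnormal{op}}\leq C_{\beta,\gamma}\langle\xi\rangle^{-\rho|\gamma|+\delta|\beta|}$, and likewise for the right\nobreakdash-handed estimates. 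Setting $Q:=\Op(b)$, the composition and adjoint formulae then give $Q^{*}Q=\Op(B_1)-\tfrac{C}{2}I+E'$ with $E'\in\Psi^{-(\rho-\delta),\mathcal{L}}_{\rho,\delta}$.

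Finally I would assemble the estimate. Combining the two previous paragraphs, $a_0(x,D)+a_0(x,D)^{*}=2Q^{*}Q+C\,I+(F_1-2E')$; since $\Vert Qv\Vert_{L^2(G)}^2\geq0$ and any operator in $\Psi^{-(\rho-\delta),\mathcal{L}}_{\rho,\delta}$ maps $H^{-(\rho-\delta)/2,\mathcal{L}}(G)$ boundedly into $H^{(\rho-\delta)/2,\mathcal{L}}(G)$, so that $|((F_1-2E')v,v)|\leq C_0'\Vert v\Vert_{H^{-(\rho-\delta)/2,\mathcal{L}}(G)}^{2}$ by the $L^2$\nobreakdash-duality of these spaces, one gets
\[
\mathsf{Re}(a_0(x,D)v,v)\geq\tfrac{C}{2}\Vert v\Vert_{L^2(G)}^{2}-C_0\Vert v\Vert_{H^{-(\rho-\delta)/2,\mathcal{L}}(G)}^{2}.
\]
Undoing $v=\Lambda u$ (with $\Lambda=(1+\mathcal{L})^{m/4}$ exactly) gives $\Vert v\Vert_{L^2(G)}=\Vert u\Vert_{H^{m/2,\mathcal{L}}(G)}$ and $\Vert v\Vert_{H^{-(\rho-\delta)/2,\mathcal{L}}(G)}=\Vert u\Vert_{H^{(m-(\rho-\delta))/2,\mathcal{L}}(G)}$, hence $\mathsf{Re}(a(x,D)u,u)\geq\tfrac{C}{2}\Vert u\Vert_{H^{m/2,\mathcal{L}}(G)}^{2}-C_0\Vert u\Vert_{H^{(m-(\rho-\delta))/2,\mathcal{L}}(G)}^{2}$. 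As $(m-(\rho-\delta))/2<m/2$, interpolation between $L^2(G)$ and $H^{m/2,\mathcal{L}}(G)$ yields $\Vert u\Vert_{H^{(m-(\rho-\delta))/2,\mathcal{L}}(G)}^{2}\leq\varepsilon\Vert u\Vert_{H^{m/2,\mathcal{L}}(G)}^{2}+C_\varepsilon\Vert u\Vert_{L^2(G)}^{2}$ for every $\varepsilon>0$ (trivially when the lower exponent is $\leq0$); taking $\varepsilon=C/(4C_0)$ gives $\mathsf{Re}(a(x,D)u,u)\geq\tfrac{C}{4}\Vert u\Vert_{H^{m/2,\mathcal{L}}(G)}^{2}-C_2\Vert u\Vert_{L^2(G)}^{2}$, which is the claim (with $C_1=C/4$, and $\Vert\cdot\Vert_{L^{2,\mathcal{L}}_{m/2}(G)}=\Vert\cdot\Vert_{H^{m/2,\mathcal{L}}(G)}$).

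I expect the square\nobreakdash-root step of the second paragraph to be the main obstacle: one must propagate the non\nobreakdash-commuting difference operators $\Delta_\xi^{\gamma}$ (and the vector\nobreakdash-field derivatives $\partial_X^{(\beta)}$) through the resolvent integral without losing decay, which is precisely what forces the full Leibniz expansion and what makes the two\nobreakdash-sided spectral bounds $C\,I\leq B_1\leq C'\,I$ indispensable for the convergence of the $t$\nobreakdash-integrals; this is carried out in detail in \cite{CardonaRuzhanskyC} (see also \cite{SharpGarding}). Everything else is routine symbol bookkeeping using Theorem~\ref{calculus}, the subelliptic $L^2$\nobreakdash- and Sobolev\nobreakdash-boundedness, and elementary interpolation. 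As an aside, once the subelliptic sharp G\r{a}rding inequality is available, the second and third paragraphs can be shortened to one line by applying it to the non\nobreakdash-negative symbol $A-\tfrac{C}{2}\widehat{\mathcal{M}}^{m}\in S^{m,\mathcal{L}}_{\rho,\delta}(G\times\widehat{G})$ and absorbing the lower\nobreakdash-order remainder as above; but since the sharp inequality is the deeper statement, proving the plain G\r{a}rding inequality directly as outlined seems preferable.
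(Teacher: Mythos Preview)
The paper does not actually prove Theorem~\ref{GardinTheorem}: it is only stated there, with the proof deferred to the references \cite{CardonaRuzhanskyC,SharpGarding}. So there is no ``paper's own proof'' to compare against.

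That said, your argument is correct and is precisely the classical square\nobreakdash-root scheme underlying the proof in \cite{CardonaRuzhanskyC}. The reduction to order zero via $\Lambda=(1+\mathcal{L})^{m/4}$, the identification of the principal part $B_1=\widehat{\mathcal{M}}^{-m/2}A\,\widehat{\mathcal{M}}^{-m/2}$ with a self\nobreakdash-adjoint order\nobreakdash-zero symbol satisfying two\nobreakdash-sided bounds $CI\leq B_1\leq C'I$, the construction of the square root $b=(B_1-\tfrac{C}{2}I)^{1/2}$ via the resolvent integral, and the final interpolation step are all sound. You have correctly isolated the only nontrivial point: verifying that $b\in S^{0,\mathcal{L}}_{\rho,\delta}$ by propagating the non\nobreakdash-commuting difference operators $\Delta_\xi^\gamma$ through the integral $\tfrac{1}{\pi}\int_0^\infty t^{-1/2}(tI+M)^{-1}M\,dt$ using Proposition~\ref{Leibnizrule} and the uniform resolvent bound $\Vert(tI+M)^{-1}\Vert_{\textnormal{op}}\leq(t+C/2)^{-1}$. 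This is indeed the substantive computation, and it is carried out in the cited reference. Your closing remark that the sharp G\r{a}rding inequality (Theorem~\ref{MainTheorem}) applied to $A-\tfrac{C}{2}\widehat{\mathcal{M}}^m\geq\tfrac{C}{2}\widehat{\mathcal{M}}^m\geq0$ gives a one\nobreakdash-line alternative is also correct, with the same caveat you note about logical dependence.
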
Theorem \ref{GardinTheorem} motivates to define the  sub-family of $\mathcal{L}$-elliptic operators, namely, the class of strongly $\mathcal{L}$-elliptic operators.
\begin{definition}[Strongly $\mathcal{L}$-elliptic operators] We will say that an operator $A\in \Psi^{m,\mathcal{L}}_{\rho,\delta}(G\times\widehat{G})$ with $0\leq \delta<\rho\leq 1,$ is strongly  $\mathcal{L}$-elliptic or order $m$ if there exists $C>0,$ such that 
\begin{equation}
     \mathsf{Re}(a(x,\xi))=\frac{1}{2}(a(x,\xi)+a(x,\xi)^{*})\geqslant C\widehat{\mathcal{M}}(\xi)^{m},
\end{equation}for all $(x,[\xi])\in G\times \widehat{G}.$
\end{definition}
Now, we are going to present the corresponding lower bound for subelliptic operators with non-negative symbols on any representation space, as in \cite{SharpGarding}.
\begin{theorem}[Subelliptic sharp G\r{a}rding inequality]\label{MainTheorem}  For $0< \rho\leqslant 1$ and  $0\leq \delta<(2\kappa-1)^{-1}\rho$, and for $m\in \mathbb{R}$, let  $$A\equiv a(x,D):C^\infty(G)\rightarrow\mathscr{D}'(G)$$ be a pseudo-differential operator with global symbol  $a\in {S}^{m,\,\mathcal{L}}_{\rho,\delta}( G\times \widehat{G})$. Then, if $a(x,[\xi])\geq 0$ for all $(x,[\xi])\in G\times \widehat{G}$, there exists a positive constant $C$ such that
\begin{equation}\label{SGIT}
    \mathsf{Re}(Au,u)\geq -C\Vert u\Vert_{H^{\frac{m-\varkappa}{2},\,\mathcal{L}}(G)}^2,\quad \varkappa  ={\rho}/{\kappa}-\left(2-{1}/{\kappa}\right)\delta,
\end{equation}
for all $u\in C^{\infty}(G).$
\end{theorem}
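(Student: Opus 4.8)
The plan is to prove Theorem~\ref{MainTheorem} by a Friedrichs symmetrisation argument, adapted to the quantisation \eqref{RuzhanskyTurunenQuanti} and to the subelliptic scale $\widehat{\mathcal M}(\xi)$. Given $A=\textnormal{Op}(a)$ with $a\in S^{m,\mathcal L}_{\rho,\delta}(G\times\widehat G)$ and $a(x,[\xi])\ge 0$ on every representation space, I would attach to $A$ an auxiliary operator $A^{+}$ with two properties: \emph{(i)} $A^{+}$ is non-negative on $L^{2}(G)$, i.e.\ $(A^{+}u,u)\ge 0$ for every $u\in C^{\infty}(G)$; and \emph{(ii)} $A-A^{+}\in\Psi^{m-\varkappa,\mathcal L}_{\rho,\delta}$, where $\varkappa=\rho/\kappa-(2-1/\kappa)\delta$. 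Granting \emph{(i)}--\emph{(ii)}, the theorem follows at once: since $\mathsf{Re}(Au,u)=\mathsf{Re}(A^{+}u,u)+\mathsf{Re}((A-A^{+})u,u)$ with the first term non-negative, the subelliptic Sobolev boundedness of the calculus gives $A-A^{+}\colon H^{(m-\varkappa)/2,\mathcal L}(G)\to H^{-(m-\varkappa)/2,\mathcal L}(G)$ bounded, hence
\begin{equation*}
 \big|\mathsf{Re}\big((A-A^{+})u,u\big)\big|\le \|(A-A^{+})u\|_{H^{-(m-\varkappa)/2,\mathcal L}(G)}\,\|u\|_{H^{(m-\varkappa)/2,\mathcal L}(G)}\le C\,\|u\|^{2}_{H^{(m-\varkappa)/2,\mathcal L}(G)},
\end{equation*}
which is exactly \eqref{SGIT}. (One could first conjugate $A$ by powers of $\mathcal M=(1+\mathcal L)^{1/2}$ to reduce to a convenient value of $m$, the lower-order commutator terms being absorbed into the error; but the construction below works directly for any $m\in\mathbb R$.)

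The construction of $A^{+}$ is the heart of the matter. I would build ``coherent states'' out of the matrix coefficients $\xi(x)_{ij}$ and a mollifier on $G$ contracted according to the subelliptic weight. Fix $\phi\in C^{\infty}_{0}(G)$ supported near $e_{G}$ with $\int_{G}|\phi|^{2}=1$; for each $[\xi]\in\widehat G$ let $\phi_{\xi}$ be a rescaling of $\phi$ concentrated on a Carnot--Carath\'eodory ball around $e_{G}$ whose radius decays like a power of $\widehat{\mathcal M}(\xi)^{-1}$, the exponent chosen in terms of $\rho$ and the step $\kappa$ so as to balance the Taylor remainder against the symbol gain below. For $z\in G$ set $u_{z,\xi}(x):=\phi_{\xi}(z^{-1}x)\,u(x)$ and define
\begin{equation*}
 (A^{+}u,u):=\sum_{[\xi]\in\widehat G}d_{\xi}\int_{G}\textnormal{Tr}\big(\widehat{u_{z,\xi}}(\xi)^{*}\,a(z,\xi)\,\widehat{u_{z,\xi}}(\xi)\big)\,dz,
\end{equation*}
which is manifestly $\ge 0$ because each $a(z,\xi)$ is a non-negative $d_{\xi}\times d_{\xi}$ matrix. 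One then identifies the corresponding operator $A^{+}=\textnormal{Op}(a^{+})$, computes that its symbol $a^{+}$ is an average (a ``smearing'') of $a$ against the window $\phi_{\xi}$, and checks, using the admissibility of the chosen difference operators and the decay estimates of Lemma~\ref{lemadecaying1}, that $a^{+}\in S^{m,\mathcal L}_{\rho,\delta}(G\times\widehat G)$ and that $A^{+}$ is continuous from $C^{\infty}(G)$ to $\mathscr D'(G)$.

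The main obstacle is the remainder estimate \emph{(ii)}. Here one Taylor-expands $a(z,\xi)$ about $z=x$ via the subelliptic Taylor expansion of Lemma~\ref{Taylorseries}, integrates the window $\phi_{\xi}$ against the resulting monomials $q_{(1)}^{\alpha_{1}}\cdots q_{(n)}^{\alpha_{n}}$, and estimates the error terms using the Leibniz rule for difference operators (Proposition~\ref{Leibnizrule}), Lemma~\ref{lemadecaying1}, and the defining inequalities \eqref{InIC}--\eqref{InIIC}. The zeroth-order term reproduces $a$ modulo a symbol of strictly lower order, and a careful count shows that the leading genuine error has order $m-\varkappa$; the shift $\varkappa=\rho/\kappa-(2-1/\kappa)\delta$ records the interplay of the factor $\widehat{\mathcal M}(\xi)^{-\rho}$ gained by each frequency-difference $\Delta_{\xi}$, the factor $\widehat{\mathcal M}(\xi)^{\delta|\beta|}$ lost by each spatial derivative $\partial_{X}^{(\beta)}$ falling on the window, and the step $\kappa$, which through the ball--box theorem governs the size of the control ball supporting $\phi_{\xi}$ (a ball of control-radius $r$ containing only a Euclidean ball of radius $\sim r^{\kappa}$). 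The hypothesis $0\le\delta<(2\kappa-1)^{-1}\rho$ is exactly equivalent to $\varkappa>0$, and it is precisely what forces $A-A^{+}$ to have order strictly below $m$, so that the asymptotic expansion converges within the classes $S^{\bullet,\mathcal L}_{\rho,\delta}$ and its tail lies in $\Psi^{-\infty}$. I expect this step --- making the smeared symbol and the remainder expansion rigorous in the matrix-valued, discrete-dual setting, with the sharp loss $\varkappa$ --- to be the only genuinely difficult part; once it is in place, the assembly described in the first paragraph is routine bookkeeping with the calculus of Theorem~\ref{calculus}.
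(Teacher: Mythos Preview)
The paper does not contain a proof of this theorem. Theorem~\ref{MainTheorem} is stated in the preliminaries section as a known result, with the explicit attribution ``as in \cite{SharpGarding}'' (Cardona, Federico, Ruzhansky, \emph{Subelliptic sharp G\r{a}rding inequality on compact Lie groups}, arXiv:2110.00838). It is then used as a black box in the proof of Proposition~\ref{energyestimate}. So there is nothing in the present paper to compare your proposal against.

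That said, your outline is the standard Friedrichs/anti-Wick symmetrisation strategy for sharp G\r{a}rding inequalities, adapted to the matrix-valued quantisation and the subelliptic weight, and this is indeed the approach taken in the cited reference \cite{SharpGarding}. The architecture you describe --- build a manifestly non-negative $A^{+}$ by averaging the symbol against a window localised at the subelliptic scale, then show $A-A^{+}\in\Psi^{m-\varkappa,\mathcal L}_{\rho,\delta}$ --- is correct in spirit, and your identification of the remainder estimate as the crux is accurate. One point to flag: in the actual proof the ``window'' is built from amplitude operators (as in Definition of subelliptic amplitudes in the present paper) rather than from a literal CC-ball cutoff, and the loss $\varkappa$ emerges from a careful count in the amplitude-to-symbol reduction combined with the comparison $\langle\xi\rangle\lesssim\widehat{\mathcal M}(\xi)\lesssim\langle\xi\rangle^{\kappa}$ between the elliptic and subelliptic weights; your heuristic via ball--box is morally the same mechanism but would need to be made precise at the level of symbol estimates. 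Your sketch is a reasonable blueprint, but the details you flag as ``the only genuinely difficult part'' are substantial and occupy the bulk of \cite{SharpGarding}.
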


\section{Analysis of diffusion problems on compact Lie groups}\label{GST}
Let $T>0.$ In this section we will study the regularity, existence and uniqueness  for the  Cauchy problem 
\begin{equation}\label{PVI}(\textnormal{IVP}): \begin{cases}\frac{\partial v}{\partial t}=K(t)v+f ,& \text{ }v\in \mathscr{D}'((0,T)\times G)
\\v(0)=u_0 ,& \text{ } \end{cases}
\end{equation}   associated to a (possibly) time-dependent  continuous family of pseudo-differential operators $K(t).$ According to the literature, we start by considering the general case where  $K(t)\in \Psi^{m,\mathcal{L}}_{\rho,\delta}(G\times \widehat{G})$ belongs to the subelliptic H\"ormander classes on $G,$  with the initial data $u_0\in H^{s,\mathcal{L}}(G),$ and  $f\in  L^2([0,T],H^{s,\mathcal{L}}(G)) ,$ where  $m>0,$ and $s\in \mathbb{R}.$ 

The generality of this consideration is justified by the consideration of  (possibly) continuous time-dependent family of subellitic pseudo-differential operators $K(t)$ on $G$ with a fixed positive order $m$.  We split our analysis in two cases.
\begin{itemize}
    \item Case 1. We consider that a family of continuous time-dependent subellitic pseudo-differential operators $K(t)$ is strongly $\mathcal{L}$-elliptic, that is, for any $0\leqslant t\leqslant T,$ the estimate
    \begin{equation}
  \sigma_{ \mathsf{Re}(-K(t))}(x,[\xi])\geq \widehat{\mathcal{M}}(\xi)^m,\,\,(x,[\xi])\in G\times \widehat{G},
\end{equation} holds on any representation space.  In this case, the G\r{a}rding inequality for the subelliptic classes $$\Psi^{m,\mathcal{L}}_{\rho,\delta}(G\times \widehat{G})$$ allows us  to consider a general non-negative order  $m>0.$
    \item Case 2. The time-dependent family of symbols associated to the family of continuous time-dependent subellitic pseudo-differential operators $K(t)$ is negative definite on every representation space and for any $0\leqslant t\leqslant T,$ that is,
    \begin{equation}
   \sigma_{ \mathsf{Re}(-K(t))}(x,[\xi])\geq 0,\,\,(x,[\xi])\in G\times \widehat{G}.
\end{equation}In this case, the sharp-G\r{a}rding inequality  for the subelliptic classes $$\Psi^{m,\mathcal{L}}_{\rho,\delta}(G\times \widehat{G})$$ allows us  to analyse the strictly fractional case 
\begin{equation}\label{fractional:index}
    0\leq m\leq  \varkappa  ={\rho}/{\kappa}-\left(2-{1}/{\kappa}\right)\delta.
\end{equation}
In the case where the family of operators $K(t)$ is contained in the elliptic H\"ormander classes $\Psi^{m}_{\rho,\delta}(G\times \widehat{G}),$ the order restriction becomes to be $$0\leq m\leq \rho-\delta,$$ for all $0\leq \delta<\rho\leq 1,$ which recaptures \eqref{fractional:index} for $\kappa=1.$  
    
\end{itemize}

The next subsection will be dedicated to stating the technical results in the analysis of the well-posedness of \eqref{PVI}.
\subsection{Preliminary results} One of the facts of central difference between the subelliptic pseudo-differential calculus in \cite{CardonaRuzhanskyC}, and then of the elliptic calculus in \cite{Ruz} with the H\"ormander calculus via local coordinate systems is that, in general, the order of the commutator of two operators in both calculus does not decrease, which is a natural consequence of the non-commutativity of the products of arbitrary matrices. 

This point makes  the techniques under which the theory of initial value problems of pseudo-differential operators in Taylor \cite{Taylorbook1981} based on commutator techniques (and several of the argument there involving commutators of pseudo-differential operators) inoperable in the setting of the subelliptic classes $\Psi^{m,\mathcal{L}}_{\rho,\delta}(G\times \widehat{G}).$ The following lemma will be useful in the proof of the well posedness of  \eqref{PVI}  without the use of commutator techniques.
\begin{lemma}\label{lemenr1} Let $A\in \Psi^{\nu,\mathcal{L}}_{\rho,\delta}(G\times \widehat{G})$ be a $\mathcal{L}$-elliptic operator for some  $\nu\in\mathbb{R}$. 
Then,  for all $s\in\mathbb{R}$ there exists a constant $C>0$ such that
$$\|A\mathcal{M}_sv\|_{L^2(G)}\leq C[\|Av\|_{H^{s,\mathcal{L}}(G)}+\|v\|_{H^{s,\mathcal{L}}(G)}],$$
for all $v\in H^{s,\mathcal{L}}(G)$. Here $\mathcal{M}_s:=(1+\mathcal{L})^{\frac{s}{2}}$ is the subelliptic Bessel potential of order $s\in \mathbb{R}.$
\end{lemma}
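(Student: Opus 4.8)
I would prove Lemma~\ref{lemenr1} by sidestepping commutators altogether and using instead an elliptic parametrix together with the composition rule and the Sobolev mapping properties of the subelliptic calculus recorded above. The plan is as follows. Since $A\in\Psi^{\nu,\mathcal L}_{\rho,\delta}(G\times\widehat G)$ is $\mathcal L$-elliptic and $\delta<\rho$, the symbolic calculus of Theorem~\ref{calculus} yields (by the usual iterative construction on the composition formula) a parametrix $B\in\Psi^{-\nu,\mathcal L}_{\rho,\delta}(G\times\widehat G)$ with
\[
BA=I+R,\qquad R:\mathscr D'(G)\to C^\infty(G)\ \text{ smoothing.}
\]
Rewriting this as $I=BA-R$, applying it to $v$, and then acting with $A\mathcal M_s$ gives the key identity
\[
A\mathcal M_s v=(A\mathcal M_s B)(Av)-(A\mathcal M_s R)v ,
\]
so it suffices to estimate the two terms on the right separately.

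For the first term I would note that $\mathcal M_s=(1+\mathcal L)^{s/2}$ has the diagonal symbol $\widehat{\mathcal M}(\xi)^s$ and therefore lies in $\Psi^{s,\mathcal L}_{1,0}(G\times\widehat G)\subset\Psi^{s,\mathcal L}_{\rho,\delta}(G\times\widehat G)$; by the composition part of Theorem~\ref{calculus} the operator $A\mathcal M_s B$ then belongs to $\Psi^{\nu+s-\nu,\mathcal L}_{\rho,\delta}(G\times\widehat G)=\Psi^{s,\mathcal L}_{\rho,\delta}(G\times\widehat G)$, hence by the boundedness of subelliptic operators on subelliptic Sobolev spaces it maps $H^{s,\mathcal L}(G)$ continuously into $H^{0,\mathcal L}(G)=L^2(G)$. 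Applying this to $w=Av$ gives $\|(A\mathcal M_s B)(Av)\|_{L^2(G)}\le C\|Av\|_{H^{s,\mathcal L}(G)}$. For the second term, $A\mathcal M_s R$ is again a smoothing operator (composition of anything with a smoothing operator is smoothing), so in particular it maps $H^{s,\mathcal L}(G)$ boundedly into $L^2(G)$, whence $\|(A\mathcal M_s R)v\|_{L^2(G)}\le C\|v\|_{H^{s,\mathcal L}(G)}$. Combining the two bounds with the triangle inequality in the key identity yields
\[
\|A\mathcal M_s v\|_{L^2(G)}\le C\bigl(\|Av\|_{H^{s,\mathcal L}(G)}+\|v\|_{H^{s,\mathcal L}(G)}\bigr),
\]
which is the assertion. (When $\nu>0$ this should be read, as usual, with the understanding that finiteness of the right-hand side forces $v\in H^{s+\nu,\mathcal L}(G)$ by elliptic regularity, so both sides are then finite.)

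The step requiring the most care is the very first one: one must know that an $\mathcal L$-elliptic operator of the subelliptic H\"ormander calculus admits a parametrix within the \emph{same} class $\Psi^{\bullet,\mathcal L}_{\rho,\delta}(G\times\widehat G)$ — this is where the hypothesis $\delta<\rho$ enters — and that the remainder $R$ is genuinely smoothing, so that $A\mathcal M_s R$ can be absorbed into the $\|v\|_{H^{s,\mathcal L}(G)}$ term. Everything after that is purely a bookkeeping of orders via the composition formula of Theorem~\ref{calculus} and the Sobolev boundedness already established; crucially, no commutator of pseudo-differential operators is ever formed, which is precisely the obstruction (the non-decrease of the order of commutators in the matrix-valued calculus) that this lemma is meant to circumvent.
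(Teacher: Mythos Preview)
Your proof is correct and rests on the same three ingredients as the paper's argument—the existence of a parametrix in the subelliptic calculus, the composition rule of Theorem~\ref{calculus}, and the Sobolev mapping property—but the organisation differs. You build a parametrix $B$ for $A$ itself and write the operator identity $A\mathcal M_s=(A\mathcal M_sB)A-A\mathcal M_sR$, then estimate each piece directly. The paper instead constructs a parametrix $T$ for the composition $\mathcal M_sA$ and proceeds in two stages: first $\|A\mathcal M_sv\|_{L^2}\le C\|v\|_{H^{\nu+s,\mathcal L}}$ by the Sobolev boundedness of $A\mathcal M_s\in\Psi^{\nu+s,\mathcal L}_{\rho,\delta}$, and then the elliptic estimate $\|v\|_{H^{\nu+s,\mathcal L}}\le C(\|\mathcal M_sAv\|_{L^2}+\|v\|_{H^{s,\mathcal L}})$ obtained from $T\mathcal M_sA=I+R$. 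Your route is slightly more direct, since it avoids the intermediate passage through $H^{\nu+s,\mathcal L}$; the paper's route, on the other hand, isolates the elliptic regularity statement $\|v\|_{H^{\nu+s,\mathcal L}}\lesssim\|Av\|_{H^{s,\mathcal L}}+\|v\|_{H^{s,\mathcal L}}$ as an explicit byproduct. Both avoid commutators, which is the point of the lemma.
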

\begin{proof} Since $A\mathcal{M}_s\in \Psi_{\rho,\delta}^{\nu+s,\mathcal{L}}(G\times \widehat{G}),$ in view of  its Sobolev boundedness,  there exists a constant $C_1>0$ such that 
\begin{equation}\label{rt67}
    \|A\mathcal{M}_sv\|_{L^2(G)}\leq C_1\|v\|_{H^{\nu+s,\mathcal{L}}(G)}
\end{equation}
for all $v\in H^{\nu+s,\mathcal{L}}(G)$. 

By the $\mathcal{L}$-ellipticity of   $\mathcal{M}_sA$, there exists a subelliptic parametrix $T\in \Psi_{\rho,\delta}^{-(\nu+s),\mathcal{L}}(G\times \widehat{G}) $ of  
 $\mathcal{M}_sA$. Then, there exists a constant $C_2>0$ such that   
\begin{equation}\label{bouelT}
    \|Tu\|_{H^{\nu+s,\mathcal{L}}(G)}\leq C_2\|u\|_{L^2(G)},\,\,\textnormal{ for all }u\in L^2(G).
\end{equation}

On the other hand, let $R=R(x,D)\in \Psi_{\rho,\delta}^{-\infty,\mathcal{L}}(G\times\widehat{G})$ be such that 
 $$T\mathcal{M}_sA=I+R(x,D),$$
and let $v\in  H^{\nu+s,\mathcal{L}}(G)$ and $u=\mathcal{M}_sAv$. Then $u\in L^2(G)$ and 
$$Tu=T\mathcal{M}_sAv=v+R(x,D)v.$$
Hence and by \eqref{bouelT} we have
$$\|v\|_{H^{\nu+s,\mathcal{L}}(G)}-\|R(x,D)v\|_{H^{\nu+s,\mathcal{L}}(G)}\leq \|v+R(x,D)v\|_{H^{\nu+s,\mathcal{L}}(G)}=\Vert T\mathcal{M}_sAu\Vert_{H^{\nu+s,\mathcal{L}}(G)}$$
$$\leq C_2\|\mathcal{M}_sAv\|_{L^2(G)}.$$
Thus
$$\|v\|_{H^{\nu+s,\mathcal{L}}(G)}\leq C_2\|\mathcal{M}_sAv\|_{L^2(G)}+\|R(x,D)v\|_{H^{\nu+s,\mathcal{L}}(G)}.$$
Now, since $R(x,D)$ belongs in particular to $\Psi_{\rho,\delta}^{-\nu,\mathcal{L}}(G\times \widehat{G})$, we have  
$$\|R(x,D)v\|_{H^{\nu+s,\mathcal{L}}(G)}\leq C_3\|v\|_{H^{s,\mathcal{L}}(G)}.$$
Then 
\begin{equation}\label{utop0}
  \|v\|_{H^{\nu+s,\mathcal{L}}(G)}\leq C_2\|\mathcal{M}_sAv\|_{L^2(G)}+\|v\|_{H^{s,\mathcal{L}}(G)}.  
\end{equation}

Therefore, by \eqref{rt67} and \eqref{utop0} we obtain
$$\|A\mathcal{M}_sv\|_{L^2(G)}\leq C(\|\mathcal{M}_sAv\|_{L^2(G)}+\|v\|_{H^{s}(G)}),$$
which concludes the proof of the lemma.
\end{proof}
In the following lemma we prove an energy identity that allows us to apply the lower bounds for the G\r{a}rding inequality and the sharp G\r{a}rding inequality in the form of  Theorems \ref{GardinTheorem} and \ref{MainTheorem}.

\begin{lemma}\label{Lemma:Energy} Let $G$ be a compact  Lie group. Let us assume that $f\in C([0,T],\mathscr{D}(G))$ and $v\in C^{1}([0,T],\mathscr{D}(G))$  are distributions such that 
$$ \frac{\partial v}{\partial t}=K(t)v+f  $$
where, for any $t\in [0,T],$ $K(t):C([0,T],\mathscr{D}(G))\rightarrow C([0,T],\mathscr{D}(G))$ is a continuous linear operator with respect to  the standard Fr\'echet structure on  $C([0,T],\mathscr{D}(G)).$ Then, 
\begin{equation}\label{Ide:Eq}
    \frac{d}{dt}\Vert v(t) \Vert^2_{L^2(G)}=2 \mathsf{Re}(K(t)v(t),v(t))_{L^2(G)}+2 \mathsf{Re}(f(t), v(t))_{L^2(G)},
\end{equation}for any $t\in [0,T].$
\end{lemma}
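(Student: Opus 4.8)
The plan is to verify the energy identity \eqref{Ide:Eq} by a direct differentiation of the squared $L^2$-norm along the trajectory $t\mapsto v(t)$. Since $v\in C^1([0,T],\mathscr{D}(G))$, the map $t\mapsto \Vert v(t)\Vert_{L^2(G)}^2 = (v(t),v(t))_{L^2(G)}$ is differentiable, and by bilinearity of the real inner product together with the product rule we have
\begin{equation*}
\frac{d}{dt}\Vert v(t)\Vert_{L^2(G)}^2 = \left(\frac{\partial v}{\partial t}(t), v(t)\right)_{L^2(G)} + \left(v(t), \frac{\partial v}{\partial t}(t)\right)_{L^2(G)} = 2\,\mathsf{Re}\left(\frac{\partial v}{\partial t}(t), v(t)\right)_{L^2(G)}.
\end{equation*}
The only point requiring a word of justification here is the interchange of $\frac{d}{dt}$ with the $L^2$-pairing: because $v\in C^1([0,T],\mathscr{D}(G))$ and $\mathscr{D}(G)=C^\infty(G)$ embeds continuously into $L^2(G)$ on the compact group $G$, the difference quotients $\tfrac{1}{h}(v(t+h)-v(t))$ converge to $\frac{\partial v}{\partial t}(t)$ in $C^\infty(G)$ and hence in $L^2(G)$, so the standard argument (adding and subtracting a cross term, then using continuity of the inner product) applies verbatim.

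Next I would substitute the evolution equation $\frac{\partial v}{\partial t}=K(t)v+f$ into the expression above. This gives
\begin{equation*}
\frac{d}{dt}\Vert v(t)\Vert_{L^2(G)}^2 = 2\,\mathsf{Re}\big(K(t)v(t)+f(t),\, v(t)\big)_{L^2(G)} = 2\,\mathsf{Re}(K(t)v(t),v(t))_{L^2(G)} + 2\,\mathsf{Re}(f(t),v(t))_{L^2(G)},
\end{equation*}
using linearity of the inner product in its first slot and of $\mathsf{Re}$. All the pairings are finite since $v(t), f(t), K(t)v(t)\in C^\infty(G)\subset L^2(G)$ for every $t$ (here one uses that $K(t)$ maps $C([0,T],\mathscr{D}(G))$ into itself, as assumed). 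This is exactly \eqref{Ide:Eq}, and the proof is complete.

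There is no serious obstacle in this lemma; it is a soft computation. The only mild subtlety — and the step I would be most careful to spell out — is legitimising $\frac{d}{dt}(v(t),v(t))_{L^2} = (v'(t),v(t)) + (v(t),v'(t))$ from the $C^1$-in-$\mathscr{D}(G)$ hypothesis. I would phrase it as: write $\Vert v(t+h)\Vert^2 - \Vert v(t)\Vert^2 = (v(t+h)-v(t),v(t+h)) + (v(t),v(t+h)-v(t))$, divide by $h$, and pass to the limit $h\to 0$ using that $\tfrac{1}{h}(v(t+h)-v(t))\to v'(t)$ in $L^2(G)$ and $v(t+h)\to v(t)$ in $L^2(G)$ (both inherited from convergence in $\mathscr{D}(G)$), together with joint continuity of the $L^2$ inner product. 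Everything else is bookkeeping with real parts and linearity.
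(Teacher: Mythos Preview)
Your proof is correct and follows essentially the same route as the paper's: differentiate $\Vert v(t)\Vert_{L^2}^2=(v(t),v(t))$ via the product rule, then substitute $\partial_t v=K(t)v+f$ and collect terms into real parts. If anything, you are more careful than the paper in justifying the differentiation step through difference quotients and the continuous embedding $\mathscr{D}(G)\hookrightarrow L^2(G)$.
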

\begin{proof}Indeed, 
observe that
\begin{align*}
   \frac{d}{dt}\Vert v(t) \Vert^2_{L^2(G)}&= \frac{d}{dt}\left(v(t),v(t)\right)_{L^2(G)}=\left(\frac{d v(t)}{dt},v(t)\right)_{L^2(G)}+\left(v(t),\frac{d v(t)}{dt}\right)_{L^2(G)}\\
   &=\left(K(t)v(t)+f(t),v(t)\right)_{L^2(G)}+\left(v(t),K(t)v(t)+f(t)\right)_{L^2(G)}\\
    &=\left((K(t)+K(t)^{*})v(t),v(t)\right)_{L^2(G)}+2 \mathsf{Re}(f(t), v(t))_{L^2(G)}\\
     &=2 \mathsf{Re}(K(t)v(t),v(t))_{L^2(G)}+2 \mathsf{Re}(f(t), v(t))_{L^2(G)},
\end{align*}for any $t\in [0,T].$ The proof of the identity  \eqref{Ide:Eq} is complete.
\end{proof}
The next lemma proves that the energy estimates on subelliptic Sobolev spaces imply the existence and uniqueness of the IVP problem \eqref{PVI:2} via the Hanh-Banach theorem.
\begin{lemma}\label{Hanh:Banach:Lemma}
Let $s\in \mathbb{R}$ and let $\mathbb{X}=C^1([0,T],H^{-s,\mathcal{L}}(G)).$ Assume that for any $v\in \mathbb{X}$ the following energy estimate
\begin{equation}\label{Condition:on:X}
\Vert v(t) \Vert^2_{H^{-s,\mathcal{L}}(G)}\leqslant   \left(C\Vert v(0) \Vert^2_{H^{-s,\mathcal{L}}(G)}+C'\smallint\limits_{0}^T \Vert (\partial_{t}-K(\tau))v(\tau) \Vert^2_{H^{-s,\mathcal{L}}(G)}d\tau \right),  
\end{equation} holds for every $0\leqslant t\leqslant T,$ where $K(t)\in C([0,T],\Psi^{m,\mathcal{L}}_{\rho,\delta}(G\times \widehat{G})).$ Then, for any    $f\in C([0,T],\mathscr{D}(G))$ and $u_0\in C^{1}([0,T],\mathscr{D}(G))$  there exists a unique $v\in L^2([0,T],H^{s,\mathcal{L}}(G))$ solving the following initial value problem
\begin{equation}\label{PVI:2}(\textnormal{IVP}): \begin{cases}\frac{\partial v}{\partial t}=K(t)v+f ,& \text{ }v\in \mathscr{D}'((0,T)\times G)
\\v(0)=u_0 ,& \text{ } \end{cases}
\end{equation}and  satisfying the energy inequality 
\begin{equation}
\Vert v(t) \Vert^2_{H^{s,\mathcal{L}}(G)}\leqslant   \left(C\Vert u_0 \Vert^2_{H^{s,\mathcal{L}}(G)}+C'\smallint\limits_{0}^T \Vert f(\tau) \Vert^2_{H^{s,\mathcal{L}}(G)}d\tau \right),  
\end{equation} for every $0\leqslant t\leqslant T,$ between the solutions and the initial data.

\end{lemma}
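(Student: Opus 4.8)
The plan is to run a standard duality (Hahn--Banach) argument. Consider the linear subspace $\mathcal{E}\subset \mathbb{X}=C^1([0,T],H^{-s,\mathcal{L}}(G))$ of those $w$ with $w(T)=0$, and on the image of $\mathcal{E}$ under the operator $P^{*}w:=-\partial_t w - K(t)^{*}w$ (the formal adjoint of $P=\partial_t-K(t)$ with respect to the pairing $\int_0^T(\cdot,\cdot)_{L^2(G)}\,dt$), define a linear functional. Specifically, for $f\in C([0,T],\mathscr{D}(G))$ and $u_0\in\mathscr{D}(G)$ I would look for $v$ solving \eqref{PVI:2} weakly, i.e. satisfying
\begin{equation*}
\int_0^T (v(t),P^{*}w(t))_{L^2(G)}\,dt = \int_0^T (f(t),w(t))_{L^2(G)}\,dt + (u_0,w(0))_{L^2(G)}
\end{equation*}
for all $w\in\mathcal{E}$. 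First I would check that this integration-by-parts identity is exactly the weak formulation of \eqref{PVI:2} together with the initial condition $v(0)=u_0$, using that $w(T)=0$ kills the boundary term at $t=T$ while the term at $t=0$ produces $(u_0,w(0))$.

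The key step is that the energy estimate \eqref{Condition:on:X}, applied to the \emph{backward} problem for $P^{*}$, gives a priori control $\Vert w(t)\Vert_{H^{-s,\mathcal{L}}(G)}$ by $\Vert w(T)\Vert_{H^{-s,\mathcal{L}}(G)}$ and $\int\Vert P^{*}w\Vert^2_{H^{-s,\mathcal{L}}(G)}$; when $w\in\mathcal{E}$ the first term vanishes, so $\Vert w(t)\Vert_{H^{-s,\mathcal{L}}(G)}\le C'\int_0^T\Vert P^{*}w(\tau)\Vert^2_{H^{-s,\mathcal{L}}(G)}\,d\tau$ uniformly in $t$. (One must check that the hypothesis \eqref{Condition:on:X}, stated for $P=\partial_t-K(\tau)$, transfers to $P^{*}$: the adjoint of a strongly $\mathcal{L}$-elliptic, resp. sharp-G\r{a}rding-admissible, family is of the same type by Theorem~\ref{calculus}, so the same energy inequality holds for $P^{*}$ after the change of variable $t\mapsto T-t$; this is the point I expect to need the most care.) With this bound in hand, the right-hand side of the weak formulation is estimated: $|\int_0^T(f,w)_{L^2}dt + (u_0,w(0))_{L^2}|$ is controlled, via Cauchy--Schwarz in $H^{s,\mathcal{L}}$--$H^{-s,\mathcal{L}}$ duality and the a priori bound, by $C(\Vert u_0\Vert_{H^{s,\mathcal{L}}(G)}+(\int_0^T\Vert f\Vert^2_{H^{s,\mathcal{L}}(G)}d\tau)^{1/2})\cdot\Vert P^{*}w\Vert_{L^2([0,T],H^{-s,\mathcal{L}}(G))}$. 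Hence the functional $P^{*}w\mapsto \int_0^T(f,w)_{L^2}dt+(u_0,w(0))_{L^2}$ is well defined (injectivity of $P^{*}$ on $\mathcal{E}$ follows from the same a priori estimate) and bounded on the subspace $P^{*}(\mathcal{E})\subset L^2([0,T],H^{-s,\mathcal{L}}(G))$.

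By the Hahn--Banach theorem this functional extends to a bounded linear functional on all of $L^2([0,T],H^{-s,\mathcal{L}}(G))$, and by the Riesz representation theorem it is represented by some $v\in L^2([0,T],H^{s,\mathcal{L}}(G))$ with norm bounded by the constant above; squaring gives the claimed energy inequality $\Vert v(t)\Vert^2_{H^{s,\mathcal{L}}(G)}\le C\Vert u_0\Vert^2_{H^{s,\mathcal{L}}(G)}+C'\int_0^T\Vert f\Vert^2_{H^{s,\mathcal{L}}(G)}d\tau$. Testing against $w\in\mathcal{E}$ recovers the weak formulation, so $v$ solves \eqref{PVI:2}. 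For uniqueness, if $v_1,v_2$ both solve \eqref{PVI:2} with the same data, their difference $v=v_1-v_2$ solves the homogeneous problem with zero initial data; applying the forward energy estimate \eqref{Condition:on:X} (now to $v$ itself, in the $H^{s,\mathcal{L}}$-scale — which holds by the same argument that produced \eqref{Condition:on:X}) forces $\Vert v(t)\Vert_{H^{s,\mathcal{L}}(G)}=0$ for all $t$, hence $v\equiv 0$. The main obstacle is the bookkeeping around adjoints: verifying that \eqref{Condition:on:X} genuinely holds for the time-reversed adjoint family in the negative Sobolev scale, which is where the structure of the subelliptic calculus (Theorem~\ref{calculus}) and the G\r{a}rding/sharp-G\r{a}rding inequalities (Theorems~\ref{GardinTheorem}, \ref{MainTheorem}) must be invoked exactly as in the forthcoming proofs of the energy estimates.
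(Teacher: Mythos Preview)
Your proposal is correct and follows essentially the same route as the paper: define the subspace $E=\{\phi\in\mathbb{X}:\phi(T)=0\}$, set up the linear functional $\beta(Q^{*}\phi)=\int_0^T(f,\phi)\,d\tau+(u_0,\phi(0))$ on $Q^{*}E$, use the backward adjoint energy estimate to show $\beta$ is bounded, extend via Hahn--Banach, represent by Riesz to obtain $v\in L^2([0,T],H^{s,\mathcal{L}}(G))$, and deduce uniqueness from the forward energy estimate applied to the difference of two solutions. You are in fact slightly more explicit than the paper about the one delicate point---that the hypothesis \eqref{Condition:on:X} must be transferred to the time-reversed adjoint family $K(T-t)^{*}$---which the paper simply asserts in its proof of the lemma but justifies only later in Propositions~\ref{energyestimate:apriori} and~\ref{energyestimate}.
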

\begin{proof}
 Now, we will use the Hanh-Banach theorem to deduce that the solution to \eqref{PVI:2} indeed, belongs to $\mathbb{X}
$.  So, let us denote  $Q=\partial_{t}-K(t),$ and let us introduce the spaces
    \begin{align*}
        E:=\{\phi\in \mathbb{X} :\phi(T)=0\},
    \end{align*}
    and $ Q^*E:=\{Q^*\phi:\phi\in E\}.$ Let us define the linear form $\beta\in (Q^*E)'$ by
    \begin{equation*}
    \beta(Q^*\phi):=\smallint\limits_{0}^T(f(\tau),\phi(\tau))d\tau+\frac{1}{i}(u_0,\phi(0)).
    \end{equation*}
    From \eqref{Condition:on:X} we have that for every $\phi\in E,$
    \begin{align*}
    \Vert \phi(t) \Vert^2_{H^{-s,\mathcal{L}}(G)}&\leqslant   \left(C\Vert \phi(T) \Vert^2_{H^{-s,\mathcal{L}}(G)}+C'\smallint\limits_{0}^T \Vert (\partial_{t}-K(\tau)^{*})\phi(\tau) \Vert^2_{H^{-s,\mathcal{L}}(G)}d\tau \right)\\
    &=C'\smallint\limits_{0}^T \Vert (\partial_{t}-K(\tau))^{*}\phi(\tau) \Vert^2_{H^{-s,\mathcal{L}}(G)}d\tau\\
    &=C' \Vert (\partial_{t}-K(\tau))^{*}\phi(\tau) \Vert^2_{L^2([0,T],H^{-s,\mathcal{L}}(G))}.
    \end{align*}
    The previous analysis for implies the inequality
    \begin{equation}\label{estimate;question:michael}
         \Vert \phi(t) \Vert^2_{H^{-s,\mathcal{L}}}\lesssim \Vert (\partial_{t}-K(\tau))^{*}\phi(\tau) \Vert^2_{L^2([0,T],H^{-s,\mathcal{L}})}=\Vert Q^{*}\phi(\tau) \Vert^2_{L^2([0,T],H^{-s,\mathcal{L}})}.
    \end{equation}
 {{So, in view of \eqref{estimate;question:michael}  we have}}
    \begin{align*}
    |\beta(Q^*\phi)|&\leqslant \smallint\limits_{0}^T\Vert f(\tau)\Vert_{H^{s,\mathcal{L}}(G)}\Vert \phi(\tau)\Vert_{H^{-s,\mathcal{L}}(G)} d\tau+\Vert u_0\Vert_{{H^{s,\mathcal{L}}(G)}}\Vert \phi(0)\Vert_{H^{-s,\mathcal{L}}(G)}\\
    &\leqslant \Vert f\Vert_{L^2([0,T],H^{s,\mathcal{L}}(G))}\Vert \phi\Vert_{L^2([0,T],H^{-s,\mathcal{L}}(G))}+\Vert u_0\Vert_{H^{s,\mathcal{L}}(G)}\Vert \phi\Vert_{H^{-s,\mathcal{L}}(G)}\\
    &\lesssim_{T} (\Vert f\Vert_{L^2([0,T],H^{s,\mathcal{L}}(G)}+\Vert u_0\Vert_{H^{s,\mathcal{L}}(G)})\Vert Q^*\phi(\tau) \Vert_{L^2([0,T],H^{-s,\mathcal{L}}(G))},
    \end{align*}which shows that $\beta$ is a bounded functional on $$ \mathscr{T}:=Q^*E\cap L^2([0,T],H^{-s,\mathcal{L}}(G)),$$ with $\mathscr{T}$ endowed with the topology induced by the  norm of $L^2([0,T],H^{-s,\mathcal{L}}(G)).$ By using the Hahn-Banach extension theorem, we can extends $\beta$ to a bounded functional $\tilde{\beta}$ on $L^2([0,T],H^{-s,\mathcal{L}}(G)),$ and by using the Riesz representation theorem, there exists $v\in (L^2([0,T],H^{-s,\mathcal{L}}(G))'=L^2([0,T],H^{s,\mathcal{L}}(G)) ,$ such that
    \begin{equation*}
      \tilde{\beta}(\psi)=(v,\psi),\quad \psi\in   L^2([0,T],H^{-s,\mathcal{L}}(G)).
    \end{equation*}In particular, for $\psi=Q^*\phi\in \mathscr{T},$ we have
    \begin{equation*}
      \tilde{\beta}(Q^* \phi)={\beta}(Q^*\phi)=(v,Q^*\phi),
    \end{equation*}  Because, we can identify $C^{\infty}_{0}((0,T),\mathscr{D}(G))$ as a subspace of $E$ 
    \begin{equation*}
       C^{\infty}_{0}((0,T),C^\infty(G))\subset E=\{\phi\in  C^1([0,T], H^{s,\mathcal{L}}(G) )   :\phi(T)=0\},
    \end{equation*} we have the identity
    \begin{align*}
      (f,\phi)=  \smallint\limits_{0}^T(f(\tau),\phi(\tau))d\tau= \smallint\limits_{0}^T(f(\tau),\phi(\tau))d\tau+\frac{1}{i}(u_0,\phi(0))=(v,Q^*\phi),
    \end{align*}for every $\phi \in C^{\infty}_{0}((0,T),C^\infty(G)). $ So, this implies that $v\in \textnormal{Dom}(Q^{**}).$ Because $Q^{**}=Q,$ we have that
    \begin{align*}
        (v,Q^*\phi)=(Qv,\phi)=(f,\phi),\,\,\forall\phi \in C^{\infty}_{0}((0,T),C^\infty(G)),
    \end{align*}which implies that $Qv=f.$ A routine argument of integration by parts shows that $v(0)=u_0.$ Now, in order to show the uniqueness of $v,$ let us assume that $u\in \mathbb{X}$ is a solution of the problem
    \begin{equation*} \begin{cases}\frac{\partial u}{\partial t}=K(t,x,D)u+f ,& \text{ }u\in \mathscr{D}'((0,T)\times G)
\\u(0)=u_0 .& \text{ } \end{cases}
\end{equation*} Then $\omega:=v-u\in \mathbb{X}$ solves the problem
\begin{equation*} \begin{cases}\frac{\partial \omega}{\partial t}=K(t,x,D)\omega,& \text{ }\omega\in \mathscr{D}'((0,T)\times G)
\\\omega(0)=0 ,& \text{ } \end{cases}
\end{equation*}and from Proposition \ref{energyestimate}, $\Vert \omega(t)\Vert_{H^{s,\mathcal{L}}(G)}=0,$ for all $0\leqslant t\leqslant T,$ and consequently, from the continuity in $t$ of the functions we have that $v(x,t)=u(x,t)$ for all $t\in [0,T]$ and a.e. $x\in G.$
\end{proof}

We will make use of the following lemma (see Lemma 4.4 of \cite[Chapter 6]{chpi:book}). Indeed, in view of Lemma \ref{Lemma:TheBook} the terms $y(\tau)=\Vert v(\tau)\Vert_{H^{s,\mathcal{L}}}$ inside of the integrals in our energy estimates can be neglected (compare e.g. \eqref{partialII} with \eqref{partialIII}). 
\begin{lemma}\label{Lemma:TheBook}Suppose that $y:[0,T]\rightarrow \mathbb{R}^+_0$ and $\phi:[0,T]\rightarrow \mathbb{R}^+_0$ are continuous and positive functions such that
\begin{equation}
  \forall t\in [0,T],\,  y(t)\leq C\left(\alpha+\smallint\limits_{0}^{T}[y(\tau)+\phi(\tau)]d\tau\right).
\end{equation}Then,
\begin{equation}
  \forall t\in [0,T],\,   y(t)\leq C_T \left(\alpha e^{Ct}+\smallint\limits_{0}^{T}e^{C(t-\tau)}\phi(\tau)d\tau\right).
\end{equation}
\end{lemma}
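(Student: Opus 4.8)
The final statement is the classical Gr\"onwall inequality, so the plan is to reduce it to a first-order linear differential inequality and integrate. First I would introduce the auxiliary function
$$ Y(t):=\alpha+\smallint\limits_{0}^{t}\big[y(\tau)+\phi(\tau)\big]\,d\tau,\qquad t\in[0,T], $$
reading the integrals in the hypothesis as running up to the current time $t$ (as in Lemma 4.4 of \cite[Chapter 6]{chpi:book} and as actually used in the application to the energy estimates). Since $y$ and $\phi$ are continuous, $Y$ is $C^1$ on $[0,T]$ with $Y(0)=\alpha$ and $Y'(t)=y(t)+\phi(t)$, and the hypothesis is exactly the pointwise bound $y(t)\le CY(t)$. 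Substituting, I obtain the differential inequality
$$ Y'(t)\le CY(t)+\phi(t),\qquad t\in[0,T]. $$

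Next I would solve this by the integrating-factor method: multiplying by $e^{-Ct}$ gives $\frac{d}{dt}\big(e^{-Ct}Y(t)\big)=e^{-Ct}\big(Y'(t)-CY(t)\big)\le e^{-Ct}\phi(t)$, and integrating from $0$ to $t$ while using $Y(0)=\alpha$ yields
$$ e^{-Ct}Y(t)\le \alpha+\smallint\limits_{0}^{t}e^{-C\tau}\phi(\tau)\,d\tau, $$
so that $Y(t)\le \alpha e^{Ct}+\smallint_{0}^{t}e^{C(t-\tau)}\phi(\tau)\,d\tau$. Feeding this back into $y(t)\le CY(t)$ produces the asserted estimate, with $C_T=C$ (any larger constant, in particular one depending on $T$, is of course also admissible, which is how the statement is phrased).

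There is no genuine obstacle here: the only points needing a moment of care are the differentiability of $Y$ with $Y'=y+\phi$ (immediate from the fundamental theorem of calculus and continuity of the integrand) and the correct bookkeeping of the upper limit of integration — one must apply the hypothesis in the form $y(t)\le CY(t)$ with $Y$ built from the integral up to $t$, since an integral over the whole interval $[0,T]$ on the right would only give a constant bound. If one preferred to avoid differentiating $Y$, the same conclusion follows by iterating the integral inequality, which directly generates the partial sums of $e^{Ct}$; but the integrating-factor argument is the most economical.
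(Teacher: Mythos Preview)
Your proof is correct and is the standard integrating-factor derivation of Gr\"onwall's inequality. Note, however, that the paper does not supply its own proof of this lemma: it is stated with a reference to Lemma~4.4 of \cite[Chapter~6]{chpi:book} and then used as a black box in the energy estimates. So there is no in-paper argument to compare against; your reconstruction is exactly the classical one that the cited reference contains.

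Your remark about the upper limit of integration is well taken: as written, both the hypothesis and the conclusion in the paper carry $\int_0^T$ rather than $\int_0^t$, which would make the right-hand side of the hypothesis a constant and the lemma essentially vacuous. Reading the integrals as $\int_0^t$ --- as in the source \cite{chpi:book} and as actually needed in the passage from \eqref{partialII} to \eqref{partialIII} --- is the correct interpretation, and you handle it properly. With that reading, your identification $C_T=C$ is sharp; the subscript $T$ in the paper's statement simply allows for the (harmless) enlargement that occurs once one bounds $\int_0^t$ by $\int_0^T$ in the final display.
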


\subsection{Case I: Diffusion problems for strongly elliptic operators with positive order}\label{subsectio:case:1}

Now, we are ready for the proof of our energy estimate when $K(t)$ is strongly elliptic for all $0\leq t\leq T.$

\begin{proposition}\label{energyestimate:apriori} Let  $0\leq \delta< \rho\leqslant 1,$ and let $K(t)\in C([0,T],\Psi^{m,\mathcal{L}}_{\rho,\delta}(G\times \widehat{G}))$   be a continuous family of  pseudo-differential operators of order $m>0.$ Assume that for all $t\in [0,T],$ $K(t)$ is strongly $\mathcal{L}$-elliptic.
  If $v\in C^1([0,T],H^s(G))\cap C([0,T],H^{s+\frac{m}{2},\mathcal{L}}(G))$ then there exist $C,C'>0,$ such that
\begin{equation}
\Vert v(t) \Vert^2_{H^{s,\mathcal{L}}(G)}\leqslant   \left(C\Vert v(0) \Vert^2_{H^{s,\mathcal{L}}(G)}+C'\smallint\limits_{0}^T \Vert (\partial_{t}-K(\tau))v(\tau) \Vert^2_{H^{s,\mathcal{L}}(G)}d\tau \right),  
\end{equation}holds for every $0\leqslant t\leqslant T.$ Moreover, we also have the estimate
\begin{equation}
\Vert v(t) \Vert^2_{H^{s,\mathcal{L}}(G)}\leqslant   \left(C\Vert v(T) \Vert^2_{H^{s,\mathcal{L}}(G)}+C'\smallint\limits_{0}^T \Vert (\partial_{t}-K(\tau)^{*})v(\tau) \Vert^2_{H^{s,\mathcal{L}}(G)}d\tau \right).  
\end{equation}
\end{proposition}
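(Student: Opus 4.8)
The plan is to prove the energy estimate by reducing to an $L^2$-energy identity, applying the subelliptic G\r{a}rding inequality, and then removing the lower-order terms with the Gronwall-type Lemma \ref{Lemma:TheBook}. First I would set $w(t):=\mathcal{M}_s v(t)=(1+\mathcal{L})^{s/2}v(t)$, so that $\Vert v(t)\Vert_{H^{s,\mathcal{L}}(G)}=\Vert w(t)\Vert_{L^2(G)}$, and write $f:=(\partial_t-K(t))v$. Applying $\mathcal{M}_s$ to the equation $\partial_t v=K(t)v+f$ gives $\partial_t w=\mathcal{M}_sK(t)\mathcal{M}_s^{-1}w+\mathcal{M}_sf$. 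The natural move is \emph{not} to commute $\mathcal{M}_s$ past $K(t)$ (commutators do not gain regularity in the subelliptic calculus, as the paper stresses), but instead to apply Lemma \ref{Lemma:Energy} directly to the equation $\partial_t v=K(t)v+f$ after pairing with $w$ in the appropriate way, or more cleanly to observe that $\widetilde{K}(t):=\mathcal{M}_sK(t)\mathcal{M}_s^{-1}\in\Psi^{m,\mathcal{L}}_{\rho,\delta}(G\times\widehat G)$ (by the composition calculus of Theorem \ref{calculus}) and that its symbol differs from that of $K(t)$ by lower-order terms. The key point is that $\widetilde{K}(t)$ is still strongly $\mathcal{L}$-elliptic: since $\mathcal{M}_s$ is elliptic and self-adjoint, $\mathsf{Re}(-\widetilde{K}(t))$ has symbol $\geq c\,\widehat{\mathcal{M}}(\xi)^m$ modulo a symbol in $S^{m-(\rho-\delta),\mathcal{L}}_{\rho,\delta}$, which is still dominated by $C\widehat{\mathcal{M}}(\xi)^m$ for $\langle\xi\rangle$ large, and the finitely many remaining $[\xi]$ contribute only an $L^2$ error.

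Next I would apply the energy identity of Lemma \ref{Lemma:Energy} to $w$:
\begin{equation*}
\frac{d}{dt}\Vert w(t)\Vert^2_{L^2(G)}=2\,\mathsf{Re}(\widetilde{K}(t)w(t),w(t))_{L^2(G)}+2\,\mathsf{Re}(\mathcal{M}_sf(t),w(t))_{L^2(G)}.
\end{equation*}
By the subelliptic G\r{a}rding inequality (Theorem \ref{GardinTheorem}) applied to $-\widetilde{K}(t)$, which is strongly $\mathcal{L}$-elliptic of order $m>0$, there are $C_1,C_2>0$ with
\begin{equation*}
-\mathsf{Re}(\widetilde{K}(t)w,w)_{L^2(G)}\geq C_1\Vert w\Vert^2_{H^{m/2,\mathcal{L}}(G)}-C_2\Vert w\Vert^2_{L^2(G)}\geq -C_2\Vert w\Vert^2_{L^2(G)},
\end{equation*}
uniformly in $t$ (using continuity of $t\mapsto K(t)$ to get uniform constants on the compact interval $[0,T]$). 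For the forcing term, Cauchy--Schwarz and Lemma \ref{lemenr1} (with $A$ an elliptic operator of order $0$, e.g.\ the identity, or more simply $\Vert\mathcal{M}_sf\Vert_{L^2}=\Vert f\Vert_{H^{s,\mathcal{L}}}$) give $2\,\mathsf{Re}(\mathcal{M}_sf,w)_{L^2}\leq \Vert f(t)\Vert^2_{H^{s,\mathcal{L}}(G)}+\Vert w(t)\Vert^2_{L^2(G)}$. Combining, $\frac{d}{dt}\Vert w(t)\Vert^2_{L^2}\leq (2C_2+1)\Vert w(t)\Vert^2_{L^2}+\Vert f(t)\Vert^2_{H^{s,\mathcal{L}}(G)}$, and integrating from $0$ to $t$ yields
\begin{equation*}
\Vert v(t)\Vert^2_{H^{s,\mathcal{L}}(G)}\leq \Vert v(0)\Vert^2_{H^{s,\mathcal{L}}(G)}+C\int_0^T\bigl[\Vert v(\tau)\Vert^2_{H^{s,\mathcal{L}}(G)}+\Vert f(\tau)\Vert^2_{H^{s,\mathcal{L}}(G)}\bigr]d\tau.
\end{equation*}
Applying Lemma \ref{Lemma:TheBook} with $y(\tau)=\Vert v(\tau)\Vert^2_{H^{s,\mathcal{L}}(G)}$, $\phi(\tau)=\Vert f(\tau)\Vert^2_{H^{s,\mathcal{L}}(G)}$, and $\alpha=\Vert v(0)\Vert^2_{H^{s,\mathcal{L}}(G)}$ absorbs the $y$-integral and produces the claimed estimate with new constants $C,C'$ (the factors $e^{Ct}\leq e^{CT}$ being bounded on $[0,T]$). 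For the second, adjoint estimate, I would run the identical argument backwards in time: set $\tilde v(t):=v(T-t)$, note $\partial_t\tilde v=-K(T-t)\tilde v-\tilde f$ with $\tilde f(t)=((\partial_t-K^*)v)(T-t)$ appearing after also passing to adjoints, use that $K(t)^*$ is strongly $\mathcal{L}$-elliptic with the same lower bound (since $\mathsf{Re}(-K(t)^*)=\mathsf{Re}(-K(t))$), and that $A\mapsto A^*$ maps $\Psi^{m,\mathcal{L}}_{\rho,\delta}$ to itself by Theorem \ref{calculus}.

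The main obstacle is the conjugation step: justifying that $\mathcal{M}_sK(t)\mathcal{M}_s^{-1}$ (equivalently, handling the $H^{s,\mathcal{L}}$-pairing rather than the $L^2$-pairing) stays strongly $\mathcal{L}$-elliptic and that the symbol error is genuinely lower order, since in the non-commutative subelliptic calculus one cannot simply invoke ``commutators drop order.'' The resolution is that $\mathcal{M}_s$ is \emph{scalar} on each representation block (its symbol $\widehat{\mathcal{M}}(\xi)^s$ is diagonal), so the leading symbol of $\mathcal{M}_sK(t)\mathcal{M}_s^{-1}$ is exactly $\widehat{\mathcal{M}}(\xi)^s\,\sigma_{K(t)}(x,\xi)\,\widehat{\mathcal{M}}(\xi)^{-s}$ plus genuinely lower-order terms from the asymptotic expansion in Theorem \ref{calculus}; taking real parts and using that conjugation by a diagonal positive matrix preserves the inequality $\mathsf{Re}(-\sigma)\geq c\,\widehat{\mathcal{M}}(\xi)^m$ up to the lower-order remainder gives strong $\mathcal{L}$-ellipticity of $\mathcal{M}_s(-K(t))\mathcal{M}_s^{-1}$, exactly what Theorem \ref{GardinTheorem} needs. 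Alternatively—and this is perhaps cleaner—one avoids conjugation entirely by applying Lemma \ref{lemenr1} to control $\Vert K(t)\mathcal{M}_s v\Vert_{L^2}$ in terms of $\Vert K(t)v\Vert_{H^{s,\mathcal{L}}}$ and $\Vert v\Vert_{H^{s,\mathcal{L}}}$, and pairing the original equation against $\mathcal{M}_s^2 v$ in $L^2$, which is how Lemma \ref{lemenr1} was evidently designed to be used; I would present that route as the primary argument and relegate the conjugation viewpoint to a remark.
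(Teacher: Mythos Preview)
Your alternative route at the end---first proving the $L^2$ energy estimate via Lemma \ref{Lemma:Energy} and the G\r{a}rding inequality, then substituting $\mathcal{M}_s v$ for $v$, and finally invoking Lemma \ref{lemenr1} (with $A=K(\tau)$, the $\partial_t$ part commuting trivially with $\mathcal{M}_s$) together with Lemma \ref{Lemma:TheBook}---is exactly what the paper does, and it is correct. You were right to flag it as the route to present.

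Your primary conjugation argument, however, has a genuine gap. The symbol $\widehat{\mathcal{M}}(\xi)^s$ is \emph{diagonal}, not \emph{scalar}: its diagonal entries $(1+\nu_{ii}(\xi)^2)^{s/2}$ vary with $i$. Conjugation by a non-scalar positive diagonal matrix does \emph{not} preserve the inequality $\mathsf{Re}(-\sigma)\geq c\,\widehat{\mathcal{M}}(\xi)^m$. Concretely, if $D=\mathrm{diag}(1,\lambda)$ and $A=\begin{pmatrix}1&0\\ \alpha&1\end{pmatrix}$ with $|\alpha|<2$, then $\mathsf{Re}(A)>0$, but $DAD^{-1}=\begin{pmatrix}1&0\\ \lambda\alpha&1\end{pmatrix}$ has $\mathsf{Re}(DAD^{-1})=\begin{pmatrix}1&\lambda\alpha/2\\ \lambda\alpha/2&1\end{pmatrix}$, whose determinant $1-\lambda^2\alpha^2/4$ is negative for large $\lambda$. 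Since the sub-Laplacian eigenvalues $\nu_{ii}(\xi)^2$ within a single representation block can differ by arbitrarily large ratios, this obstruction is real: you cannot conclude that $\widetilde{K}(t)=\mathcal{M}_sK(t)\mathcal{M}_s^{-1}$ is strongly $\mathcal{L}$-elliptic from the strong $\mathcal{L}$-ellipticity of $K(t)$, even modulo lower-order corrections. This is precisely the non-commutativity difficulty the paper warns about, and it is why Lemma \ref{lemenr1} exists---to bypass conjugation altogether. So drop the conjugation argument and lead with the Lemma \ref{lemenr1} route; then your proof matches the paper's.
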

\begin{proof} 
Let $v\in  C^1([0,T],H^s(G))\cap C([0,T],H^{s+\frac{m}{2},\mathcal{L}}(G)).$  This fact will be useful later because we will use the  G\r{a}rding inequality in Theorem \ref{GardinTheorem} applied to the operator $ \mathsf{Re}(-K(t)).$ For instance we will use that for $0\leq t\leq T,$ $u_{t}\in L^2(G).$ So, 
$v\in \textnormal{Dom}(\partial_{\tau}-K(\tau))$ for every $0\leqslant \tau\leqslant T.$ 

Let us define $f(\tau):=Q(\tau)v(\tau),$ with $Q(\tau):=(\partial_{\tau}-K(\tau)),$ for every $0\leqslant \tau\leqslant T.$ 
Using Lemma \ref{Lemma:Energy} we deduce that 
\begin{equation}
    \frac{d}{dt}\Vert v(t) \Vert^2_{L^2(G)}=2 \mathsf{Re}(K(t)v(t),v(t))_{L^2(G)}+2 \mathsf{Re}(f(t), v(t))_{L^2(G)},
\end{equation}for any $t\in [0,T].$
Now, from the subelliptic G\r{a}rding inequality, 
\begin{align}
     \mathsf{Re}(-K(t)v(t),v(t)) \geqslant C_1\Vert v(t)\Vert_{H^{\frac{m}{2},\mathcal{L}}(G)}-C_2\Vert v(t)\Vert_{L^2(G)}^2,
\end{align}and  the parallelogram law, we have
\begin{align*}
 2 \mathsf{Re}(f(t), v(t))_{L^2(G)}&\leqslant 2 \mathsf{Re}(f(t), v(t))_{L^2(G)}+\Vert f(t)\Vert_{L^2(G)}^2+\Vert v(t)\Vert_{L^2(G)}^2 \\
 &= \Vert f(t)+v(t)\Vert^2\leqslant \Vert f(t)+v(t)\Vert^2+\Vert f(t)-v(t)\Vert^2 \\
&= 2\Vert f(t)\Vert^2_{L^2(G)}+2\Vert v(t)\Vert^2_{L^2(G)},
\end{align*}
and consequently
\begin{align*}
   & \frac{d}{dt}\Vert v(t) \Vert^2_{L^2(G)}\\
   &\leqslant 2\left(C_2\Vert v(t)\Vert_{L^2(G)}^2-C_1\Vert v(t)\Vert_{H^{\frac{m}{2},\mathcal{L}}(G)}\right)+2\Vert f(t)\Vert^2_{L^2(G)}+2\Vert v(t)\Vert^2_{L^2(G)}\\
   &\lesssim \Vert f(t)\Vert^2_{L^2(G)}+\Vert v(t)\Vert^2_{L^2(G)}.
\end{align*} By using Gronwall's Lemma we obtain the energy estimate
\begin{equation}
\Vert v(t) \Vert^2_{L^2(G)}\leqslant  \left(C\Vert v(0) \Vert_{L^2(G)}^2+C'\smallint\limits_{0}^T \Vert f(\tau) \Vert_{L^2(G)}^2d\tau \right),   
\end{equation}for every $0\leqslant t\leqslant T,$ and $T>0.$ By replacing $v(t)$ by $\mathcal{M}_s v(t),$ we have  for every $0\leqslant \tau\leqslant T,$ that
\begin{equation}
  \Vert \mathcal{M}_s v\Vert^2_{L^2(G)}\leqslant  \left(C\Vert \mathcal{M}_s v(0) \Vert_{L^2(G)}^2+C'\smallint\limits_{0}^T \Vert Q(\tau)\mathcal{M}_sv(\tau) \Vert_{L^2(G)}^2d\tau \right).  
\end{equation}
So, we have proved that 
\begin{equation}\label{partial:lemma}
  \Vert  v\Vert^2_{H^{s,\mathcal{L}}(G)}\leqslant  \left(C\Vert  v(0) \Vert_{H^{s,\mathcal{L}}(G)}^2+C'\smallint\limits_{0}^T \Vert Q(\tau)\mathcal{M}_sv(\tau) \Vert_{L^2(G)}^2d\tau \right).  
\end{equation}
Now, by using Lemma \ref{lemenr1} with $A=Q(\tau),$ we have that 
$$\|Q(\tau)\mathcal{M}_sv\|_{L^2(G)}\leq C[\|Q(\tau)v\|_{H^{s,\mathcal{L}}(G)}+\|v\|_{H^{s,\mathcal{L}}(G)}]=C[\|f(\tau)\|_{H^{s,\mathcal{L}}(G)}+\|v\|_{H^{s,\mathcal{L}}(G)}],$$ that indeed implies the estimate
\begin{equation}
    \|Q(\tau)\mathcal{M}_sv\|^2_{L^2(G)}\lesssim[\|f(\tau)\|^2_{H^{s,\mathcal{L}}(G)}+\|v\|^2_{H^{s,\mathcal{L}}(G)}].
\end{equation}
In view of \eqref{partial:lemma} we have for $0\leq t\leq T$ the inequality
\begin{equation}\label{partialII}
     \Vert  v(t)\Vert^2_{H^{s,\mathcal{L}}(G)}\lesssim \Vert  v(0) \Vert_{H^{s,\mathcal{L}}(G)}^2+\smallint\limits_{0}^T [\|v(\tau)\|^2_{H^{s,\mathcal{L}}(G)}+\|f(\tau)\|^2_{H^{s,\mathcal{L}}(G)}]d\tau.
\end{equation}
Using the integral Lemma \ref{Lemma:TheBook} to \eqref{partialII}, we deduce that
\begin{equation}\label{partialIII}
     \Vert  v(t)\Vert^2_{H^{s,\mathcal{L}}(G)}\lesssim_{T} \Vert  v(0) \Vert_{H^{s,\mathcal{L}}(G)}^2+\smallint\limits_{0}^T \|f(\tau)\|^2_{H^{s,\mathcal{L}}(G)}d\tau,
\end{equation}for $0\leq t\leq T.$
To finish the proof, we can change the analysis above with $v(T-\cdot)$ instead of $v(\cdot),$ $f(T-\cdot)$ instead of $f(\cdot)$ and $Q^{*}=-\partial_{t}-K(t)^{*},$ (or equivalently $Q=\partial_{t}-K(t)$ ) instead of $Q^{*}=-\partial_{t}+K(t)^{*}$ (or equivalently $Q=\partial_{t}-K(t)$) using that $ \mathsf{Re}(K(T-t)^*)= \mathsf{Re}(K(T-t))$ to deduce that
\begin{align*}
&\Vert v(T-t) \Vert^2_{H^{s,\mathcal{L}}(G)}\\
&\leqslant \left(C\Vert v(T) \Vert^2_{L^{s,\mathcal{L}}(G)}+C'\smallint\limits_{0}^{T} \Vert (-\partial_{t}+K(T-t)^{*})v(T-\tau) \Vert^2_{H^{s,\mathcal{L}}(G)}d\tau \right)\\
&=   \left(C\Vert v(T) \Vert^2_{H^{s,\mathcal{L}}(G)}+C'\smallint\limits_{0}^T \Vert (-\partial_{t}-K(t)^{*})v(s) \Vert^2_{H^{s,\mathcal{L}}(G)}ds \right).
\end{align*}So, we conclude the proof.
\end{proof}
Now, we will prove the existence and uniqueness for \eqref{PVI} when the initial data are taken in $L^2(G).$
\begin{theorem}\label{Main:Th}
 Let $0\leq \delta< \rho\leq 1$, $T>0$ and  $K(t)\in \Psi_{\rho, \delta}^{m,\mathcal{L}}(G\times \widehat{G}) $  be strongly elliptic for some  $m>0$. Let  $s\in\mathbb{R}$,  $u_0\in H^{s,\mathcal{L}}(G)$, $f\in L^2([0,T],H^{s,\mathcal{L}}(G))$ . Then, there exists a unique solution $v$ in the space $$ C^1([0,T],H^{s,\mathcal{L}}(G))\bigcap C([0,T],H^{s+\frac{m}{2},\mathcal{L}}(G))$$ of the Cauchy problem: \begin{equation}\label{IVP:th}(\textnormal{IVP}): \begin{cases}\frac{\partial v}{\partial t}=K(t)v+f ,& \text{ }v\in \mathscr{D}'((0,T)\times G),
\\v(0)=u_0 .& \text{ } \end{cases}
\end{equation} Moreover, $v$ satisfies the energy estimate
\begin{equation}
\Vert v(t) \Vert^2_{H^{s,\mathcal{L}}(G)}\leqslant  \left(C\Vert u_0 \Vert^2_{H^{s,\mathcal{L}}(G)}+C'\Vert f \Vert^2_{L^2([0,T],H^{s,\mathcal{L}}(G))} \right),
\end{equation}for every $0\leqslant t\leqslant T.$
\end{theorem}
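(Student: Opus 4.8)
\textbf{Proof plan for Theorem \ref{Main:Th}.} The strategy is to deduce both existence–uniqueness and the energy estimate from the abstract Hahn–Banach machinery of Lemma \ref{Hanh:Banach:Lemma}, fed by the a priori energy estimates of Proposition \ref{energyestimate:apriori}. The plan is as follows. First I would observe that the two-sided energy estimate of Proposition \ref{energyestimate:apriori} is precisely the hypothesis \eqref{Condition:on:X} required by Lemma \ref{Hanh:Banach:Lemma}: applying Proposition \ref{energyestimate:apriori} with the Sobolev exponent $-s$ in place of $s$ gives, for every $v\in C^1([0,T],H^{-s,\mathcal{L}}(G))\cap C([0,T],H^{-s+\frac{m}{2},\mathcal{L}}(G))$, the bound
\begin{equation*}
\Vert v(t)\Vert^2_{H^{-s,\mathcal{L}}(G)}\leqslant C\Vert v(0)\Vert^2_{H^{-s,\mathcal{L}}(G)}+C'\smallint\limits_{0}^{T}\Vert(\partial_t-K(\tau))v(\tau)\Vert^2_{H^{-s,\mathcal{L}}(G)}\,d\tau,
\end{equation*}
and symmetrically the adjoint version with $K(\tau)^*$. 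Then Lemma \ref{Hanh:Banach:Lemma} produces a unique $v\in L^2([0,T],H^{s,\mathcal{L}}(G))$ solving \eqref{IVP:th} together with the claimed energy inequality
\begin{equation*}
\Vert v(t)\Vert^2_{H^{s,\mathcal{L}}(G)}\leqslant C\Vert u_0\Vert^2_{H^{s,\mathcal{L}}(G)}+C'\Vert f\Vert^2_{L^2([0,T],H^{s,\mathcal{L}}(G))}.
\end{equation*}

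The remaining work is a bootstrap/regularity argument upgrading the $L^2([0,T],H^{s,\mathcal{L}}(G))$ solution to one lying in $C^1([0,T],H^{s,\mathcal{L}}(G))\cap C([0,T],H^{s+\frac{m}{2},\mathcal{L}}(G))$, since Lemma \ref{Hanh:Banach:Lemma} only yields the weak solution. Here I would proceed by density: approximate $u_0\in H^{s,\mathcal{L}}(G)$ and $f\in L^2([0,T],H^{s,\mathcal{L}}(G))$ by smooth data $u_0^{(j)}\in C^\infty(G)$, $f^{(j)}\in C([0,T],C^\infty(G))$, obtain the corresponding smooth solutions $v^{(j)}$ (which lie in $C^1([0,T],C^\infty(G))$ and hence in the target space), and use the energy estimate applied to differences $v^{(j)}-v^{(k)}$ together with the estimate for $\partial_t v^{(j)}-\partial_t v^{(k)}=K(t)(v^{(j)}-v^{(k)})+(f^{(j)}-f^{(k)})$ — invoking the Sobolev continuity $K(t):H^{s+\frac{m}{2},\mathcal{L}}\to H^{s-\frac{m}{2},\mathcal{L}}$, or more crudely $K(t):H^{s,\mathcal{L}}\to H^{s-m,\mathcal{L}}$ — to conclude that $\{v^{(j)}\}$ is Cauchy in $C([0,T],H^{s,\mathcal{L}}(G))$ and, with the gain of $\tfrac m2$ derivatives coming from the Gårding term $C_1\Vert v(t)\Vert_{H^{m/2,\mathcal{L}}}$ in the energy identity, also Cauchy in $L^2([0,T],H^{s+\frac m2,\mathcal{L}}(G))$. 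The limit then inherits membership in the required spaces and solves \eqref{IVP:th}; uniqueness has already been secured by Lemma \ref{Hanh:Banach:Lemma} (or directly from Proposition \ref{energyestimate:apriori} applied to the difference of two solutions, which forces $\Vert\omega(t)\Vert_{H^{s,\mathcal{L}}(G)}=0$).

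The step I expect to be the main obstacle is the regularity upgrade, specifically extracting the $C^1$-in-time and $C([0,T],H^{s+\frac m2,\mathcal L}(G))$ membership from the $L^2$-in-time solution produced by Hahn–Banach: one must carefully justify that the Gårding gain of half a derivative in space propagates to continuity in time, and that $\partial_t v=K(t)v+f$ then makes sense pointwise in $t$ with values in a suitable Sobolev space. This is handled by the density argument together with the equation itself, but it requires keeping track of which Sobolev scale each term lives on and invoking the continuity in $t$ of the family $K(t)\in C([0,T],\Psi^{m,\mathcal L}_{\rho,\delta}(G\times\widehat G))$. The remaining ingredients — the two-sided a priori estimate, the Sobolev mapping properties of the calculus in Theorem \ref{calculus}, and Gronwall via Lemma \ref{Lemma:TheBook} — are all already in place, so no further genuinely new analytic input is needed.
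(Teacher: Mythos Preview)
Your plan is essentially the paper's proof: apply Proposition \ref{energyestimate:apriori} at exponent $-s$ to verify hypothesis \eqref{Condition:on:X} of Lemma \ref{Hanh:Banach:Lemma}, and then invoke that lemma to produce the solution and the energy estimate. The paper's argument is in fact terser than yours --- it simply records that the a priori estimate holds on $\mathbb{X}=C^1([0,T],H^{-s,\mathcal{L}}(G))\cap C([0,T],H^{-s+\frac{m}{2},\mathcal{L}}(G))$ and cites Lemma \ref{Hanh:Banach:Lemma}, without spelling out the density/bootstrap step you describe for upgrading regularity to $C^1\cap C([0,T],H^{s+\frac{m}{2},\mathcal{L}})$; so your anticipated ``main obstacle'' is precisely the detail the paper leaves implicit.
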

\begin{proof}
We have proved the {\it a priori} estimate in Proposition \ref{energyestimate:apriori}  for  $$v\in \mathbb{X}:=C^1([0,T],H^{-s}(G))\bigcap C([0,T],H^{-s+\frac{m}{2}}(G))   .$$ Essentially the condition $v\in  C([0,T],H^{-s+m/2}(G)) $ was used when applying the G\r{a}rding inequality in \eqref{applying :Garding}. Then, the existence of $v\in C^1([0,T],H^{s}(G))$ solving \eqref{IVP:th} now follows from Lemma \ref{Hanh:Banach:Lemma}.
\end{proof}

\subsection{Case II:  Diffusion problems for operators with positive symbols on the phase space}\label{subsectio:case:1}

We will start with the following energy estimate. Since we will apply the Sharp G\r{a}rding inequality proved for the  subelliptic  H\"ormander classes in \cite{SharpGarding}, we will require the condition $0\leq \delta <(2\kappa-1)^{-1}\rho,$ with $0<\rho\leq 1.$
\begin{proposition}\label{energyestimate} For  $0< \rho\leqslant 1,$ let  $0\leq \delta<(2\kappa-1)^{-1}\rho.$
Let $K(t)=K(t,x,D)\in C([0,T],\Psi^{m,\mathcal{L}}_{\rho,\delta}(G\times \widehat{G}))$   be a continuous family of  pseudo-differential operators of order $m\in \mathbb{R},$ such that $$\forall\, (x,[\xi])\in G\times \widehat{G},\quad \sigma_{-\textnormal{Re }K(t)}(x,[\xi])\geq 0,\textnormal{ and }\,\quad 0\leq m\leq \varkappa  ={\rho}/{\kappa}-\left(2-{1}/{\kappa}\right)\delta. $$
  If  $v\in C^1([0,T], L^2(G) ),$ then there exist $C,C'>0,$ such that
\begin{equation}
\Vert v(t) \Vert^2_{L^2(G)}\leqslant   \left(C\Vert v(0) \Vert^2_{L^2(G)}+C'\smallint\limits_{0}^T \Vert (\partial_{t}-K(\tau))v(\tau) \Vert^2_{L^2(G)}d\tau \right),  
\end{equation}holds for every $0\leqslant t\leqslant T.$ Moreover, we also have the estimate
\begin{equation}\label{Q*}
\Vert v(t) \Vert^2_{L^2(G)}\leqslant   \left(C\Vert v(T) \Vert^2_{L^2(G)}+C'\smallint\limits_{0}^T \Vert (\partial_{t}-K(\tau)^{*})v(\tau) \Vert^2_{L^2(G)}d\tau \right).  
\end{equation}
\end{proposition}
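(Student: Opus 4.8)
The plan is to mimic the proof of Proposition \ref{energyestimate:apriori}, replacing the G\r{a}rding inequality (Theorem \ref{GardinTheorem}) with the sharp G\r{a}rding inequality (Theorem \ref{MainTheorem}), and working throughout at the $L^2$-level rather than on subelliptic Sobolev spaces. First I would set $Q(\tau):=\partial_{\tau}-K(\tau)$ and $f(\tau):=Q(\tau)v(\tau)$ for $v\in C^1([0,T],L^2(G))$; note that since $m\leq \varkappa\leq 1\leq$ (well, $\varkappa\le \rho/\kappa\le 1$) the operator $K(t)$ has order at most $1$, so $K(t)v(t)\in H^{-1,\mathcal{L}}(G)$ and the pairing $(K(t)v(t),v(t))_{L^2(G)}$ below should be read as the $H^{-m/2,\mathcal{L}}$--$H^{m/2,\mathcal{L}}$ duality; but actually since we only assume $v\in C^1([0,T],L^2(G))$ a small regularization/density argument may be needed, or one restricts first to $v\in C^1([0,T],\mathscr{D}(G))$ and extends by the estimate. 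Then by Lemma \ref{Lemma:Energy},
\begin{equation*}
\frac{d}{dt}\Vert v(t)\Vert^2_{L^2(G)}=2\,\mathsf{Re}(K(t)v(t),v(t))_{L^2(G)}+2\,\mathsf{Re}(f(t),v(t))_{L^2(G)}.
\end{equation*}

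Next I would apply the subelliptic sharp G\r{a}rding inequality of Theorem \ref{MainTheorem} to the operator with symbol $\sigma_{-\mathsf{Re}\,K(t)}(x,[\xi])\geq 0$, which lies in $S^{m,\mathcal{L}}_{\rho,\delta}(G\times\widehat{G})$; this yields, for some $C>0$ uniform in $t\in[0,T]$ (here continuity of $t\mapsto K(t)$ in the Fr\'echet topology of $\Psi^{m,\mathcal{L}}_{\rho,\delta}$ guarantees the uniformity of the constant),
\begin{equation*}
\mathsf{Re}(-K(t)v(t),v(t))_{L^2(G)}\geq -C\Vert v(t)\Vert^2_{H^{\frac{m-\varkappa}{2},\mathcal{L}}(G)}.
\end{equation*}
The point of the hypothesis $m\leq\varkappa$ is precisely that then $\frac{m-\varkappa}{2}\leq 0$, so $\Vert v(t)\Vert_{H^{(m-\varkappa)/2,\mathcal{L}}(G)}\leq \Vert v(t)\Vert_{L^2(G)}$ and the lower bound simplifies to $\mathsf{Re}(K(t)v(t),v(t))_{L^2(G)}\leq C\Vert v(t)\Vert^2_{L^2(G)}$ — this is the one place the order restriction is used, and it replaces the role of the $-C_2\Vert v\Vert^2_{L^2}$ term in Case I. Combining this with the parallelogram-law estimate $2\,\mathsf{Re}(f(t),v(t))_{L^2(G)}\leq 2\Vert f(t)\Vert^2_{L^2(G)}+2\Vert v(t)\Vert^2_{L^2(G)}$ exactly as in Proposition \ref{energyestimate:apriori} gives
\begin{equation*}
\frac{d}{dt}\Vert v(t)\Vert^2_{L^2(G)}\lesssim \Vert f(t)\Vert^2_{L^2(G)}+\Vert v(t)\Vert^2_{L^2(G)}.
\end{equation*}

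From here Gronwall's lemma (or directly Lemma \ref{Lemma:TheBook} with $y(t)=\Vert v(t)\Vert^2_{L^2(G)}$, $\phi(t)=\Vert f(t)\Vert^2_{L^2(G)}$, $\alpha=\Vert v(0)\Vert^2_{L^2(G)}$) yields the first claimed energy estimate
\begin{equation*}
\Vert v(t)\Vert^2_{L^2(G)}\leq \Big(C\Vert v(0)\Vert^2_{L^2(G)}+C'\smallint\limits_0^T\Vert Q(\tau)v(\tau)\Vert^2_{L^2(G)}\,d\tau\Big),\quad 0\leq t\leq T.
\end{equation*}
For the backward estimate \eqref{Q*} I would repeat the argument with $v(T-\cdot)$, $f(T-\cdot)$ and the adjoint operator $Q(\tau)^{*}=-\partial_{\tau}-K(\tau)^{*}$ in place of $Q$, using that $\mathsf{Re}\,K(t)^{*}=\mathsf{Re}\,K(t)$ so that the symbol positivity hypothesis $\sigma_{-\mathsf{Re}\,K(t)}\geq 0$ is preserved and Theorem \ref{MainTheorem} applies verbatim; the change of variable $\tau\mapsto T-\tau$ converts the forward-in-time inequality for $v(T-\cdot)$ into the desired backward inequality for $v$, exactly as at the end of the proof of Proposition \ref{energyestimate:apriori}. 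The main obstacle I anticipate is not any single estimate but rather the regularity bookkeeping: since $K(t)$ has positive order $m$ (up to $1$), the term $K(t)v(t)$ need not be in $L^2$ for merely $v\in C^1([0,T],L^2(G))$, so one must either interpret the energy identity in a duality sense and justify that $\mathsf{Re}(K(t)v(t),v(t))_{L^2}$ makes sense and the sharp G\r{a}rding bound still applies (it does, since the right-hand side of \eqref{SGIT} only involves $\Vert v\Vert_{H^{(m-\varkappa)/2,\mathcal{L}}}$ with negative exponent), or establish the estimate first for $v\in C^1([0,T],C^\infty(G))$ and then pass to the limit; the case $m=0$ where $K(t)$ is $L^2$-bounded is the clean prototype and the general case is handled by this approximation/duality remark.
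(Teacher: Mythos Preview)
Your proposal is correct and follows essentially the same route as the paper's own proof: apply Lemma \ref{Lemma:Energy}, invoke the sharp G\r{a}rding inequality (Theorem \ref{MainTheorem}) for $-\mathsf{Re}\,K(t)$, use $m\le\varkappa$ to replace the $H^{(m-\varkappa)/2,\mathcal{L}}$-norm by the $L^2$-norm, bound the cross term by the parallelogram law, and close with Gronwall; the backward estimate is obtained by the same time-reversal $t\mapsto T-t$ and $\mathsf{Re}\,K(t)^*=\mathsf{Re}\,K(t)$ argument. Your additional remarks on the regularity bookkeeping (density in $C^1([0,T],C^\infty(G))$ or duality interpretation of $(K(t)v,v)$) are a welcome clarification that the paper leaves implicit.
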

\begin{proof} 
Let $v\in C^1([0,T], L^2(G) )) .$  This fact will be useful later because we will use the subelliptic sharp G\r{a}rding inequality applied to the operator $ \mathsf{Re}(-K(t)).$ For instance we will use that for $0\leq t\leq T,$ $u_{t}\in L^2(G).$ So, 
$v\in \textnormal{Dom}(\partial_{\tau}-K(\tau))$ for every $0\leqslant \tau\leqslant T.$ 

Let us define $f(\tau):=Q(\tau)v(\tau),$ with $Q(\tau):=(\partial_{\tau}-K(\tau)),$ for every $0\leqslant \tau\leqslant T.$ Observe that in view of Lemma \ref{Lemma:Energy} we have that 
\begin{align*}
   \frac{d}{dt}\Vert v(t) \Vert^2_{L^2(G)}=2 \mathsf{Re}(K(t)v(t),v(t))_{L^2(G)}+2 \mathsf{Re}(f(t), v(t))_{L^2(G)}.
\end{align*}Now, from the sharp  G\r{a}rding inequality \eqref{SGIT} we deduce that 
\begin{align}\label{applying :Garding}
     \mathsf{Re}(-K(t)v(t),v(t)) \leqslant C_1\Vert v(t)\Vert_{H^{\frac{m-\varkappa}{2}}(G)},\quad \varkappa  ={\rho}/{\kappa}-\left(2-{1}/{\kappa}\right)\delta,
\end{align}and in view of   the parallelogram law, we have
\begin{align*}
 2 \mathsf{Re}(f(t), v(t))_{L^2(G)}&\leqslant 2 \mathsf{Re}(f(t), v(t))_{L^2(G)}+\Vert f(t)\Vert_{L^2(G)}^2+\Vert v(t)\Vert_{L^2(G)}^2 \\
 &= \Vert f(t)+v(t)\Vert^2\leqslant \Vert f(t)+v(t)\Vert^2+\Vert f(t)-v(t)\Vert^2 \\
&= 2\Vert f(t)\Vert^2_{L^2(G)}+2\Vert v(t)\Vert^2_{L^2(G)}.
\end{align*}
Putting all the estimates above together we have proved that
\begin{align*}
   & \frac{d}{dt}\Vert v(t) \Vert^2_{L^2(G)}\leqslant 2\left(C_1\Vert v(t)\Vert_{H^{\frac{m-\varkappa}{2}}(G)}\right)+2\Vert f(t)\Vert^2_{L^2(G)}+2\Vert v(t)\Vert^2_{L^2(G)}.
\end{align*} Using that $m-\varkappa\leq 0,$ we have the embedding   $L^2(G)\hookrightarrow H^{\frac{m-\varkappa}{2}}(G),$ and consequently the comparison of norms $\Vert v(t)\Vert_{H^{\frac{m-\varkappa}{2}}} \lesssim \Vert v(t)\Vert_{L^2(G)} .$  So, we have the inequality
\begin{align*}
   \frac{d}{dt}\Vert v(t) \Vert^2_{L^2(G)}\lesssim  \Vert f(t)\Vert^2_{L^2(G)}+\Vert v(t)\Vert^2_{L^2(G)}.
\end{align*}By using Gronwall's Lemma we obtain the energy estimate
\begin{equation}
\Vert v(t) \Vert^2_{L^2(G)}\leqslant  \left(C\Vert v(0) \Vert_{L^2(G)}^2+C'\smallint\limits_{0}^T \Vert f(\tau) \Vert_{L^2(G)}^2d\tau \right),   
\end{equation}for every $0\leqslant t\leqslant T,$ and $T>0.$
To finish the proof, we can change the analysis above with $v(T-\cdot)$ instead of $v(\cdot),$ $f(T-\cdot)$ instead of $f(\cdot)$ and $Q^{*}=-\partial_{t}-K(t)^{*},$ (or equivalently $Q=\partial_{t}-K(t)$ ) instead of $Q^{*}=-\partial_{t}+K(t)^{*}$ (or equivalently $Q=\partial_{t}-K(t)$) using that $ \mathsf{Re}(K(T-t)^*)= \mathsf{Re}(K(T-t))$ to deduce that
\begin{align*}
&\Vert v(T-t) \Vert^2_{L^2(G)}\\
&\leqslant \left(C\Vert v(T) \Vert^2_{L^2(G)}+C'\smallint\limits_{0}^{T} \Vert (-\partial_{t}+K(T-t)^{*})v(T-\tau) \Vert^2_{L^2(G)}d\tau \right)\\
&=   \left(C\Vert v(T) \Vert^2_{L^2(G)}+C'\smallint\limits_{0}^T \Vert (-\partial_{t}-K(t)^{*})v(s) \Vert^2_{L^2(G)}ds \right).
\end{align*}So, we conclude the proof.
\end{proof}
Now, we will prove the existence and uniqueness for \eqref{PVI} when the initial data are taken in $L^2(G).$
\begin{theorem}\label{Main:Th:2}
For  $0< \rho\leqslant 1,$ let  $0\leq \delta<(2\kappa-1)^{-1}\rho.$
Let $K(t)=K(t,x,D)\in C([0,T],\Psi^{m,\mathcal{L}}_{\rho,\delta}(G\times \widehat{G}))$   be a continuous family of  pseudo-differential operators of order $m\in \mathbb{R},$ such that $$\forall\, (x,[\xi])\in G\times \widehat{G},\quad \sigma_{-\textnormal{Re }K(t)}(x,[\xi])\geq 0,\textnormal{ and }\,\quad 0\leq m\leq \varkappa={\rho}/{\kappa}-\left(2-{1}/{\kappa}\right)\delta. $$
Let   $u_0\in L^2(G)$, and let $f\in L^2([0,T],L^2(G)).$ Then there exists a unique $v\in C^1([0,T], L^2(G) ) $ solving \eqref{PVI}. Moreover, $v$ satisfies the energy estimate
\begin{equation}
\Vert v(t) \Vert^2_{L^2(G)}\leqslant  \left(C\Vert u_0 \Vert^2_{L^2(G)}+C'\Vert f \Vert^2_{L^2([0,T],L^2(G))} \right),
\end{equation}for every $0\leqslant t\leqslant T.$
\end{theorem}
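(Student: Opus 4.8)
The plan is to deduce Theorem \ref{Main:Th:2} from the energy estimates in Proposition \ref{energyestimate} together with the Hahn--Banach/Riesz duality argument packaged in Lemma \ref{Hanh:Banach:Lemma}, exactly as Theorem \ref{Main:Th} was derived from Proposition \ref{energyestimate:apriori}. Concretely, the hypotheses here are precisely those of Proposition \ref{energyestimate}: $0<\rho\leq 1$, $0\leq\delta<(2\kappa-1)^{-1}\rho$, the family $K(t)\in C([0,T],\Psi^{m,\mathcal{L}}_{\rho,\delta}(G\times\widehat{G}))$ has $\sigma_{-\textnormal{Re}\,K(t)}(x,[\xi])\geq 0$ on every representation space, and $0\leq m\leq\varkappa=\rho/\kappa-(2-1/\kappa)\delta$. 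Under these hypotheses Proposition \ref{energyestimate} supplies, for every $v\in C^1([0,T],L^2(G))$, both the forward estimate controlling $\Vert v(t)\Vert_{L^2(G)}^2$ by $\Vert v(0)\Vert_{L^2(G)}^2$ and $\int_0^T\Vert(\partial_t-K(\tau))v(\tau)\Vert_{L^2(G)}^2\,d\tau$, and the backward estimate \eqref{Q*} controlling $\Vert v(t)\Vert_{L^2(G)}^2$ by $\Vert v(T)\Vert_{L^2(G)}^2$ and $\int_0^T\Vert(\partial_t-K(\tau)^{*})v(\tau)\Vert_{L^2(G)}^2\,d\tau$.

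First I would observe that this is the $s=0$ case of the abstract machinery: the subelliptic Sobolev space $H^{0,\mathcal{L}}(G)$ is just $L^2(G)$, so the hypothesis \eqref{Condition:on:X} required by Lemma \ref{Hanh:Banach:Lemma} with $s=0$ is exactly the forward estimate of Proposition \ref{energyestimate}, and the dual estimate used inside the proof of Lemma \ref{Hanh:Banach:Lemma} (the bound on $\Vert\phi(t)\Vert_{H^{-s,\mathcal{L}}}$ in terms of $\Vert Q^{*}\phi\Vert_{L^2([0,T],H^{-s,\mathcal{L}})}$) is, again for $s=0$, exactly the backward estimate \eqref{Q*} applied to test functions $\phi\in E=\{\phi\in C^1([0,T],L^2(G)):\phi(T)=0\}$, for which the boundary term $\Vert\phi(T)\Vert_{L^2(G)}^2$ vanishes. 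Hence Lemma \ref{Hanh:Banach:Lemma} with $s=0$ produces a unique $v\in L^2([0,T],L^2(G))$ solving \eqref{PVI} with $v(0)=u_0$ and satisfying the stated energy inequality $\Vert v(t)\Vert_{L^2(G)}^2\leq C\Vert u_0\Vert_{L^2(G)}^2+C'\Vert f\Vert_{L^2([0,T],L^2(G))}^2$.

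Next I would upgrade the regularity of the solution from $L^2([0,T],L^2(G))$ to $C^1([0,T],L^2(G))$, which is the precise regularity claimed in the statement. The point is that once $v\in L^2([0,T],L^2(G))$ solves $\partial_t v=K(t)v+f$ in the distributional sense with $f\in L^2([0,T],L^2(G))$ and $K(t)\in C([0,T],\Psi^{m,\mathcal{L}}_{\rho,\delta})$, one reads off $\partial_t v=K(t)v+f$ and bootstraps: $K(t)$ maps $L^2$ continuously into $H^{-m,\mathcal{L}}(G)$, and using the energy inequality to localise $v(t)\in L^2(G)$ for a.e.\ $t$, the right-hand side is seen to lie in the appropriate space so that $\partial_t v$ is continuous in $t$ with values in $L^2(G)$ (here one invokes Lemma \ref{Lemma:TheBook} and the continuity in $t$ of $K(t)$ to pass from a.e.\ statements to genuine continuity, just as in the uniqueness argument at the end of Lemma \ref{Hanh:Banach:Lemma}). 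Uniqueness within $C^1([0,T],L^2(G))$ follows from the forward energy estimate of Proposition \ref{energyestimate} applied to the difference $\omega=v-u$ of two solutions with $\omega(0)=0$ and $(\partial_t-K(t))\omega=0$, giving $\Vert\omega(t)\Vert_{L^2(G)}=0$ for all $t$.

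The main obstacle, and the only genuinely delicate point, is the regularity bootstrap: a priori Lemma \ref{Hanh:Banach:Lemma} only yields $v\in L^2([0,T],L^2(G))$, whereas Proposition \ref{energyestimate} — whose hypothesis is $v\in C^1([0,T],L^2(G))$ — is what one wants to invoke for the final energy inequality. I would resolve this the standard way: first establish the estimate for smooth (or $C^1$-in-time) data by a density/approximation argument together with the a priori bound, obtaining the energy inequality for all $v$ in the claimed class; then show the distributional solution produced by duality in fact lies in that class by the bootstrap just described, and finally identify it with the limit of the approximating sequence. All the ingredients — Sobolev boundedness of the calculus (Theorem \ref{calculus}), the sharp G\r{a}rding inequality (Theorem \ref{MainTheorem}) through Proposition \ref{energyestimate}, the integral inequality (Lemma \ref{Lemma:TheBook}), and the duality argument (Lemma \ref{Hanh:Banach:Lemma}) — are already in place, so the proof is essentially an assembly of the $s=0$ instances of these results.
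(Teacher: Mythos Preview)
Your proposal is correct and follows essentially the same approach as the paper: invoke the a priori energy estimates of Proposition \ref{energyestimate} (the sharp G\r{a}rding-based bounds for the case $s=0$) and then feed them into the duality machinery of Lemma \ref{Hanh:Banach:Lemma} to obtain existence, uniqueness, and the energy inequality. If anything, you are more scrupulous than the paper about the regularity gap between the $L^2([0,T],L^2(G))$ solution produced by Lemma \ref{Hanh:Banach:Lemma} and the $C^1([0,T],L^2(G))$ regularity asserted in the statement; the paper's proof simply cites the lemma without commenting on this point.
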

\begin{proof}
We have proved the {\it a priori} estimate in Proposition \ref{energyestimate:apriori}  for  $$v\in \mathbb{X}:=C^1([0,T], L^2(G) )   .$$ Essentially the condition $v\in  C([0,T],L^2(G)) $ was used when applying the sharp G\r{a}rding inequality in Theorem \ref{MainTheorem}. Then, the existence of $v\in \mathbb{X}$ solving \eqref{IVP:th} now follows from Lemma \ref{Hanh:Banach:Lemma}.
\end{proof}
Now, we extend Theorem \ref{Main:Th} to the case where the initial data are considered in general Sobolev spaces.

\begin{theorem}\label{Main:Th:Sob}
For  $0< \rho\leqslant 1,$ let  $0\leq \delta<(2\kappa-1)^{-1}\rho,$ and let $s\in \mathbb{R}.$
Let $K(t)=K(t,x,D)\in C([0,T],\Psi^{m,\mathcal{L}}_{\rho,\delta}(G\times \widehat{G}))$   be a continuous family of  pseudo-differential operators of order $m\in \mathbb{R},$ such that $$\forall\, (x,[\xi])\in G\times \widehat{G},\quad \sigma_{-\textnormal{Re }\tilde{K}(t)}(x,[\xi])\geq 0, $$
where {{
$$  \tilde{K}(t)= (1+\mathcal{L})^{\frac{s}{2}}K(t) (1+\mathcal{L})^{-\frac{s}{2}} \textnormal{   and }\, 0\leq m\leq \varkappa  ={\rho}/{\kappa}-\left(2-{1}/{\kappa}\right)\delta. $$}}
Let $u_0\in H^{s, \mathcal{L}}(G)$, and let $f\in L^2([0,T],H^{s,\mathcal{L}}(G)).$ Then there exists a unique $v\in C^1([0,T], H^{s,\mathcal{L}}(G) )) $ solving \eqref{PVI}. Moreover, $v$ satisfies the energy estimate
\begin{equation}
\Vert v(t) \Vert^2_{H^{s,\mathcal{L}}(G)}\leqslant  \left(C\Vert u_0 \Vert^2_{H^{s,\mathcal{L}}(G)}+C'\Vert f \Vert^2_{L^2([0,T],H^{s,\mathcal{L}}(G))} \right),
\end{equation}for every $0\leqslant t\leqslant T.$
\end{theorem}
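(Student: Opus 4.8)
The plan is to reduce Theorem \ref{Main:Th:Sob} to Theorem \ref{Main:Th:2} by conjugating the whole Cauchy problem with the subelliptic Bessel potential $\mathcal{M}_s=(1+\mathcal{L})^{\frac{s}{2}}$. First I would set $w:=\mathcal{M}_s v$, $g:=\mathcal{M}_s f$, and $w_0:=\mathcal{M}_s u_0$; since $\mathcal{M}_s:H^{s,\mathcal{L}}(G)\to L^2(G)$ is an isomorphism (Proposition on Sobolev boundedness together with the fact that $\mathcal{M}_{-s}$ is its inverse modulo the calculus), we have $u_0\in H^{s,\mathcal{L}}(G)\iff w_0\in L^2(G)$ and $f\in L^2([0,T],H^{s,\mathcal{L}}(G))\iff g\in L^2([0,T],L^2(G))$. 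Applying $\mathcal{M}_s$ to the equation $\partial_t v=K(t)v+f$ and inserting $\mathcal{M}_{-s}\mathcal{M}_s=I$ gives $\partial_t w=\widetilde{K}(t)w+g$ with $\widetilde{K}(t)=\mathcal{M}_sK(t)\mathcal{M}_{-s}$, and $w(0)=w_0$. Thus $v$ solves \eqref{PVI} in $C^1([0,T],H^{s,\mathcal{L}}(G))$ precisely when $w$ solves the conjugated IVP in $C^1([0,T],L^2(G))$, and the two are related by the bicontinuous map $\mathcal{M}_s$.

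Next I would verify that $\widetilde{K}(t)$ satisfies the hypotheses of Theorem \ref{Main:Th:2}. By the composition part of Theorem \ref{calculus}, $\widetilde{K}(t)=\mathcal{M}_sK(t)\mathcal{M}_{-s}\in\Psi^{m,\mathcal{L}}_{\rho,\delta}(G\times\widehat{G})$ since $\mathcal{M}_{\pm s}\in\Psi^{\pm s,\mathcal{L}}_{1,0}(G\times\widehat{G})\subset\Psi^{\pm s,\mathcal{L}}_{\rho,\delta}(G\times\widehat{G})$ and the orders add to $m$; continuity in $t$ is preserved because composition is a continuous bilinear map between the relevant Fr\'echet spaces and $\mathcal{M}_{\pm s}$ are fixed $t$-independent operators. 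The positivity hypothesis $\sigma_{-\mathrm{Re}\,\widetilde{K}(t)}(x,[\xi])\geq 0$ is exactly the assumption imposed in the statement, so no extra work is needed there, and the order restriction $0\le m\le\varkappa$ is identical. Therefore Theorem \ref{Main:Th:2} applies to the conjugated problem and produces a unique $w\in C^1([0,T],L^2(G))$ solving it, together with the energy estimate
\begin{equation*}
\Vert w(t)\Vert^2_{L^2(G)}\leqslant\left(C\Vert w_0\Vert^2_{L^2(G)}+C'\Vert g\Vert^2_{L^2([0,T],L^2(G))}\right),\qquad 0\le t\le T.
\end{equation*}

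Finally I would transfer the conclusion back: set $v:=\mathcal{M}_{-s}w$, which lies in $C^1([0,T],H^{s,\mathcal{L}}(G))$ and solves \eqref{PVI} with data $u_0,f$; uniqueness of $v$ follows from uniqueness of $w$ since $\mathcal{M}_s$ is a bijection on the relevant function spaces. Rewriting the displayed $L^2$ estimate in terms of $v$, $u_0$, $f$ using $\Vert w(t)\Vert_{L^2(G)}=\Vert v(t)\Vert_{H^{s,\mathcal{L}}(G)}$, $\Vert w_0\Vert_{L^2(G)}=\Vert u_0\Vert_{H^{s,\mathcal{L}}(G)}$ and $\Vert g\Vert_{L^2([0,T],L^2(G))}=\Vert f\Vert_{L^2([0,T],H^{s,\mathcal{L}}(G))}$ yields precisely the asserted energy inequality. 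The only mild subtlety — and the step I expect to require the most care — is the identity $\partial_t(\mathcal{M}_s v)=\mathcal{M}_s\partial_t v$ and the fact that $\mathcal{M}_{-s}$ is a genuine two-sided inverse of $\mathcal{M}_s$ on these spaces (not merely a parametrix): one should note that $\mathcal{M}_s=(1+\mathcal{L})^{s/2}$ is defined by the functional calculus of the sub-Laplacian, so $\mathcal{M}_s\mathcal{M}_{-s}=\mathcal{M}_{-s}\mathcal{M}_s=I$ exactly, which makes the conjugation of the IVP an exact equivalence rather than one modulo smoothing terms, and commuting $\mathcal{M}_s$ past $\partial_t$ is justified because $\mathcal{M}_s$ is time-independent and continuous on the Fr\'echet spaces in play.
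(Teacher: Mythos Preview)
Your proposal is correct and follows essentially the same approach as the paper: conjugate the Cauchy problem by the subelliptic Bessel potential $\mathcal{M}_s=(1+\mathcal{L})^{s/2}$ to reduce to the $L^2$ result (Theorem \ref{Main:Th:2}), then translate the resulting energy estimate back to $H^{s,\mathcal{L}}$. Your write-up is in fact more careful than the paper's, since you explicitly justify $\widetilde{K}(t)\in\Psi^{m,\mathcal{L}}_{\rho,\delta}$ via the composition calculus, the preservation of continuity in $t$, and the exact (not merely parametrix-level) invertibility of $\mathcal{M}_s$.
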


\begin{proof}Observe that if $v$ is a solution for the Cauchy problem
\begin{equation*} \begin{cases}\frac{\partial v}{\partial t}=K(t,x,D)v+f ,& \text{ }v\in \mathscr{D}'((0,T)\times G)
\\v(0)=u_0 ,& \text{ } \end{cases}
\end{equation*}where the initial data $u_0\in H^{s,\mathcal{L}}(G),$ $K(t):=K(t,x,D)\in \Psi^{m,\mathcal{L}}_{\rho,\delta}(G\times \widehat{G}),$ $f\in  L^2([0,T],H^s(G)) ,$ then we have
\begin{align*}
    \frac{\partial }{\partial t}(1+\mathcal{L})^{\frac{s}{2}} v &= (1+\mathcal{L})^{\frac{s}{2}} \frac{\partial v}{\partial t} =(1+\mathcal{L})^{\frac{s}{2}}K(t,x,D)v+(1+\mathcal{L})^{\frac{s}{2}}f\\
    &=\tilde{K}(t,x,D)(1+\mathcal{L})^{\frac{s}{2}}
v+(1+\mathcal{L})^{\frac{s}{2}}f,
\end{align*}showing that $v':= (1+\mathcal{L})^{\frac{s}{2}}
v\in L^2(G)$ and $f':=(1+\mathcal{L})^{\frac{s}{2}}f\in L^2([0,T],L^2(G))$ satisfy the initial boundary value problem
\begin{equation*} \begin{cases}\frac{\partial v'}{\partial t}=\tilde{K}(t,x,D)v'+f' ,& \text{ }v'\in \mathscr{D}'((0,T)\times G)
\\v'(0)=u'_0:=(1+\mathcal{L})^{\frac{s}{2}}u_0\in L^2(G). & \text{ } \end{cases}
\end{equation*}So, by applying Proposition \ref{Main:Th} to the family of operators $\tilde{K}(t)$ we get the estimate
\begin{equation}\label{aux:sec:2}
\Vert v'(t) \Vert^2_{L^2(G)}\leqslant  \left(C\Vert u'_0 \Vert^2_{L^2(G)}+C'\Vert f' \Vert^2_{L^2([0,T],L^2(G))} \right),
\end{equation}for every $0\leqslant t\leqslant T.$    Finally, observe that \eqref{aux:sec:2} is equivalent to the following energy inequality
\begin{equation}\label{aux:sec}
\Vert v(t) \Vert^2_{H^{s,\mathcal{L}}(G)}\leqslant  \left(C\Vert u_0 \Vert^2_{H^{s,\mathcal{L}}(G)}+C'\Vert f \Vert^2_{H^{s,\mathcal{L}}([0,T],L^2(G))} \right),
\end{equation}for every $0\leqslant t\leqslant T,$ proving Theorem \ref{Main:Th:Sob}.
\end{proof}

\subsection{Consequences: Well posedness for diffusion equations in elliptic H\"ormander classes}\label{GST:2}
This subsection will be dedicated to presenting the corresponding statements of the well-posedness for the diffusion problem  where $K(t)$ belongs to the global H\"ormander classes $\Psi^{m}_{\rho,\delta}(G\times \widehat{G}),$ for $0\leq \delta<\rho\leq 1$ developed in \cite{Ruz}. So, in particular, the results of this sub-section also cover the analysis when $K(t)$ belongs to the H\"ormander classes $\Psi^{m}_{\rho,\delta}(G,\textnormal{loc})$ defined by local coordinate systems, see H\"ormander \cite{Hormander1985III}. 
\subsubsection{The strongly elliptic case}We start with the well posedness of diffusion problems for strongly elliptic (and then, for elliptic) pseudo-differential operators.

The prototype of the operators covered by Corollary \ref{Cor:Lstrongly;ellip} are the ones with positive order $m>0$ and given by $K(t)=a(x,t)(1+\mathcal{L}_G)^{\frac{m}{2}},$ and $K(t)=a(x,t)\mathcal{L}_G^{\frac{m}{2}},$ with $a(x,t)$ being smooth in $(x,t)$ and positive. 
\begin{corollary}\label{Cor:Lstrongly;ellip}
 Let $0\leq \delta< \rho\leq 1$, $T>0$ and  $K(t)\in \Psi_{\rho, \delta}^m(G\times \widehat{G}) $  be strongly elliptic for some  $m>0$. Let  $s\in\mathbb{R}$,  $u_0\in H^{s}(G)$, $f\in L^2([0,T],H^{s}(G))$ . Then, there exists a unique solution $v$ in the space $ C^1([0,T],H^{s}(G))\bigcap C([0,T],H^{s+\frac{m}{2}}(G))$ of the Cauchy problem: \begin{equation}\label{IVP:th}(\textnormal{IVP}): \begin{cases}\frac{\partial v}{\partial t}=K(t)v+f ,& \text{ }v\in \mathscr{D}'((0,T)\times G),
\\v(0)=u_0 ,& \text{ } \end{cases}
\end{equation} Moreover, $v$ satisfies the energy estimate
\begin{equation}
\Vert v(t) \Vert^2_{H^{s}(G)}\leqslant  \left(C\Vert u_0 \Vert^2_{H^{s}(G)}+C'\Vert f \Vert^2_{L^2([0,T],H^{s}(G)))} \right),
\end{equation}for every $0\leqslant t\leqslant T.$
\end{corollary}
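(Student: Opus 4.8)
The plan is to deduce Corollary~\ref{Cor:Lstrongly;ellip} directly from Theorem~\ref{Main:Th} by specialising the subelliptic structure to the elliptic one. Concretely, I would choose the H\"ormander system $X=\{X_1,\dots,X_n\}$ to be a basis of $\mathfrak{g}$; then H\"ormander's condition holds with step $\kappa=1$, and the associated sub-Laplacian $\mathcal{L}=-(X_1^2+\cdots+X_n^2)$ coincides (for an orthonormal basis, and in general up to equivalence of norms) with the positive Laplace--Beltrami operator $\mathcal{L}_G$. For such a choice the matrix weight $\widehat{\mathcal{M}}(\xi)$ is the scalar matrix $\langle\xi\rangle I_{d_\xi}$, so the seminorm conditions \eqref{InIC}--\eqref{InIIC} defining $S^{m,\mathcal{L}}_{\rho,\delta}(G\times\widehat{G})$ reduce exactly to the inequalities \eqref{HormanderSymbolMatrix}; that is, $S^{m,\mathcal{L}}_{\rho,\delta}(G\times\widehat{G})=\mathscr{S}^m_{\rho,\delta}(G)$ with identical Fr\'echet structure, and hence $\Psi^{m,\mathcal{L}}_{\rho,\delta}(G\times\widehat{G})=\Psi^m_{\rho,\delta}(G\times\widehat{G})$.

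Under this identification the subelliptic Sobolev space $H^{s,\mathcal{L}}(G)$ equals the usual Sobolev space $H^s(G)$ with equivalent norms (since $(1+\mathcal{L})^{s/2}$ and $(1+\mathcal{L}_G)^{s/2}$ differ by an invertible zeroth-order operator), and the notion of strong $\mathcal{L}$-ellipticity becomes precisely the strong ellipticity appearing in the statement. The subelliptic G\r{a}rding inequality (Theorem~\ref{GardinTheorem}) then reads as the classical G\r{a}rding inequality for $\Psi^m_{\rho,\delta}(G\times\widehat{G})$, so all the ingredients used in the proof of Theorem~\ref{Main:Th} remain available verbatim.

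With these identifications in hand I would simply invoke Theorem~\ref{Main:Th} with $\mathcal{L}=\mathcal{L}_G$ (hence $\kappa=1$): given $0\le\delta<\rho\le1$, $m>0$, $K(t)\in C([0,T],\Psi^m_{\rho,\delta}(G\times\widehat{G}))$ strongly elliptic, $u_0\in H^s(G)$ and $f\in L^2([0,T],H^s(G))$, the theorem yields a unique $v\in C^1([0,T],H^s(G))\cap C([0,T],H^{s+m/2}(G))$ solving the Cauchy problem, together with the asserted energy estimate; rewriting $H^{s,\mathcal{L}}(G)=H^s(G)$ gives exactly the stated conclusion. I do not expect a genuine obstacle here: the analytic content is entirely contained in Theorem~\ref{Main:Th}, and the only thing to record carefully is the reduction $\kappa=1$ together with the coincidence of the two symbol classes and Sobolev scales. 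Note in particular that the restriction $\rho\ge 1-\delta$ needed for the equivalence with the locally defined classes $\Psi^m_{\rho,\delta}(G,\mathrm{loc})$ is \emph{not} required for the global statement, since the whole argument stays within the global calculus of~\cite{Ruz}.
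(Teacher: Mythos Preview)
Your proposal is correct and matches the paper's intended approach: the paper presents Corollary~\ref{Cor:Lstrongly;ellip} without proof, as the specialisation of Theorem~\ref{Main:Th} obtained by taking $\mathcal{L}=\mathcal{L}_G$ (step $\kappa=1$), which collapses the subelliptic classes $\Psi^{m,\mathcal{L}}_{\rho,\delta}$ and Sobolev spaces $H^{s,\mathcal{L}}$ to the elliptic ones $\Psi^m_{\rho,\delta}$ and $H^s$. Your careful remark that the restriction $\rho\geq 1-\delta$ is not needed here (since the argument stays within the global calculus) is a useful observation that the paper leaves implicit.
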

We also analyse the previous statement when $K(t)$ in Corollary \ref{Cor:Lstrongly;ellip}  is perturbed by a drift 
$$D=\sum_{j=1}^{n}a_{j}(x,t)X_j\in C^{\infty}([0,T], TG^{ \mathbb{C}}), \,\,a_{i}\in C^{\infty}(G\times [0,T],\mathbb{R}),\,X_j\in i\mathfrak{g}. $$
For this, see Subsection \ref{Drift:sub:sect}. Now, we are going to analyse the case where $-K(t)$ has a matrix-valued positive symbol and the order $m$ is inside of a interval determined by the subelliptic sharp G\r{a}rding inequality \cite{SharpGarding}. 
\subsubsection{The case of non-negative symbols}
Observe that in the case where the family of operators $K(t)\in \Psi^{m}_{\rho,\delta}(G\times\widehat{G}),$ that is, if we replace the sub-Laplacian $\mathcal{L}$ by the Laplace-Beltrami operator $\mathcal{L}_G$ in Theorem \ref{Main:Th:2}, the condition $$0\leq \delta<(2\kappa-1)^{-1}\rho$$ is less restrictive since $\kappa=1$ and we gain the complete range $ 0\leq \delta<\rho\leq 1.$ 
So, we have the following well-posedness result.
\begin{corollary}\label{elliptic:classes}
For  $0\leq \delta<\rho\leq 1,$
let $K(t)=K(t,x,D)\in C([0,T],\Psi^{m}_{\rho,\delta}(G\times \widehat{G}))$   be a continuous family of  pseudo-differential operators of order $m\in \mathbb{R},$ such that $$\forall\, (x,[\xi])\in G\times \widehat{G},\quad \sigma_{-\textnormal{Re }K(t)}(x,[\xi])\geq 0,\textnormal{ and }\,\quad 0\leq m\leq \rho-\delta. $$
Let   $u_0\in L^2(G)$, and let $f\in L^2([0,T],L^2(G)).$ Then there exists a unique $v\in C^1([0,T], L^2(G) ) $ solving \eqref{PVI}. Moreover, $v$ satisfies the energy estimate
\begin{equation}
\Vert v(t) \Vert^2_{L^2(G)}\leqslant  \left(C\Vert u_0 \Vert^2_{L^2(G)}+C'\Vert f \Vert^2_{L^2([0,T],L^2(G))} \right),
\end{equation}for every $0\leqslant t\leqslant T.$
\end{corollary}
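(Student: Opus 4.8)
The plan is to deduce Corollary \ref{elliptic:classes} directly from Theorem \ref{Main:Th:2} by specialising the subelliptic framework to the elliptic one. First I would recall that choosing the H\"ormander system $X=\{X_1,\dots,X_n\}$ to be a basis of the Lie algebra $\mathfrak g$ produces a system satisfying H\"ormander's condition with step $\kappa=1$, whose associated sub-Laplacian $\mathcal L=-(X_1^2+\cdots+X_n^2)$ is the Laplace-Beltrami operator $\mathcal L_G$ (for a suitable normalisation). Under this choice the matrix weight $\widehat{\mathcal M}(\xi)$ acts on each representation space as the scalar elliptic weight $\langle\xi\rangle$, so the subelliptic symbol classes ${S}^{m,\mathcal L}_{\rho,\delta}(G\times\widehat G)$ of Definition \ref{contracted''} reduce to the global H\"ormander classes $\mathscr S^m_{\rho,\delta}(G)$ of \cite{Ruz}, and the subelliptic Sobolev spaces $H^{s,\mathcal L}(G)$ coincide with the usual Sobolev spaces $H^s(G)$.

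Next I would observe that with $\kappa=1$ the admissibility constraint $0\le\delta<(2\kappa-1)^{-1}\rho$ appearing in Theorem \ref{Main:Th:2} becomes simply $0\le\delta<\rho$, so that the full range $0\le\delta<\rho\le 1$ is allowed; moreover the sharp-G\r{a}rding threshold specialises to $\varkappa=\rho/\kappa-(2-1/\kappa)\delta=\rho-\delta$, hence the admissible orders are exactly $0\le m\le\rho-\delta$, which is precisely the hypothesis of the corollary. The positivity hypothesis imposed on $\sigma_{-\textnormal{Re }K(t)}(x,[\xi])$ on every representation space is identical in the two formulations.

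With this dictionary $\mathcal L\leftrightarrow\mathcal L_G$, $\kappa\leftrightarrow 1$, $H^{s,\mathcal L}\leftrightarrow H^{s}$ in place, the statement of Corollary \ref{elliptic:classes} becomes a verbatim instance of Theorem \ref{Main:Th:2}, and existence, uniqueness in $C^1([0,T],L^2(G))$, and the energy estimate follow at once. The only point that would need a (routine) verification is that the ingredients entering the proof of Theorem \ref{Main:Th:2}, namely the subelliptic sharp G\r{a}rding inequality of Theorem \ref{MainTheorem} used inside Proposition \ref{energyestimate} together with the Sobolev boundedness and the calculus of \cite{CardonaRuzhanskyC}, genuinely collapse to their elliptic counterparts in \cite{Ruz} when $\mathcal L=\mathcal L_G$; this is immediate from the definitions. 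I do not anticipate any real obstacle here: the entire content of the corollary is already contained in Theorem \ref{Main:Th:2} once the elliptic specialisation has been identified.
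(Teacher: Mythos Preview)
Your proposal is correct and follows essentially the same approach as the paper: the paper derives Corollary \ref{elliptic:classes} by specialising Theorem \ref{Main:Th:2} to the case where the sub-Laplacian $\mathcal{L}$ is replaced by the Laplace--Beltrami operator $\mathcal{L}_G$, so that $\kappa=1$, the constraint $0\le\delta<(2\kappa-1)^{-1}\rho$ becomes $0\le\delta<\rho$, and $\varkappa=\rho-\delta$. Your write-up is in fact more detailed than the paper's own justification, which consists of a single sentence immediately preceding the corollary.
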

Moreover,  we have the following extension of Corollary \ref{elliptic:classes} to Sobolev spaces as a consequence of Theorem \ref{Main:Th:2}. 

\begin{corollary}\label{Main:Th:Sob:2}
For  $0\leq \delta<\rho\leq 1,$
let $K(t)=K(t,x,D)\in C([0,T],\Psi^{m}_{\rho,\delta}(G\times \widehat{G}))$   be a continuous family of  pseudo-differential operators of order $m\in \mathbb{R},$   such that $$\forall\, (x,[\xi])\in G\times \widehat{G},\quad \sigma_{-\textnormal{Re }\tilde{K}(t)}(x,[\xi])\geq 0, $$
where {{
$$  \tilde{K}(t)= (1+\mathcal{L}_G)^{\frac{s}{2}}K(t) (1+\mathcal{L}_G)^{-\frac{s}{2}} \textnormal{   and }\, 0\leq m\leq \varkappa  ={\rho}-\delta. $$}} Let $s\in \mathbb{R},$    $u_0\in H^s(G)$, and let $f\in L^2([0,T],H^s(G)).$ Then there exists a unique $v\in C^1([0,T], H^{s}(G) )) $ solving \eqref{PVI}. Moreover, $v$ satisfies the energy estimate
\begin{equation}\label{energyëstimate:ellipt:drift}
\Vert v(t) \Vert^2_{H^{s}(G)}\leqslant  \left(C\Vert u_0 \Vert^2_{H^{s}(G)}+C'\Vert f \Vert^2_{L^2([0,T],H^{s}(G))} \right),
\end{equation}for every $0\leqslant t\leqslant T.$
\end{corollary}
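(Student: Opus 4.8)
The plan is to reduce the statement to the $L^2$-based theory of Corollary \ref{elliptic:classes} by conjugating the Cauchy problem \eqref{PVI} with the elliptic Bessel potential $\Lambda_s:=(1+\mathcal{L}_G)^{\frac{s}{2}}$, mirroring the argument in the proof of Theorem \ref{Main:Th:Sob} with the Laplace--Beltrami operator $\mathcal{L}_G$ in place of the sub-Laplacian $\mathcal{L}$. First I would set $v':=\Lambda_s v$, $f':=\Lambda_s f$ and $u_0':=\Lambda_s u_0$. Since $\Lambda_s\colon H^{s}(G)\to L^2(G)$ is a topological isomorphism with inverse $\Lambda_{-s}$, and likewise $\Lambda_s$ is an isomorphism on the time-dependent scales $C^1([0,T],H^{s}(G))\to C^1([0,T],L^2(G))$ and $L^2([0,T],H^{s}(G))\to L^2([0,T],L^2(G))$, a function $v$ solves \eqref{PVI} with data $(u_0,f)$ in the $H^{s}$-scale if and only if $v'$ solves $\partial_t v'=\tilde K(t)v'+f'$, $v'(0)=u_0'$, where $\tilde K(t)=\Lambda_s K(t)\Lambda_{-s}$, exactly as in the computation displayed in the proof of Theorem \ref{Main:Th:Sob}.

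Next I would check that $\tilde K(t)$ again belongs to $C([0,T],\Psi^{m}_{\rho,\delta}(G\times\widehat G))$. This is where the elliptic pseudo-differential calculus of \cite{Ruz} enters: $\Lambda_{\pm s}\in\Psi^{\pm s}_{1,0}(G\times\widehat G)\subset\Psi^{\pm s}_{\rho,\delta}(G\times\widehat G)$, and the composition theorem for the classes $\Psi^{m}_{\rho,\delta}(G\times\widehat G)$ gives $\tilde K(t)\in\Psi^{m}_{\rho,\delta}(G\times\widehat G)$, the orders $+s$ and $-s$ cancelling. Continuity of $t\mapsto\tilde K(t)$ as a map into the Fr\'echet space $\Psi^{m}_{\rho,\delta}(G\times\widehat G)$ follows from the continuity of $t\mapsto K(t)$ together with the continuity of the fixed left and right composition maps $A\mapsto\Lambda_s A\Lambda_{-s}$ between Fr\'echet spaces. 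The positivity condition $\sigma_{-\textnormal{Re}\,\tilde K(t)}(x,[\xi])\geq 0$ on every representation space is precisely the hypothesis of Corollary \ref{elliptic:classes} applied to $\tilde K(t)$, and the order restriction $0\le m\le\rho-\delta$ is just \eqref{fractional:index} specialised to $\kappa=1$.

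With these verifications in place, Corollary \ref{elliptic:classes} applied to the family $\tilde K(t)$ with data $u_0'\in L^2(G)$ and $f'\in L^2([0,T],L^2(G))$ yields a unique $v'\in C^1([0,T],L^2(G))$ solving the conjugated problem, together with the energy estimate $\Vert v'(t)\Vert^2_{L^2(G)}\le C\Vert u_0'\Vert^2_{L^2(G)}+C'\Vert f'\Vert^2_{L^2([0,T],L^2(G))}$. Undoing the conjugation, $v:=\Lambda_{-s}v'\in C^1([0,T],H^{s}(G))$ is the unique solution of \eqref{PVI} with data $(u_0,f)$, and rewriting the $L^2$-norms of $v',u_0',f'$ as the $H^{s}$-norms of $v,u_0,f$ gives \eqref{energyëstimate:ellipt:drift}; uniqueness transfers back because $\Lambda_s$ is a bijection of the relevant solution spaces. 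The only point that is not pure bookkeeping is the invariance of $\Psi^{m}_{\rho,\delta}(G\times\widehat G)$ (with unchanged order and with continuous dependence on $t$) under the conjugation $K(t)\mapsto\Lambda_s K(t)\Lambda_{-s}$, but this is immediate from the composition calculus recalled in Section \ref{Sect2}; everything else follows the proof of Theorem \ref{Main:Th:Sob} verbatim.
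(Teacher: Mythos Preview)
Your proposal is correct and follows essentially the same approach as the paper: Corollary \ref{Main:Th:Sob:2} is stated without proof as the elliptic specialisation ($\kappa=1$, $\mathcal{L}=\mathcal{L}_G$) of Theorem \ref{Main:Th:Sob}, whose proof is exactly the Bessel-potential conjugation argument you give. Your explicit verification that $\tilde K(t)\in C([0,T],\Psi^{m}_{\rho,\delta}(G\times\widehat G))$ via the composition calculus is a useful detail the paper leaves implicit, but otherwise you are reproducing the intended argument.
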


\subsection{Analysis of diffusion equation for  fractional sub-Laplacians}\label{elliptic}
Now, let us use the analytic functional calculus in \cite[Chapter 8]{CardonaRuzhanskyC} to obtain a significant consequence of Corollary \ref{Cor:Lstrongly;ellip}. 
\begin{corollary}\label{subelliptic:fractiona:corollary}
Assume that $a(x,t)\in C^{\infty}(G\times [0,T],\mathbb{R}^+)$ does not have zeros and let $m>0$. Then, for any $s\in \mathbb{R}$ the diffusion problem
 \begin{equation}\label{Driff:driftLLaplam1:subelliptic}\begin{cases}\frac{\partial v}{\partial t}=a(x,t)\mathcal{L}^{\frac{m}{2}}v+f ,& \text{ }v\in \mathscr{D}'((0,T)\times G),
\\v(0)=u_0 ,& \text{ } \end{cases}
\end{equation} for initial data $u_0\in H^{s,\mathcal{L}}(G)$ and $f\in L^2([0,T],H^{s,\mathcal{L}}(G))$ admits a unique solution $v$ in the space $ C^1([0,T],H^{s,\mathcal{L}}(G))\bigcap C([0,T],H^{s+\frac{m}{2},\mathcal{L}}(G)).$ Moreover, $v$ satisfies the energy estimate
\begin{equation}
\Vert v(t) \Vert^2_{H^{s,\mathcal{L}}(G)}\leqslant  \left(C\Vert u_0 \Vert^2_{H^{s,\mathcal{L}}(G)}+C'\Vert f \Vert^2_{L^2([0,T],H^{s,\mathcal{L}}(G)))} \right),
\end{equation}for every $0\leqslant t\leqslant T.$ 
\end{corollary}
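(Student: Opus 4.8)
The plan is to identify $K(t):=a(\cdot,t)\,\mathcal{L}^{m/2}$ as a $t$-continuous, strongly $\mathcal{L}$-elliptic family of operators inside the subelliptic H\"ormander class $\Psi^{m,\mathcal{L}}_{1,0}(G\times\widehat{G})$, and then to invoke Theorem \ref{Main:Th} (of which Corollary \ref{Cor:Lstrongly;ellip} is the elliptic specialisation) verbatim.

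The first step is to place $\mathcal{L}^{m/2}$ in the calculus. Since $\mathcal{L}$ is a positive sub-Laplacian whose spectrum is discrete with an isolated bottom eigenvalue $0$, the power $\lambda\mapsto\lambda^{m/2}$ is smooth on the relevant portion of the spectrum, and the analytic functional calculus of \cite[Chapter 8]{CardonaRuzhanskyC} shows that the Fourier multiplier $\mathcal{L}^{m/2}$, whose symbol at $[\xi]$ is $\mathrm{diag}[\nu_{ii}(\xi)^{m}]$, satisfies the symbol inequalities \eqref{InIC}--\eqref{InIIC} with $(\rho,\delta)=(1,0)$; moreover this symbol agrees with $\widehat{\mathcal{M}}(\xi)^{m}$ modulo a symbol of order strictly less than $m$ (equivalently $\mathcal{L}^{m/2}=(1+\mathcal{L})^{m/2}+R$ with $R\in\Psi^{m',\mathcal{L}}_{1,0}(G\times\widehat{G})$ for some $m'<m$). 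As $a(\cdot,t)\in C^{\infty}(G)$ is real-valued, the multiplication operator $M_{a(\cdot,t)}$ has the $\xi$-independent symbol $a(x,t)I_{d_\xi}\in S^{0,\mathcal{L}}_{1,0}(G\times\widehat{G})$, so by the composition rule of Theorem \ref{calculus} we obtain $K(t)=M_{a(\cdot,t)}\circ\mathcal{L}^{m/2}\in\Psi^{m,\mathcal{L}}_{1,0}(G\times\widehat{G})$ with symbol $a(x,t)\widehat{\mathcal{M}}(\xi)^{m}$ up to an error of order $<m$.

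Next I would verify the two hypotheses of Theorem \ref{Main:Th}. The continuity of $t\mapsto K(t)$ into the Fr\'echet space $\Psi^{m,\mathcal{L}}_{1,0}(G\times\widehat{G})$ follows from $a\in C^{\infty}(G\times[0,T])$: this forces $t\mapsto a(\cdot,t)$ to be continuous into $C^{\infty}(G)$, hence $t\mapsto M_{a(\cdot,t)}$ to be continuous into $\Psi^{0,\mathcal{L}}_{1,0}$, and right-composition with the fixed operator $\mathcal{L}^{m/2}$ preserves continuity by Theorem \ref{calculus}. For strong $\mathcal{L}$-ellipticity, the assumption that $a$ is real-valued and nowhere zero on the compact set $G\times[0,T]$ gives $|a(x,t)|\geq c_0>0$ there; together with the previous paragraph this yields $\sigma_{\mathrm{Re}\,K(t)}(x,[\xi])\geq C\,\widehat{\mathcal{M}}(\xi)^{m}$ for all $(x,[\xi])\in G\times\widehat{G}$ and all $t\in[0,T]$ (the order-$<m$ discrepancy, including the contribution of the one-dimensional trivial representation on which $\widehat{\mathcal{L}}$ vanishes, being absorbed into the constant and into the $-C_2\Vert\cdot\Vert_{L^2}^{2}$ term of the G\r{a}rding inequality of Theorem \ref{GardinTheorem}). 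Thus $K(t)$ is strongly $\mathcal{L}$-elliptic of order $m$.

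With $K(t)\in C([0,T],\Psi^{m,\mathcal{L}}_{1,0}(G\times\widehat{G}))$ strongly $\mathcal{L}$-elliptic, Theorem \ref{Main:Th} applies directly with $(\rho,\delta)=(1,0)$ and $m>0$, giving for every $s\in\mathbb{R}$, $u_0\in H^{s,\mathcal{L}}(G)$ and $f\in L^{2}([0,T],H^{s,\mathcal{L}}(G))$ a unique $v\in C^{1}([0,T],H^{s,\mathcal{L}}(G))\cap C([0,T],H^{s+m/2,\mathcal{L}}(G))$ solving \eqref{Driff:driftLLaplam1:subelliptic}, together with the stated energy estimate. I expect the only substantive point to be the first step — that the spectrally defined fractional power $\mathcal{L}^{m/2}$ truly belongs to $\Psi^{m,\mathcal{L}}_{1,0}(G\times\widehat{G})$ and carries an elliptic principal symbol — which is precisely what the functional calculus of \cite[Chapter 8]{CardonaRuzhanskyC} provides; everything after that is bookkeeping inside the subelliptic calculus of Theorem \ref{calculus}.
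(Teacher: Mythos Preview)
Your proposal is correct and follows essentially the same route as the paper: use the functional calculus of \cite[Chapter~8]{CardonaRuzhanskyC} to place $\mathcal{L}^{m/2}$ in $\Psi^{m,\mathcal{L}}_{1,0}(G\times\widehat{G})$, observe that left multiplication by the nowhere-vanishing smooth function $a(\cdot,t)$ preserves (strong) $\mathcal{L}$-ellipticity, and then appeal to the well-posedness result for strongly $\mathcal{L}$-elliptic $K(t)$. Your write-up is in fact more careful than the paper's own short argument --- the paper cites Corollary~\ref{Cor:Lstrongly;ellip} (the elliptic specialisation, stated for $H^{s}(G)$) where Theorem~\ref{Main:Th} is what is actually required for the subelliptic spaces $H^{s,\mathcal{L}}(G)$, and it does not spell out the $t$-continuity or the handling of the lower-order discrepancy between $\mathcal{L}^{m/2}$ and $\widehat{\mathcal{M}}^{m}$; the details you supply for these points are exactly right.
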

\begin{proof}In view of the subelliptic functional calculus  \cite[Chapter 8]{CardonaRuzhanskyC} we have that 
$
  \forall z\in \mathbb{C},  \mathcal{L}^{\frac{z}{2}}\in \Psi^{ \mathsf{Re}(z),\mathcal{L}}_{1,0}(G\times \widehat{G}), 
$ and in consequence for $z=m+0i>0,$ $\mathcal{L}^{\frac{m}{2}}\in \Psi^{m,\mathcal{L}}_{1,0}(G\times \widehat{G}).$ Since the smooth function $a(x,t)$ does not have zeroes, the $\mathcal{L}$-ellipticity of the operator $\mathcal{L}^{\frac{m}{2}}$ is preserved when multiplying it on the left with the function $a(x,t).$ So, the operator $K(t)=a(x,t)\mathcal{L}^{\frac{m}{2}}$ is $\mathcal{L}$-elliptic and the statement in Corollary \ref{subelliptic:fractiona:corollary} now follows from Corollary \ref{Cor:Lstrongly;ellip}.
\end{proof}

\subsection{Analysis of diffusion equations for fractional Laplacians with drift}\label{Drift:sub:sect}
We now state the well-posedness for the fractional diffusion with drift on compact Lie groups. To do so, let us consider the fractional diffusion problem 
\begin{equation}\label{Driff:drift}\begin{cases}\frac{\partial v}{\partial t}=(a(x,t)\mathcal{L}_{G}^{\frac{m}{2}}+D)v+f ,& \text{ }v\in \mathscr{D}'((0,T)\times G),
\\v(0)=u_0 ,& \text{ } \end{cases}
\end{equation} 
where  $0\leq m\leq 1.$ If $  D\equiv 0 ,$ we are in the case of a fractional diffusion equation for the fractional Laplacian. However, if  $$D=\sum_{j=1}^{n}a_{j}(x,t)X_j\in C^{\infty}([0,T], TG^{ \mathbb{C}}), \,\,a_{i}\in C^{\infty}(G\times [0,T],\mathbb{R}),\,X_j\in i\mathfrak{g}, $$
 is a differential operator of first order we are in the case of a diffusion equation with drift $D$. We have denoted by $TG^{ \mathbb{C}}$ to the complexification of the tangent space $$TG^{ \mathbb{C}}\cong TG\oplus i TG\cong (G\times G)\times (\mathfrak{g}\otimes i\mathfrak{g}).$$
 
For  $m=1,$
the previous model is an analogue on compact Lie groups  of  the quasi-geostrophic Euclidean case that have been considered e.g.  by Caffarelli and Vasseur in  \cite{caf:annals}, where it was considered the case of a differential operator $$D=v\cdot \nabla=\sum_{i=1}^n{v_i}\partial_{x_i},$$ of first order with minimal smoothness condition on $v$.

\begin{remark}
The matrix-valued symbol of $K(t)=a(x,t)\mathcal{L}_{G}^{\frac{m}{2}}+D$ is given by
\begin{equation}
    \sigma_{K(t)}(x,[\xi])=a(x,t)\lambda_{[\xi]}^{\frac{m}{2}} I_{d_\xi\times d_\xi}+\sum_{i=1}a_{i}(x)(X_{i}\xi)(e).
\end{equation} The hypothesis $X_j\in i\mathfrak{g}$ implies that we can choose an element $\xi\in [\xi]$ such that  $\sigma_{X_j}(\xi)=X_j\xi(e)$ can be diagonalised with diagonal entries determining the spectrum of the symbol $\sigma_{X_j}(\xi)$ at any representation space. The smooth functions $a_i$ have been considered real-valued, and if we want that the symbol 
\begin{equation}
    \sigma_{-K(t)}(x,[\xi])=-a(x,t)\lambda_{[\xi]}^{\frac{m}{2}} I_{d_\xi\times d_\xi}-\sum_{i=1}a_{i}(x,t)(X_{i}\xi)(e),
\end{equation} becomes a positive matrix on any representation space is natural to assume that $-a(x,t)\in C^{\infty}(G,\mathbb{R}^+_0)$ is smooth and bounded from below, that is $-a(x,t)\geq 0.$
\end{remark}
In view of Corollaries \ref{elliptic:classes} and  \ref{Main:Th:Sob:2} we have the following well posedness result.
\begin{corollary}\label{Corollary:general:drift} Let $0\leq m\leq 1.$ Let $a(x,t)\in C^{\infty}(G,\mathbb{R}^+_0)$ be a  smooth function such that $a(x,t)\leq 0.$ Let $X_j\in i\mathfrak{g}$ be a family of symmetric vector-fields and for for any $1\leq i\leq n,$ let $a_{i}\in C^{\infty}(G\times [0,T],\mathbb{R})$ be a set of real-valued smooth functions. If, for any $0\leq t\leq T,$ the (drift) first-order differential operator
\begin{equation}\label{general:drift}
    D=\sum_{j=1}^na_j(x,t)X_j\in TG^{\mathbb{C}}\cong (G\times G)\times \mathfrak{g}\otimes i\mathfrak{g},
\end{equation} is such that
\begin{equation}\label{Positivity:condition}
  \sigma_{-K(t)}(x,[\xi])=-a(x,t)\lambda_{[\xi]}^{\frac{m}{2}} I_{d_\xi\times d_\xi}-\sum_{i=1}a_{i}(x,t)(X_{i}\xi)(e)\geq 0,
\end{equation}for any $(x,[\xi])\in G\times\widehat{G}.$
Then, for any  $s\in \mathbb{R},$ $T>0,$    $u_0\in L^2(G)$, and  $f\in L^2([0,T],L^2(G)),$  there exists a unique $v\in C^1([0,T], L^2(G) )) $ solving \eqref{Driff:drift}. Moreover, $v$ satisfies the energy estimate
\begin{equation}\label{eq:energy:fracto}
\Vert v(t) \Vert^2_{L^{2}(G)}\leqslant  \left(C\Vert u_0 \Vert^2_{L^{2}(G)}+C'\Vert f \Vert^2_{L^2([0,T],L^{2}(G))} \right),
\end{equation}for every $0\leqslant t\leqslant T.$  Moreover,  the existence of the solution $v$ in  \eqref{Driff:drift} with $u_0\in H^s(G)$, and  $f\in L^2([0,T],H^s(G)),$ and the energy estimate in \eqref{energyëstimate:ellipt:drift} still are valid if  $D$ is an element of the complex universal enveloping algebra $\mathfrak{U}(\mathfrak{g})\oplus i \mathfrak{U}(\mathfrak{g}) $ and the positivity condition in \eqref{Positivity:condition} holds.
\end{corollary}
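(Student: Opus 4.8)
I would deduce this from Corollary~\ref{elliptic:classes} (for $L^2$ data) and Corollary~\ref{Main:Th:Sob:2} (for $H^s$ data and the enveloping-algebra version), applied to $K(t)=a(x,t)\mathcal L_G^{m/2}+D$. Since $\mathcal L_G$ is elliptic we are in the case $\kappa=1$, so $\varkappa=\rho-\delta$, and I would take $\rho=1$, $\delta=0$, which puts the admissible order window at $[0,\rho-\delta]=[0,1]$. The first task is to check that $K(\cdot)$ is a continuous family in $\Psi^{1}_{1,0}(G\times\widehat G)$ of order at most $1$. By the functional calculus of \cite{CardonaRuzhanskyC} used in the proof of Corollary~\ref{subelliptic:fractiona:corollary} (with $\mathcal L=\mathcal L_G$, for which the subelliptic class reduces to the elliptic class), one has $\mathcal L_G^{m/2}\in\Psi^{m}_{1,0}(G\times\widehat G)$; multiplication by $a(\cdot,t)\in C^\infty(G)$ lies in $\Psi^{0}_{1,0}(G\times\widehat G)$; and $D=\sum_j a_j(\cdot,t)X_j\in\mathrm{Diff}^{1}(G)\subset\Psi^{1}_{1,0}(G\times\widehat G)$. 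Hence $K(t)\in\Psi^{1}_{1,0}(G\times\widehat G)$ with $0\leq 1\leq\rho-\delta$, and the joint smoothness of $a,a_j$ in $(x,t)$ yields $K(\cdot)\in C([0,T],\Psi^{1}_{1,0}(G\times\widehat G))$.

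The heart of the matter is to match the positivity hypothesis. By the Remark preceding the corollary, $\sigma_{-K(t)}(x,[\xi])=-a(x,t)\lambda_{[\xi]}^{m/2}I_{d_\xi}-\sum_i a_i(x,t)(X_i\xi)(e)$. I would first observe that this matrix is Hermitian on every representation space: the first summand is a real scalar multiple of the identity, and $(X_i\xi)(e)=\sigma_{X_i}(\xi)$ is Hermitian because $X_i\in i\mathfrak g$ is symmetric — indeed, by the same Remark it is even diagonalisable with real spectrum on each representation space. Thus \eqref{Positivity:condition} is meaningful, and since $\sigma_{-K(t)}$ is Hermitian it coincides with the quantity $\sigma_{-\mathsf{Re}\,K(t)}$ of Corollary~\ref{elliptic:classes} up to an operator of order $\leq 0$: the skew-adjoint part $\tfrac12(K(t)-K(t)^*)$ only contributes the commutators $[\mathcal L_G^{m/2},a]$ (of order $m-1\leq 0$) and $X_i a_i$ (of order $0$). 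Consequently $\sigma_{-\mathsf{Re}\,K(t)}(x,[\xi])\geq -C$ for a constant $C\geq 0$ uniform in $t$, so that $\sigma_{-\mathsf{Re}(K(t)-C)}(x,[\xi])\geq 0$. Finally, $v$ solves \eqref{Driff:drift} if and only if $w:=e^{-Ct}v$ solves the same Cauchy problem with $K(t)-C$ and $e^{-Ct}f$ in place of $K(t)$ and $f$ and the same initial datum; this substitution preserves both the well-posedness statement and the shape of the energy estimate, the factor $e^{2Ct}\leq e^{2CT}$ being absorbed into the constants.

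With these reductions in place, Corollary~\ref{elliptic:classes} applies to $K(t)-C$ with $u_0\in L^2(G)$, $f\in L^2([0,T],L^2(G))$ and produces the unique $w\in C^1([0,T],L^2(G))$ together with the energy inequality; then $v:=e^{Ct}w$ is the unique solution of \eqref{Driff:drift} and \eqref{eq:energy:fracto} follows. For $u_0,f\in H^s(G)$, and more generally for a drift $D$ in $\mathfrak U(\mathfrak g)\oplus i\,\mathfrak U(\mathfrak g)$ of order at most one (so that $K(t)$ stays within the admissible order window), I would repeat the argument after conjugating: with $\tilde K(t)=(1+\mathcal L_G)^{s/2}K(t)(1+\mathcal L_G)^{-s/2}$ as in the proof of Theorem~\ref{Main:Th:Sob}, conjugation by the elliptic operator $(1+\mathcal L_G)^{s/2}$ (whose symbol is an $x$-independent scalar on each representation block) keeps $\tilde K(t)\in\Psi^{1}_{1,0}(G\times\widehat G)$ and alters its symbol only by an operator of order $\leq 0$, so again $\sigma_{-\mathsf{Re}\,\tilde K(t)}$ differs from $\sigma_{-K(t)}$ by a bounded error; after the same constant shift and rescaling, Corollary~\ref{Main:Th:Sob:2} delivers existence and uniqueness in $C^1([0,T],H^{s}(G))$ with the corresponding energy estimate. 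When $D$ lies in $\mathfrak U(\mathfrak g)\oplus i\,\mathfrak U(\mathfrak g)$ its symbol $\sigma_D(\xi)$ simply replaces $\sum_i a_i(x,t)(X_i\xi)(e)$ in \eqref{Positivity:condition}, and the key fact that $K(t)$ agrees with its self-adjoint part modulo an operator of order $\leq 0$ is unchanged.

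The only nonroutine step is the positivity reconciliation: verifying that $\sigma_{-K(t)}$ is genuinely Hermitian, which is precisely where the hypothesis that the $X_j\in i\mathfrak g$ are symmetric enters, and that the discrepancy between the Hermitian part of the symbol and the symbol of the self-adjoint part of $K(t)$ is of order zero and hence can be neutralised by the exponential rescaling $v\mapsto e^{-Ct}v$. Everything else is bookkeeping within the symbolic calculus recalled in Section~\ref{Sect2}.
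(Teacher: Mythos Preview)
Your approach is correct and follows the same skeleton as the paper's proof: invoke Corollary~\ref{elliptic:classes} for the $L^2$ statement and Corollary~\ref{Main:Th:Sob:2} for the $H^s$/enveloping-algebra statement. The paper's own proof is considerably terser---it simply asserts that the positivity hypothesis \eqref{Positivity:condition} places one directly in the setting of those corollaries, without discussing the discrepancy between $\sigma_{-K(t)}\geq 0$ and the hypothesis $\sigma_{-\mathsf{Re}\,K(t)}\geq 0$ actually required there.

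Your argument is more careful on exactly this point: you observe that the symbol $\sigma_{-K(t)}$ is Hermitian (this is where $X_j\in i\mathfrak g$ enters), that it differs from $\sigma_{-\mathsf{Re}\,K(t)}$ only by terms of order $\leq 0$ coming from the asymptotic expansion of the adjoint, and you absorb the resulting bounded defect via the rescaling $v\mapsto e^{-Ct}v$. This is a clean and standard device that the paper omits. For the $H^s$ part, the paper argues that when $D\in\mathfrak U(\mathfrak g)\oplus i\,\mathfrak U(\mathfrak g)$ the operator $K(t)$ \emph{commutes} with $\mathcal L_G$ (so that $\tilde K(t)=K(t)$ exactly), whereas you use the weaker fact that conjugation by $(1+\mathcal L_G)^{s/2}$ perturbs $K(t)$ only at order $\leq 0$ and then rescale again. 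Your route is slightly more robust, since the paper's commutation claim tacitly requires the multiplication operator $a(\cdot,t)$ to commute with $\mathcal L_G$ as well; your lower-order-commutator argument sidesteps that issue.
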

 \begin{proof}
 Observe that if $0\leq m\leq 1,$ and {{ $D$ is a left-invariant differential operator of first order as}} in \eqref{general:drift}, under the positivity condition in \eqref{Positivity:condition} the well posedness of  \eqref{Driff:drift} and the validity of the energy estimate in \eqref{eq:energy:fracto} follow from   Corollary \ref{elliptic:classes}. 
 
 In the case where  $D$ is an element of the universal enveloping algebra $\mathfrak{U}(\mathfrak{g})\oplus i \mathfrak{U}(\mathfrak{g}),$ that is, $D$ has constant coefficients, $K(t)$ commutes with the Laplacian $\mathcal{L}_G,$ indeed, $$\mathcal{L}_G\in \textnormal{center}[\mathfrak{U}(\mathfrak{g})\oplus i \mathfrak{U}(\mathfrak{g})],$$ and the well posedness of \eqref{Driff:drift} with $u_0\in H^s(G)$, and  $f\in L^2([0,T],H^s(G)),$  and the validity of the  energy estimate in \eqref{energyëstimate:ellipt:drift} remain valid in view of Corollary \ref{Main:Th:Sob:2}.
 \end{proof}
{{The generalization for all $m>1,$ in the Corollary \ref{general:drift:17}  removes the symmetry condition $X_j\in i\mathfrak{g},$ and the positivity condition in \eqref{Positivity:condition},}} allowing an arbitrary and smooth drift $D$  on $(0,T)$ and taking values on $TG^{\mathbb{C}}.$ This  application of Corollary \ref{Cor:Lstrongly;ellip} can be announced as follows.
\begin{corollary}\label{general:drift:17}Let 
$
    D\equiv D(t)=\sum_{j=1}^na_j(x,t)X_j:[0,T]\rightarrow TG^{\mathbb{C}},
$  be a smooth function and let us consider the (modified) fractional Laplacian with drift $D$
 \begin{equation}
     K(t)=a(x,t)\mathcal{L}_{G}^{\frac{m}{2}}+D,
 \end{equation} where  $m>1.$ Assume that $a(x,t)\in C^{\infty}(G\times [0,T],\mathbb{R}^{+})$ does not have zeros. Then, for any $s\in \mathbb{R}$ the diffusion problem
 \begin{equation}\label{Driff:driftLLaplam1}\begin{cases}\frac{\partial v}{\partial t}=(a(x,t)\mathcal{L}_{G}^{\frac{m}{2}}+D)v+f ,& \text{ }v\in \mathscr{D}'((0,T)\times G)
\\v(0)=u_0 ,& \text{ } \end{cases},
\end{equation} for initial data $u_0\in H^{s}(G)$ and $f\in L^2([0,T],H^{s}(G))$ admits a unique solution $v$ in the space $ C^1([0,T],H^{s}(G))\bigcap C([0,T],H^{s+\frac{m}{2}}(G)).$ Moreover, $v$ satisfies the energy estimate
\begin{equation}
\Vert v(t) \Vert^2_{H^{s}(G)}\leqslant  \left(C\Vert u_0 \Vert^2_{H^{s}(G)}+C'\Vert f \Vert^2_{L^2([0,T],H^{s}(G)))} \right),
\end{equation}for every $0\leqslant t\leqslant T.$ 
\end{corollary}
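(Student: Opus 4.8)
The plan is to deduce Corollary~\ref{general:drift:17} directly from the strongly elliptic result Corollary~\ref{Cor:Lstrongly;ellip}, by verifying that for $m>1$ the operator $K(t)=a(x,t)\mathcal{L}_G^{m/2}+D(t)$ is itself a continuous family of strongly elliptic pseudo-differential operators of order $m$ in $\Psi^{m}_{1,0}(G\times\widehat G)$. The mechanism is that, in contrast with the range $0\le m\le1$ covered by Corollary~\ref{Corollary:general:drift}, the drift $D(t)$ now has order $1<m$, so it is a \emph{subprincipal} perturbation of the leading term $a(x,t)\mathcal{L}_G^{m/2}$: it affects neither the order, nor the ellipticity, nor the sign structure of the leading symbol, and in particular no positivity condition on the symbol of $-K(t)$ (such as \eqref{Positivity:condition}) is required.

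First I would recall, from the analytic functional calculus for the Laplace--Beltrami operator (as in \cite[Chapter~8]{CardonaRuzhanskyC} and \cite{Ruz,RuzhanskyWirth2014}), that $\mathcal{L}_G^{m/2}\in\Psi^{m}_{1,0}(G\times\widehat G)$ with diagonal matrix symbol $\sigma_{\mathcal{L}_G^{m/2}}(x,[\xi])=\lambda_{[\xi]}^{m/2}I_{d_\xi}$; since $a\in C^\infty(G\times[0,T],\mathbb{R}^{+})$ has no zeros, compactness of $G\times[0,T]$ gives $a(x,t)\ge c_0>0$, and left multiplication by $a$ keeps the operator strongly elliptic of order $m$. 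Next I would note that $D(t)=\sum_{j=1}^{n}a_j(x,t)X_j$ with $a_j\in C^\infty(G\times[0,T])$ and the $X_j$ left-invariant is a first-order differential operator depending smoothly on $t$, so $D\in C^\infty([0,T],\Psi^{1}_{1,0}(G\times\widehat G))$, with matrix symbol $\sum_j a_j(x,t)\,\sigma_{X_j}(\xi)$ of operator norm $O(\langle\xi\rangle)$ uniformly in $(x,t)$. Because $1<m$ we have $\Psi^{1}_{1,0}\subset\Psi^{m}_{1,0}$, whence $K(t)=a(x,t)\mathcal{L}_G^{m/2}+D(t)\in C([0,T],\Psi^{m}_{1,0}(G\times\widehat G))$, the class required by Corollary~\ref{Cor:Lstrongly;ellip} (with $\rho=1$, $\delta=0$).

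The only inequality that needs some care is strong ellipticity of the \emph{sum}. Writing $\mathsf{Re}\,\sigma_{-K(t)}(x,[\xi])=\mathsf{Re}\,\sigma_{-a(x,t)\mathcal{L}_G^{m/2}}(x,[\xi])-\mathsf{Re}\,\sigma_{D(t)}(x,[\xi])$, the first term dominates $c_0\langle\xi\rangle^{m}I_{d_\xi}$ away from the trivial representation while the second has operator norm at most $C_1\langle\xi\rangle$; since $m>1$, there is $R>0$ with $c_0\langle\xi\rangle^{m}-C_1\langle\xi\rangle\ge\tfrac12 c_0\langle\xi\rangle^{m}$ for $\langle\xi\rangle\ge R$, so
\[
\mathsf{Re}\,\sigma_{-K(t)}(x,[\xi])\ \ge\ \tfrac12 c_0\,\langle\xi\rangle^{m}\,I_{d_\xi},\qquad \langle\xi\rangle\ge R,\ (x,t)\in G\times[0,T].
\]
The finitely many representations with $\langle\xi\rangle<R$ are absorbed by replacing $K(t)$ with $K(t)-\lambda I$ for $\lambda>0$ large enough — equivalently, by the elementary substitution $v=e^{\lambda t}w$ — which makes the shifted family strongly elliptic of order $m$ on all of $\widehat G$ and changes neither the existence-uniqueness statement on the finite interval $[0,T]$ nor the form of the energy estimate (the factor $e^{2\lambda T}$ is absorbed into $C,C'$).

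Finally I would apply Corollary~\ref{Cor:Lstrongly;ellip} to the strongly elliptic family $K(t)-\lambda I\in C([0,T],\Psi^{m}_{1,0}(G\times\widehat G))$, $m>0$, with data $u_0\in H^s(G)$ and $f\in L^2([0,T],H^s(G))$: this produces a unique $w\in C^1([0,T],H^{s}(G))\cap C([0,T],H^{s+m/2}(G))$ solving the shifted Cauchy problem with the asserted energy estimate, and undoing the substitution gives the solution $v$ of \eqref{Driff:driftLLaplam1} in the same space with the claimed bound. The step I expect to be the main obstacle is the one in the preceding paragraph: making the subprincipal domination of the $x$- and $t$-dependent drift by the leading term quantitative and uniform on $G\times[0,T]$, and disposing of the low-frequency cluster; once that is in place the result is a direct corollary of the strongly elliptic theory developed above.
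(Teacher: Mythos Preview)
Your proposal is correct and follows essentially the same approach as the paper: the paper simply states that Corollary~\ref{general:drift:17} is ``an application of Corollary~\ref{Cor:Lstrongly;ellip}'' without further argument, and you supply exactly the details this entails---namely, that $\mathcal{L}_G^{m/2}\in\Psi^{m}_{1,0}$ by the functional calculus, that multiplication by the nonvanishing $a(x,t)$ preserves (strong) ellipticity, and that since $m>1$ the first-order drift $D(t)$ is a subprincipal perturbation which cannot affect the leading-order ellipticity. Your additional care with the low-frequency cluster via the substitution $v=e^{\lambda t}w$ is a reasonable way to make the strong-ellipticity lower bound global on $\widehat G$; the paper does not spell this out, but it is the standard device and does not represent a different route.
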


\subsection{Examples: diffusion problems with  drift for the Laplacian on $\textnormal{SU}(2)$}\label{Ex:SU2section}
Let us consider the compact Lie group of complex unitary $2\times 2$-matrices  
$$ \textnormal{SU}(2)=\{X=[X_{ij}]_{i,j=1}^{2}\in \mathbb{C}^{2\times 2}:X^{*}=X^{-1}\},\,X^*:=\overline{X}^{t}=[\overline{X_{ji}}]_{i,j=1}^{2}. $$
Let us consider the left-invariant first-order  differential operators $$\partial_{+},\partial_{-},\partial_{0}: C^{\infty}(\textnormal{SU}(2))\rightarrow C^{\infty}(\textnormal{SU}(2)),$$ called creation, annihilation, and neutral operators respectively, (see Definition 11.5.10 of \cite{Ruz}) and let us define 
\begin{equation}\label{X3}
    X_{1}=-\frac{i}{2}(\partial_{-}+\partial_{+}),\, X_{2}=\frac{1}{2}(\partial_{-}-\partial_{+}),\, X_{3}=-i\partial_0,
\end{equation}where $X_{3}=[X_1,X_2],$ based on the commutation relations $[\partial_{0},\partial_{+}]=\partial_{+},$ $[\partial_{-},\partial_{0}]=\partial_{-},$ and $[\partial_{+},\partial_{-}]=2\partial_{0},$ the system $X=\{X_1,X_2\}$ satisfies the H\"ormander condition at step $\kappa=2.$

We record that the unitary dual of $\textnormal{SU}(2)$ (see \cite{Ruz}) can be identified as
\begin{equation}
\widehat{\textnormal{SU}}(2)\equiv \{ [\xi^{l}]:2l\in \mathbb{N}, d_{l}:=\dim \xi^{l}=(2l+1)\}\sim ({1}/{2})\mathbb{N}.
\end{equation}
There are explicit formulae for $\xi^{l}$ as
functions of Euler angles in terms of the so-called Legendre-Jacobi polynomials, see \cite{Ruz}.

Consider the basis  $\{X_1,X_2,X_3=[X_1,X_2]\}$ of the Lie agebra $\mathfrak{su}(2).$ The sum of squares
\begin{equation}
    \mathcal{L}_{\textnormal{SU}(2)}=-X_1^2-X_2^2-X_3^2=-\partial_0^2-\frac{1}{2}[\partial_+\partial_{-}+\partial_{-}\partial_{+}],
\end{equation} is the positive Laplacian on $\textnormal{SU}(2).$
Again, by following e.g.  \cite{Ruz},  the spectrum of the positive Laplacian $\mathcal{L}_{\textnormal{SU}(2)}$ can be indexed by the sequence
$$\lambda_\ell:= \ell(\ell+1),\quad \ell\in ({1}/{2})\mathbb{N}.$$
Next, we  illustrate that the positivity condition in Corollary \ref{general:drift}. Consider the problem with drift

\begin{equation}\label{Driff:drift:SU2}\begin{cases}\frac{\partial v}{\partial t}=(a(x,t)\mathcal{L}_{\textnormal{SU}(2)}^{\frac{m}{2}}+ia_3(x,t)X_3)v+f ,& \text{ }v\in \mathscr{D}'((0,T)\times \textnormal{SU}(2)),
\\v(0)=u_0 .& \text{ } \end{cases}
\end{equation} For $m>1,$ if $a(x,t)\in C^\infty(\textnormal{SU}(2)\times [0,T],\mathbb{R}^{+})$ does not have zeros, Corollary \ref{general:drift:17} implies that \eqref{Driff:drift:SU2} is well posed if $u_0\in {H}^s(\textnormal{SU}(2))$ and $f\in L^2([0,T], {H}^s(\textnormal{SU}(2))$ with arbitrary regularity order $s\in \mathbb{R}.$ 

Now, if $0\leq m\leq 1,$ note that the symbol of 
$$-K(t)= a(x,t)\mathcal{L}_{\textnormal{SU}(2)}^{\frac{m}{2}}+ia_3(x,t)X_3  $$
can be computed in diagonal form into a representative $\xi^{\ell}\in [\xi^{\ell}],$ as
\begin{align}\label{positivity:SU2}
    \sigma_{-K(t)}(x,\xi^{\ell})=\textnormal{diag}[-a(x,t)(\ell^2+\ell)^{\frac{m}{2}}-a_{3}(x,t)j_{1}]_{-\ell\leq j_{1},j_2\leq \ell,\,j_i\in (1/2)\mathbb{N}}.
\end{align}Now, note that $\sigma_{-K(t)}(x,\xi^{\ell})\geq 0$ for all $\ell \in (1/2)\mathbb{N},$ if $a_3\equiv 0$ and $0\leq m\leq 1,$ or for $m=1,$ if
\begin{equation}
  a(x,t)\leq 0, \, |a_{3}(x,t)|+a(x,t)\leq0.
\end{equation}
Indeed, the condition in \eqref{positivity:SU2} with $\ell=0=j_1$ implies that $-a(x,t)\geq 0$ for all  $(x,t)\in G\times [0,T].$ If $0\leq m<1,$ note that 
\begin{equation}
    -a(x,t)(\ell^2+\ell)^{\frac{m}{2}}-a_{3}(x,t)j_{1}\geq  -a(x,t)(\ell^2+\ell)^{\frac{m}{2}}-|a_{3}(x,t)|\ell\geq0
\end{equation} for all $\ell\in (1/2)\mathbb{N}$ if and only if $a_3\equiv 0.$ On the other hand, for $m=1,$ 
$$ \forall \ell, -a(x,t)(\ell^2+\ell)^{\frac{m}{2}}-|a_{3}(x,t)|\ell\geq0  $$ if and only if $$\forall \ell,\,-a(x,t)(1+(1/\ell))^{\frac{1}{2}}\geq |a_{3}(x,t)|,$$ from which is required that $|a_3(x,t)|+a(x,t)\leq 0.$ The analysis above and Corollary \ref{general:drift:17} implies the following well posedness result for \eqref{eq:energy:fracto:SU2} showing that the positivity criterion in \eqref{Positivity:condition} is in certain cases verifiable.
\begin{corollary}\label{Corollary:general:drift:su2SU2} Let $0\leq m\leq 1.$ Let $a(x,t)\in C^{\infty}(\textnormal{SU}(2),\mathbb{R}^+_0)$ be a  smooth function such that $a(x,t)\leq 0.$ Let $a_{3}\in C^{\infty}(\textnormal{SU}(2)\times [0,T],\mathbb{R})$ be a real-valued smooth function. If $m=1$, and for any $0\leq t\leq T,$ and all $x\in \textnormal{SU}(2),$
\begin{equation}
   |a_3(x,t)|+a(x,t)\leq 0,
\end{equation} 
then, for any    $u_0\in L^2(\textnormal{SU}(2))$, and  $f\in L^2([0,T],L^2(\textnormal{SU}(2))),$  there exists a unique $v\in C^1([0,T], L^2(\textnormal{SU}(2)) )) $ solving the problem
\begin{equation}\label{Driff:drift:SU2:th}\begin{cases}\frac{\partial v}{\partial t}=(a(x,t)\mathcal{L}_{\textnormal{SU}(2)}^{\frac{m}{2}}+a_3(x,t)iX_3)v+f ,& \text{ }v\in \mathscr{D}'((0,T)\times \textnormal{SU}(2)),
\\v(0)=u_0 ,& \text{ } \end{cases}
\end{equation}where $X_3=-i\partial_0$ is as in \eqref{X3}.
Moreover, $v$ satisfies the energy estimate
\begin{equation}\label{eq:energy:fracto:SU2}
\Vert v(t) \Vert^2_{L^{2}(\textnormal{SU}(2))}\leqslant  \left(C\Vert u_0 \Vert^2_{L^{2}(\textnormal{SU}(2))}+C'\Vert f \Vert^2_{L^2([0,T],L^{2}(\textnormal{SU}(2))} \right),
\end{equation}for every $0\leqslant t\leqslant T.$  Moreover,  the existence of the solution $v$ in  \eqref{Driff:drift:SU2} with $u_0\in H^s(\textnormal{SU}(2))$, and  $f\in L^2([0,T],H^s(\textnormal{SU}(2))),$ and the energy estimate in \eqref{eq:energy:fracto:SU2} still are valid if $a(x,t)\equiv c_1$ and $a_{3}(x,t)\equiv a_3$ are (non-trivial) constant functions (at the same time) such that $|a_3|+c_1\leq 0$ with $c_1\leq 0.$ On the other hand, the conclusions above still remain valid when $0\leq m<1$ if $a_3\equiv 0.$
\end{corollary}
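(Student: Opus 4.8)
The plan is to realize \eqref{Driff:drift:SU2:th} as a concrete instance of the fractional diffusion equation with drift handled in Corollary \ref{Corollary:general:drift} (and, in the constant-coefficient case, of its Sobolev version, Corollary \ref{Main:Th:Sob:2}), so that the whole task collapses to verifying the positivity condition \eqref{Positivity:condition} for the matrix symbol of $-K(t)$. To set this up I record the structural data: on $G=\textnormal{SU}(2)$ the operator $\mathcal{L}_{\textnormal{SU}(2)}=-X_1^2-X_2^2-X_3^2$ is the Laplace--Beltrami (Casimir) operator, the drift is $D=a_3(x,t)(iX_3)$ with $iX_3=\partial_0\in i\mathfrak{su}(2)$ a symmetric left-invariant vector field and $a_3$ real-valued, and $a(x,t)\le 0$. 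Since the relevant calculus is the elliptic one $\Psi^{m}_{1,0}(G\times\widehat{G})$ (so $\rho=1$, $\delta=0$, $\kappa=1$), the order constraint $0\le m\le\rho-\delta=1$ required by Corollary \ref{Corollary:general:drift} is precisely the hypothesis $0\le m\le 1$.

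The next step is to compute $\sigma_{-K(t)}$. On $\textnormal{SU}(2)$ one has $\widehat{\mathcal{L}_{\textnormal{SU}(2)}}(\xi^{\ell})=\ell(\ell+1)I_{d_\ell}$, and hence, by the analytic functional calculus of \cite{CardonaRuzhanskyC}, $\widehat{\mathcal{L}_{\textnormal{SU}(2)}^{m/2}}(\xi^{\ell})=(\ell^2+\ell)^{m/2}I_{d_\ell}$; on the other hand the symbol of $iX_3=\partial_0$ at $\xi^{\ell}$ is diagonalizable, with diagonal entries the weights $j_1$ of $\xi^{\ell}$, $-\ell\le j_1\le\ell$. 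Choosing the representative $\xi^{\ell}\in[\xi^{\ell}]$ in which $\sigma_{iX_3}(\xi^{\ell})$ is diagonal and using that the symbol of $a(x,t)\mathcal{L}_{\textnormal{SU}(2)}^{m/2}$ is a scalar multiple of the identity, both summands are simultaneously diagonal and we obtain \eqref{positivity:SU2}, i.e.
\begin{equation*}
\sigma_{-K(t)}(x,\xi^{\ell})=\textnormal{diag}\big[-a(x,t)(\ell^2+\ell)^{\frac m2}-a_3(x,t)\,j_1\big]_{-\ell\le j_1\le\ell}.
\end{equation*}
Positivity is then checked entrywise. If $0\le m<1$ and $a_3\equiv 0$, each entry is $-a(x,t)(\ell^2+\ell)^{m/2}\ge 0$ because $a\le 0$, and this case is essentially forced since $(\ell^2+\ell)^{m/2}/\ell\to 0$ and a nonzero $a_3$ would violate positivity for large $\ell$. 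If $m=1$, bounding $|a_3(x,t)j_1|\le|a_3(x,t)|\ell$ gives, for $\ell\ge 1/2$,
\begin{equation*}
-a(x,t)(\ell^2+\ell)^{1/2}-a_3(x,t)j_1\;\ge\;\ell\Big(-a(x,t)\big(1+\tfrac1\ell\big)^{1/2}-|a_3(x,t)|\Big)\;\ge\;\ell\big(-a(x,t)-|a_3(x,t)|\big)\;\ge\;0,
\end{equation*}
using $(1+1/\ell)^{1/2}\ge 1$ and $|a_3(x,t)|+a(x,t)\le 0$ (the $\ell=0$ entry being $0$); letting $\ell\to\infty$ shows the threshold $|a_3|+a\le 0$ is sharp. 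In either regime \eqref{Positivity:condition} holds.

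Having verified \eqref{Positivity:condition}, Corollary \ref{Corollary:general:drift} directly produces the unique solution $v\in C^1([0,T],L^2(\textnormal{SU}(2)))$ of \eqref{Driff:drift:SU2:th} together with the energy estimate \eqref{eq:energy:fracto:SU2}, for $u_0\in L^2(\textnormal{SU}(2))$ and $f\in L^2([0,T],L^2(\textnormal{SU}(2)))$. For the constant-coefficient claim, with $a\equiv c_1\le 0$ and $a_3(x,t)\equiv a_3$ satisfying $|a_3|+c_1\le 0$, the operator $K(t)=c_1\mathcal{L}_{\textnormal{SU}(2)}^{m/2}+a_3\,iX_3$ has constant coefficients and commutes with $\mathcal{L}_{\textnormal{SU}(2)}$, since the Casimir lies in the centre of $\mathfrak{U}(\mathfrak{su}(2))\oplus i\,\mathfrak{U}(\mathfrak{su}(2))$; therefore $\tilde{K}(t)=(1+\mathcal{L}_{\textnormal{SU}(2)})^{s/2}K(t)(1+\mathcal{L}_{\textnormal{SU}(2)})^{-s/2}=K(t)$, the positivity hypothesis on $\sigma_{-\tilde{K}(t)}$ is exactly the one already checked, and Corollary \ref{Main:Th:Sob:2} upgrades the well-posedness to $u_0\in H^s(\textnormal{SU}(2))$, $f\in L^2([0,T],H^s(\textnormal{SU}(2)))$ with the corresponding $H^s$ energy estimate. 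The main obstacle is concentrated in the symbol computation: identifying the spectrum of the symbol of $\partial_0$ on each $\xi^{\ell}$ as the weight set $\{j_1:-\ell\le j_1\le\ell\}$, justifying the simultaneous diagonalization with the scalar symbol of $a(x,t)\mathcal{L}_{\textnormal{SU}(2)}^{m/2}$, and then running the elementary but sharp case analysis pinning down the threshold $|a_3(x,t)|+a(x,t)\le 0$ at $m=1$ and the necessity of $a_3\equiv 0$ for $0\le m<1$; once this is in place the existence, uniqueness and energy estimates follow immediately from Corollaries \ref{Corollary:general:drift} and \ref{Main:Th:Sob:2}.
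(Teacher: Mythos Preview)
Your proposal is correct and follows essentially the same route as the paper: you compute the diagonal symbol $\sigma_{-K(t)}(x,\xi^{\ell})$ exactly as in \eqref{positivity:SU2}, verify the positivity condition \eqref{Positivity:condition} by the same case analysis (forcing $a_3\equiv 0$ when $0\le m<1$ and the threshold $|a_3|+a\le 0$ when $m=1$), and then invoke Corollary~\ref{Corollary:general:drift} for the $L^2$ statement and the commutation of constant-coefficient $K(t)$ with the Casimir together with Corollary~\ref{Main:Th:Sob:2} for the $H^s$ upgrade. Your write-up is in fact more explicit than the paper's discussion preceding the corollary, in particular in justifying the simultaneous diagonalization and the sharpness of the threshold via $\ell\to\infty$.
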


\bibliographystyle{amsplain}

\end{document}